\newtheorem{theorem}{Theorem}%[section]
\newtheorem{lemma}{Lemma}
\newtheorem{corollary}{Corollary}
\newtheorem{proposition}{Proposition}
\newtheorem{definition}{Definition}
\newtheorem{remark}{Remark}
\theoremstyle{remark}
\def\R{\mathbb{R}}
\def\P{\mathbb{P}}
\def\E{\mathbb{E}}
\def\K{\mathcal{K}}
\def\G{\mathcal{G}}
\def\H{\mathcal{H}}
\def\A{\mathcal{A}}
\def\I{\mathcal{I}}
\renewcommand{\phi}{\varphi}
\renewcommand{\epsilon}{\varepsilon}
\definecolor{mygray}{gray}{0.9}
\definecolor{deeppink}{RGB}{255,20,147}
\definecolor{mygreen}{rgb}{0.05, 0.576, 0.03}
\definecolor{myred}{rgb}{0.768, 0.09, 0.09}
\long\def\symbolfootnote[#1]#2{\begingroup
\def\thefootnote{\fnsymbol{footnote}}\footnote[#1]{#2}\endgroup}
\newcommand{\keywords}[1]{ \noindent {\footnotesize
             {\small \em Keywords and phrases.} {\sc #1} } }
\def\EE{\mathscr{E}}
\begin{document}

\title{Kingman's model with 
 random mutation probabilities: \\
convergence and condensation II}
\author{Linglong Yuan}
\affil[]{Department of Mathematical Sciences\\
University of Liverpool}

\affil[]{Department of Mathematical Sciences\\
Xi'an Jiaotong-Liverpool University}
%\email{yuanlinglongcn@gmail.com}
%\affil[]{Department of Mathematics\\ Johannes-Gutenberg-Universit\"at \ln}
%\email{yuanlinglongcn@gmail.com}
%\affil[]{Department of Mathematics\\ Uppsala University}
%\date{October 2014}
\date{\today}

\maketitle
\tableofcontents

\abstract{%Kingman's model describes the
%evolution of a one-locus haploid population of infinite size and discrete generations under the competition of selection and mutation. 
A generalisation of Kingman's model of selection and mutation has been made in a previous paper which assumes all mutation probabilities to be i.i.d.. The weak convergence of fitness distributions to a globally stable equilibrium for any initial distribution was proved. The condensation occurs if almost surely a positive proportion of the population travels to and condensates on the largest fitness value due to the dominance of selection over mutation. A criterion of condensation was given which relies on the equilibrium whose explicit expression is however unknown. This paper tackles these problems based on the discovery of a matrix representation of the random model. An explicit expression of the equilibrium is obtained and the key quantity in the condensation criterion can be estimated. Moreover we examine how the design of randomness in Kingman's model affects the fitness level of the equilibrium by comparisons between different models. The discovered facts are conjectured to hold in other more sophisticated models.

\symbolfootnote[0]{$\!\!\!$\, Address: Department of Mathematical Sciences, The University of Liverpool, Liverpool L69 7ZL, UK.

\quad Email: yuanlinglongcn@gmail.com

\quad This paper is the second half of the article submitted to Arxiv: https://arxiv.org/abs/1903.10993}
\keywords{Population dynamics, Mutation-Selection balance,  House of Cards,  Fitness distribution, Size-biased distribution, Bose-Einstein condensation,  Random matrices\\
\textit{MSC Subject classification:} 	60F05, 15B52 (primary),  60G10, 60G57, 92D15, 92D25 (secondary)
}

}

\section{Motivation}

The evolution of a population involves various forces. Kingman \cite{K78} considered the equilibrium of a population as existing because of a balance between two factors, other phenomena causing only perturbations. The pair of factors he chose was mutation and selection. The most famous model for the evolution of one-locus haploid population of infinite size and discrete generations, proposed by Kingman \cite{K78}, is  as follows: 

Let the fitness value of any individual take values in $[0,1].$ Higher fitness values represent higher productivities. Let $(P_n)=(P_n)_{n\geq 0}$ be a sequence of probability measures on $[0,1],$ and denote the fitness distribution of the population at generation $n$. Let $b\in[0,1)$ be a mutation probability. Let $Q$ be a probability measure on $[0,1]$ serving as mutant fitness distribution. Then $(P_n)$ is constructed by the following iteration:

\begin{equation}\label{kpi} P_{n}(dx)=(1-b)\frac{x P_{n-1}(dx)}{\int y P_{n-1}(dy)}+bQ(dx), \quad n\geq1.\end{equation}
Biologically it says that a proportion $b$ of the population are mutated with fitness values sampled from $Q$ and the rest will undergo the selection via a size-biased transformation.  Kingman used the term ``House of Cards'' for the fact that the fitness value of a mutant is independent of that before mutation, as the mutation destroys the biochemical ``house of cards'' built up by evolution. 

House-of-Cards models, which includes Kingman's model, belong to a larger class of models on the balance of mutation and selection. Variations and generalisations of Kingman's model have been proposed and studied for different biological purposes, see for instance B\"urger \cite{B86,B89, B98,B00}, Steinsaltz {\it et al} \cite{SEW05}, Evans {\it et al} \cite{SEW13} and Yuan \cite{Y15}. We refer to  \cite{Y20} for a more detailed literature review. 

But to my best knowledge, no random generalisation has been developed except in my previous paper \cite{Y20}, in which we assume that the mutation probabilities form an i.i.d.\ sequence. The randomness of the mutation probabilities reflects the influence of a stable random environment on the mutation mechanism. The fitness distributions have been shown to converge weakly to a globally stable equilibrium distribution for any initial fitness distribution. When selection is more favoured than mutation, a condensation may occur, which means that almost surely a positive proportion of the population travels to and condensates on the largest fitness value. We have obtained a criterion of condensation  which relies on the equilibrium whose explicit expression is however unknown. So we do not know how the equilibrium looks like and whether condensation occurs or not in concrete cases. 

As a continuation for \cite{Y20}, this paper aims to solve the above problems based on the discovery of a matrix representation of the random model which yields an explicit expression for the equilibrium.  The matrix representation also allows to examine the effects of different designs of randomness by comparing the moments and condensation sizes of the equilibriums in several models.

\section{Models}
This section is mainly a summarisation of Section 2 in \cite{Y20}, in addition to the  introduction of a new random model where all mutation probabilities are equal but random. 
\subsection{Two deterministic models}
Let $M_1$ be the space of probability measures on $[0,1]$ endowed with the topology of weak convergence. Let $(b_n)=(b_n)_{n\geq 1}$ be a sequence of numbers in $[0,1)$, and $P_0, Q\in M_1$. {\it Kingman's model with time-varying mutation probabilities} or simply {\it the general model} has parameters $(b_n), Q, P_0$. 
In this model, $(P_n)=(P_n)_{n\geq 0}$ is a (forward) sequence of probability measures in $M_1$
generated by 

\begin{equation}\label{pi} P_{n}(dx)=(1-b_{n})\frac{x P_{n-1}(dx)}{\int y P_{n-1}(dy)}+b_{n}Q(dx), \quad n\geq1,\end{equation}
where $\int$ denotes $\int_0^1 .$ We introduce a function $S:M_1\mapsto [0,1]$ such that 
$$S_u:=\sup\{x:u([x,1])>0\},\quad \forall u\in M_1.$$
Then $S_u$ is interpreted as the {\it largest fitness value} of a population of distribution $u$. Let $h:=S_{P_0}$ and assume that $h\geq S_Q.$ This assumption is natural because in any case we have 
$S_{P_1}\geq S_Q.$

We are interested in the convergence of $(P_n)$ to a possible equilibrium, which is however not guaranteed without putting appropriate conditions on $(b_n)$. To avoid triviality, we do not consider $Q=\delta_0$, the dirac measure on $0$.

 {\it Kingman's model }is simply the model when $b_n=b$ for any $n$ with the parameter $b\in[0,1)$. We say a sequence of
probability measures $(u_n)$ converges in total variation to $u$ if the total variation $\|u_n-u\|$ converges to zero.  It was shown by Kingman \cite{K78} that $(P_n)$ converges to a probability measure, that we denote by $\K$, which depends only on $b,Q$ and $h$ but not on $P_0.$ 

\begin{theorem}[Kingman's theorem,\cite{K78}]\label{King}
If  $\int \frac{Q(dx)}{1-x/h}\geq b^{-1},$ then $(P_n)$ converges in total variation  to 
$$\K(dx)=\frac{b \theta_bQ(dx)}{\theta_b-(1-b)x},$$
where $\theta_b$, as a function of $b$, is the unique solution of 
\begin{equation}\label{sb}\int\frac{b \theta_bQ(dx)}{\theta_b-(1-b)x}=1.\end{equation}
If $\int \frac{Q(dx)}{1-x/h}< b^{-1}$, then $(P_n)$ converges weakly to 
$$\K(dx)=\frac{b Q(dx)}{1-x/h}+\Big(1-\int\frac{b Q(dy)}{1-y/h}\Big)\delta_{h}(dx).$$ 
%here $\delta_h(dx)$ is the Dirac measure at $h.$ 
\end{theorem}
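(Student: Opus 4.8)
The plan is to exploit the monotone structure of the iteration \eqref{pi} with $b_n\equiv b$ by tracking a single scalar statistic, namely the normalising constant $\theta_n:=\int y\,P_{n-1}(dy)$ (the mean fitness at the previous step), and to set up an explicit functional form for $P_n$ in terms of $P_0$, $b$, $Q$ and the running products of the $\theta_k$'s. First I would unfold the recursion: since each step multiplies the density by $x$ and divides by the mean, then mixes with $bQ$, an induction shows that $P_n$ is a weighted combination of the pushforwards $x\mapsto x^j$ against $Q$ (for $j=0,1,\dots,n-1$) plus a term carrying the ``memory'' of $P_0$ of the form $c_n\, x^{n} P_0(dx)/(\text{product of means})$. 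The coefficient of the $P_0$-term is $\prod_{k}(1-b)$-type and, crucially, the $P_0$-term is supported up to $h=S_{P_0}$ while all the $Q$-terms are supported up to $S_Q\le h$.

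Next I would identify the candidate limit. Summing the geometric-type series in the unfolded expression and passing $n\to\infty$, the natural guess is that $P_n(dx)\to c\,\frac{bQ(dx)}{1-x/\vartheta}$ for an appropriate constant $\vartheta=\lim_n \theta_n$, provided the series converges; this is exactly the first alternative $\K(dx)=\frac{b\theta_b Q(dx)}{\theta_b-(1-b)x}$ once one checks the normalisation \eqref{sb} has a solution $\theta_b$. Define $g(\theta):=\int \frac{b\theta Q(dx)}{\theta-(1-b)x}$ for $\theta\in(1-b,1]$ (on $[0,h]$-supported $Q$ one needs $\theta> (1-b)S_Q$, but $\theta\ge (1-b)h$ will do); $g$ is continuous and strictly decreasing in $\theta$ with $g(1)=\int\frac{bQ(dx)}{1-(1-b)x}$ and $g(\theta)\to\infty$ as $\theta\downarrow(1-b)h$ is replaced by the boundary value $g\big((1-b)h\big)=\int\frac{(1-b)h\,bQ(dx)}{(1-b)(h-x)}=b\int\frac{Q(dx)}{1-x/h}$. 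Hence the equation $g(\theta)=1$ has a (unique) solution $\theta_b\in((1-b)h,1]$ exactly when $b\int\frac{Q(dx)}{1-x/h}\ge 1$, i.e.\ $\int\frac{Q(dx)}{1-x/h}\ge b^{-1}$ — this is precisely the dichotomy in the statement. In the complementary (supercritical-selection) regime the would-be $\theta_b$ would have to drop below $(1-b)h$, which is impossible; the mean $\theta_n$ then decreases to the boundary value $(1-b)h$, mass ``escapes'' to the top fitness $h$, and the limit acquires an atom at $h$ of the stated size $1-\int\frac{bQ(dy)}{1-y/h}$, with the absolutely continuous part $\frac{bQ(dx)}{1-x/h}$.

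For convergence itself I would argue in two steps. (i) Show $\theta_n$ is monotone and hence convergent: from \eqref{pi}, $\theta_{n+1}=\int x\,P_n(dx)$ can be written as $(1-b)\frac{\int x^2 P_{n-1}(dx)}{\int x P_{n-1}(dx)}+b\int x\,Q(dx)$, and a Cauchy–Schwarz / variance argument plus the explicit unfolded form pins down monotonicity and identifies the limit $\vartheta$ with $\theta_b$ (subcritical case) or with $(1-b)h$ (supercritical case). (ii) Feed $\theta_n\to\vartheta$ back into the unfolded expression for $P_n$; in the subcritical case the $P_0$-memory term has coefficient $\to 0$ and the $Q$-series converges in total variation to $\K$ because $\sup_x x/\vartheta<1$ there, giving total-variation convergence; in the supercritical case $x/\vartheta$ can equal $1$ at $x=h$, the series converges only in the weak sense, the leftover mass concentrates at $h$, and one gets weak (not total-variation) convergence to the atomic $\K$. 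The main obstacle I anticipate is the rigorous control of the memory term and of the interchange of limit and infinite sum at the critical edge $x=h$ in the supercritical regime: one must show the escaping mass converges to a genuine point mass at $h$ rather than smearing, which is where the choice of weak topology (and the hypothesis $h\ge S_Q$, so that $Q$ contributes no mass above the escaping front) is essential. Everything else is bookkeeping on the recursion and elementary monotonicity of the scalar function $g$.
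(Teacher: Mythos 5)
The paper does not prove this theorem at all; it is quoted verbatim from Kingman's 1978 paper \cite{K78}, so there is no internal proof to compare against. Your proposal must therefore stand on its own, and while the skeleton (unfold the recursion, track the mean, analyse the scalar equation for $\theta_b$) is a reasonable route, the key convergence step as you wrote it contains a concrete error.

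Your step~(i) asserts that $\theta_n=\int y\,P_{n-1}(dy)$ is monotone. This is false in general. Take $Q=\delta_{1/4}$, $b=1/2$, $P_0=\tfrac12\delta_1+\tfrac12\delta_0$, so $h=1$. Writing $w_n=\int y\,P_n(dy)$ one finds $w_0=\tfrac12$, $w_1=\tfrac58$, $w_2=\tfrac{11}{20}$, $w_3=\tfrac{23}{44}$, tending to the limiting mean $(1-b)h=\tfrac12$ from above after first rising from $\tfrac12$: the sequence overshoots and then relaxes, so it is not monotone. The Cauchy--Schwarz inequality you invoke only gives $w_n\ge (1-b)w_{n-1}+b m_1$, which is a one-sided bound and not monotonicity; there is no elementary variance argument that repairs this for arbitrary $P_0$. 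Convergence of $(w_n)$ is genuinely the hard part of Kingman's theorem. What does work cleanly is a \emph{backward} iteration: set $P_n^n=\delta_h$ (or $P_n^n=Q$, as this paper does), iterate relation~\eqref{revp} downward, and observe that the resulting convex coefficients in front of $\{\delta_h,Q,Q^1,Q^2,\dots\}$ (equivalently, the quantities $R^n_{j,k}$ and $\gamma_jL_{j,n}$ of Lemma~\ref{rl}) are genuinely monotone in $n$ by H\"older's inequality, which is exactly the monotone structure you were hoping $(\theta_n)$ would provide. The remainder of your proposal — the unfolding of the forward iteration, the identification of the candidate limit, and the dichotomy analysis of $g(\theta)=\int \frac{b\theta\,Q(dx)}{\theta-(1-b)x}$ showing that $g(\theta_b)=1$ has a solution with $\theta_b\ge(1-b)h$ if and only if $\int\frac{Q(dx)}{1-x/h}\ge b^{-1}$ — is correct, and you correctly flag the atom-formation at $x=h$ in the supercritical case as the point where only weak (not total-variation) convergence is available. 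But as written the proof does not close, because step~(i) fails.
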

We say {\it there is a condensation on $h$ in Kingman's model} if $Q(h)=Q(\{h\})=0$ but $\K(h)>0$, which corresponds to the second case above. We call $\K(h)$ {\it the condensate size on $h$ in Kingman's model} if $Q(h)=0$. The terminology is due to the fact that if we let additionally $P_0(h)=0,$ then any $P_n$ has no mass on the extreme point $h$; however asymptotically a certain amount of mass $\K(h)$ will travel to and condensate on $h$. 

\subsection{Two random models}
We recall the notation of weak convergence for random probability measures. Let $(\mu_n)$ be random probability measures supported on $[0,1]$. The sequence converges weakly to a limit $\mu$ if and only if for any continuous function $f$ on $[0,1]$ we have 
$$\int f(x)\mu_n(dx)\stackrel{d}{\longrightarrow}\int f(x)\mu(dx).$$

Next we introduce two random models which generalise Kingman's model.  Let $\beta\in[0,1)$ be a random variable. Let $(\beta_n)$ be a sequence of i.i.d.\ random variables sampled from the distribution of $\beta.$   If $b_n=\beta_n$ for any $n$ we call it {\it Kingman's model with random mutation probabilities} or simply {\it the first random model}. It has been proved in \cite{Y20} that $(P_n)$ converges weakly to a globally stable equilibrium, that we denote by $\I$ whose distribution depends on $\beta, Q, h$ but not on $P_0$.

For comparison we introduce another random model. If $b_n=\beta$ for any $n$, we call it {\it Kingman's model with the same random mutation probability} or {\it the second random model}. Conditionally on the value of $\beta$, it becomes Kingman's model. So we can think of this model as a compound version of Kingman's model, with $b$ replaced by $\beta.$ We denote the limit of $(P_n)$ by $\A$ which is a compound version of $\K$.

In this paper, we continue to study the equilibrium and the condensation phenomenon in the first random model. 
By Corollary 4 in \cite{Y20}, if $Q(h)=0$, then $\I(h)>0$ a.s. or $\I(h)=0$ a.s.. We say {\it there is a condensation on $h$ in the first random  model} if $Q(h)=0$ but $\I(h)>0$ a.s.. We call $\I(h)$ {\it the condensate size on $h$} if $Q(h)=0$. A condensation criterion, which relies on a function of $\beta$ and $\I$, was established in \cite{Y20}. As the equilibrium has no explicit expression, the condensation criterion cannot be used in concrete cases. This paper aims to solve these problems based on a matrix representation of the general model which can be inherited to the first random model. The objectives include an explicit expression of $\I$, and finer properties of $\I$ on the moments and condensation. The comparison of Kingman's model and the two random models will be performed and to this purpose we assume additionally that 
$$\E[\beta_n]=\E[\beta]=b\in (0,1),\quad \forall \,n\geq 1\,\,.$$
The case with $b=0$ is excluded for triviality.

\section{Notations and results}
\subsection{Preliminary results}\label{pre}
In  this section, we again recall some necessary results from \cite{Y20}. 
We introduce 
$$Q^k(dx):=\frac{ x^kQ(dx)}{\int y^kQ(dy)},\quad m_k:=\int x^kQ(dx),  \quad \forall\, k\geq 0.$$

We introduce the notion of {\it invariant measure}. A random measure $\nu\in M_1$ is {\it invariant}, if it satisfies 
$$\nu(dx)\stackrel{d}{=}(1-\beta)\frac{x \nu(dx)}{\int_0^1 y \nu(dy)}+\beta Q(dx)$$
with $\beta$ independent of $\nu$. Note that $\I$, the limit of $(P_n)$ in the first random model, is an invariant measure. 

In the general model a forward sequence $(P_n)$ does not necessarily converge.
But the convergence may hold if we investigate the model in a backward way. A finite backward sequence $( P_j^n)=( P_j^n)_{0\leq j\leq n}$ has parameters $n, (b_j)_{1\leq j\leq n},Q, P_n^n, h$ with $h=S_{P_n^n}$ and satisfies 
\begin{equation}\label{revp} P_j^n(dx)=(1-b_{j+1})\frac{x P_{j+1}^n(dx)}{\int y P_{j+1}^n(dy)}+b_{j+1}Q(dx), \quad 0\leq j\leq n-1.\end{equation}
Consider a particular case with $P_n^n=\delta_h$. Then $P_j^n$ converges in total variation to a limit, denoted by $\G_j=\G_{j,h}$ (and $\G=\G_0, \G_Q=\G_{0,S_Q}$), as $n$ goes to infinity with $j$ fixed, such that 
\begin{equation}\label{gf} \G_{j-1}(dx)=(1-b_{j})\frac{x \G_j(dx)}{\int y \G_j(dy)}+b_{j}Q(dx), \quad j\geq 1\end{equation}
where $\G:[0,1)^\infty\to M_1$ is a measurable function, with $\G_j=\G(b_{j+1},b_{j+2,\cdots})$ which is supported on $[0,S_Q]\cup \{h\}$ for any $j$. Moreover, $(\ref{gf})$ can be further developed
\begin{equation}\label{bsfas}\G_0(dx){=}G_0\delta_h(dx)+\sum_{j=0}^{\infty}\prod_{l=1}^{j}\frac{(1-b_l)}{\int y\G_l(dy)}b_{j+1}m_jQ^j(dx). \end{equation}
where $G_0=G_{0,h}=1-\sum_{j=0}^{\infty}\prod_{l=1}^{j}\frac{(1-b_l)}{\int y\G_l(dy)}b_{j+1}m_j.$
Then $\G_0$ can be considered as a convex combination of probability measures $\{\delta_h, Q,Q^1,Q^2,\cdots\}$. We introduce also $G_j=G_{j,h}$ for $\G_{j,h}$ for any $j$
and $G=G_0, G_Q=G_{0,S_Q}.$

The above results hold regardless of the values of $(b_n)$. So they hold also in the other three models. In particular, we replace the symbol $\G, G$ by $\I, I$ in the first random model, by $\A, A$ in the second random model and by $\K, K$ in Kingman's model.%, similarly for all the results from the general model in this paper. 

For the first random model, $(\I_j)$ is stationary ergodic and $\I$ is the weak limit of $(P_n)$. Moreover $\E\left[\ln \frac{(1-\beta)}{\int y\I_Q(dy)}\right]\in [-\infty,-\ln\int yQ(dy)]$ is well defined, whose value does not depend on the joint law of $(\beta, \I)$. This term is the key quantity in the condensation criterion. Note that we neither have an explicit expression of $\I_Q$ nor an estimation of $\E\left[\ln \frac{(1-\beta)}{\int y\I_Q(dy)}\right].$%to tell whether $I_{0,h}$ equals zero or not: 
\begin{theorem}[Condensation criterion, Theorem 3 in \cite{Y20}]\label{conY20}
\begin{enumerate}
\item If $h=S_Q$, then there is no condensation on $S_Q$ if 
\begin{equation}\label{uq}\E\left[\ln \frac{S_Q(1-\beta)}{\int y\I_Q(dy)}\right]<0.\end{equation}
\item If $h>S_Q$, then there is no condensation on $h$ if and only if 
\begin{equation}\label{h}\E\left[\ln \frac{h(1-\beta)}{\int y\I_Q(dy)}\right]\leq 0.\end{equation}
\end{enumerate}
\end{theorem}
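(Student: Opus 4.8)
The plan is to read the condensate size $I=\I(h)$ off the backward recursion, to show that condensation forces a stationary ergodic ``growth rate'' attached to $h$ to be exactly critical, and separately to show that a supercritical rate forces mass onto $h$; a comparison of the mean fitness of $\I$ and of $\I_Q$ then turns these facts into \eqref{h} and \eqref{uq}. First, by the definition of condensation together with the dichotomy of Corollary~4 in \cite{Y20}, there is no condensation on $h$ iff $I=0$ a.s. Writing $I_j:=\I_{j,h}(\{h\})$ (so $I=I_0$) and using $Q(\{h\})=0$, the backward recursion \eqref{gf} evaluated at the point $h$ reads $I_{j-1}=\frac{h(1-\beta_j)}{\int y\,\I_{j,h}(dy)}\,I_j$, whence
$$I=I_m\prod_{j=1}^{m}\frac{h(1-\beta_j)}{\int y\,\I_{j,h}(dy)},\qquad\forall\,m\ge1 .$$
The increments $X_j:=\ln\frac{h(1-\beta_j)}{\int y\,\I_{j,h}(dy)}$ form a stationary ergodic sequence, being a fixed measurable function of $(\beta_j,\beta_{j+1},\dots)$ just as $(\I_{j,h})$ is. If there is condensation, then $I>0$ and, by stationarity, $I_m\stackrel{d}{=}I>0$ a.s.\ for every $m$. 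Taking logarithms in the displayed identity and dividing by $m$ yields $\frac1m\sum_{j=1}^{m}X_j=\frac1m\ln I-\frac1m\ln I_m$; the left side converges a.s.\ to $\E[X_1]$ by Birkhoff's ergodic theorem, while the right side tends to $0$ in probability (since $\frac1m\ln I\to0$ a.s.\ and $\frac1m\ln I_m\to0$ in probability, as $I_m\stackrel{d}{=}I$). Hence condensation forces $\E[X_1]=\E\!\left[\ln\frac{h(1-\beta)}{\int y\,\I(dy)}\right]=0$.

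The second ingredient is the comparison
$$\E\!\left[\ln\int y\,\I(dy)\right]\ge\E\!\left[\ln\int y\,\I_Q(dy)\right],$$
strict whenever $h>S_Q$ and condensation occurs (the atom at $h$ then raising the mean fitness strictly above that of the mutant-supported chain $\I_Q$). Granting this, the previous step shows that condensation forces $0\le\E[\ln\frac{h(1-\beta)}{\int y\,\I_Q(dy)}]$, strictly so when $h>S_Q$; equivalently, condensation is impossible under \eqref{uq}, and impossible under \eqref{h} when $h>S_Q$. This gives the whole of \eqref{uq} and the ``only if'' half of \eqref{h}.

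For the remaining half of \eqref{h} I would argue dynamically that a supercritical rate forces mass onto $h$. Fix $0<\epsilon<h-S_Q$, so that $(h-\epsilon,h]$ receives no mutant mass. In the finite backward chain $(P_j^n)_{0\le j\le n}$ with $P_n^n=\delta_h$, the mass $w_j:=P_j^n((h-\epsilon,h])$ satisfies $w_{j-1}\ge\frac{(1-\beta_j)(h-\epsilon)}{\int y\,P_j^n(dy)}\,w_j$, so $w_0\ge\prod_{j=1}^{n}\frac{(1-\beta_j)(h-\epsilon)}{\int y\,P_j^n(dy)}$. Were there no condensation, then $\I_{j,h}=\I_{j,S_Q}$ and $\int y\,P_j^n(dy)$ is close to $\int y\,\I_{j,S_Q}(dy)$ in the bulk, so the product has log-drift $\E[\ln\frac{h(1-\beta)}{\int y\,\I_Q(dy)}]+\ln(1-\epsilon/h)$, which is positive for $\epsilon$ small once \eqref{h} fails; since $w_0\le1$, this is a contradiction, so $\I(h)>0$ a.s. (the band $(S_Q,h)$ being mutant-free, the escaping mass $w_0$ can only accumulate at $h$). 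Together with the comparison step this closes the ``if and only if'' of \eqref{h}. When $h=S_Q$ this mechanism has no room to run and, moreover, the comparison above is only non-strict; this is precisely why for $h=S_Q$ one obtains the one-directional statement \eqref{uq} with a strict inequality, the borderline case $\E[\ln\frac{S_Q(1-\beta)}{\int y\,\I_Q(dy)}]=0$ being left undecided.

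I expect the hard part to be twofold. The comparison used in the second step cannot come from a naive monotonicity of the backward map $u\mapsto(1-b)\frac{x\,u}{\int y\,u(dy)}+bQ$: size-biasing preserves neither the usual stochastic order nor the ordering of means, so one has to exploit the specific structure of the equilibria $\I$ and $\I_Q$ --- through the series representation \eqref{bsfas}, or a tailored coupling of the two backward chains --- and it is precisely the \emph{strict} form (for $h>S_Q$) that closes the critical case $\E[\ln\frac{h(1-\beta)}{\int y\,\I_Q(dy)}]=0$ and whose failure at $h=S_Q$ forces the strict inequality in \eqref{uq}. Secondly, the bulk/boundary split used in the dynamical step (and implicitly in the first step) must be made uniform in $n$: for $j$ close to $n$ the ratio $\frac{h}{\int y\,P_j^n(dy)}$ is of order $1$ rather than $\frac{h}{\int y\,\I_{j,h}(dy)}$, so one needs a quantitative ``forgetting'' estimate for the backward chain ensuring that the accumulated boundary correction stays tight on the exponential scale.
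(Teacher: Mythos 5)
This theorem is quoted verbatim from \cite{Y20} (it is Theorem~3 there) and is only \emph{recalled} in Section~3.1 of the present paper, not reproved, so there is no internal proof here to compare against; I therefore assess your proposal on its own merits.

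Your ergodic step is sound as far as it goes: iterating the atom recursion $I_{j-1}=\frac{h(1-\beta_j)}{\int y\,\I_{j,h}(dy)}\,I_j$, the stationarity $I_m\stackrel{d}{=}I$, and Birkhoff's theorem do force $\E\bigl[\ln\frac{h(1-\beta)}{\int y\,\I_{h}(dy)}\bigr]=0$ whenever condensation holds. But this identity involves the \emph{condensed} equilibrium $\I_{h}=\I_{0,h}$, whereas the theorem is stated with $\I_Q=\I_{0,S_Q}$. You bridge the two by asserting the log-mean comparison $\E[\ln\int y\,\I_{h}(dy)]\ge\E[\ln\int y\,\I_Q(dy)]$, strict when $h>S_Q$ and $\I_h(\{h\})>0$, and you do not prove it. This is the load-bearing step and it is genuinely nontrivial: since $\int y\,\I_{j,h}(dy)$ exceeds $\int y\,\I_{j,Q}(dy)$, the size-biasing factor applied to the non-atomic part of $\I_{j,h}$ is \emph{weaker} than that applied to $\I_{j,Q}$, so the bulk of $\I_h$ is typically flatter than $\I_Q$; whether the extra mass $h\cdot\I_h(\{h\})$ always outweighs this loss in the bulk is exactly the content of the comparison, and flagging it as ``the hard part'' does not discharge it. Likewise, the dynamical converse rests on an unproved uniform-in-$n$ control of the boundary factors in $\prod_j\frac{h(1-\beta_j)}{\int y\,P_j^n(dy)}$ (the forgetting estimate you mention), without which the lower bound on $w_0$ does not go through. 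So the proposal is a plausible skeleton, but with two substantive unproved inputs it is not yet a proof of the statement. A minor slip: you have swapped the labels ``if'' and ``only if'' in part~2 --- the ergodic/comparison argument proves ``$\E\le 0\Rightarrow$ no condensation'' (the \emph{if} half), and the dynamical argument is what proves ``no condensation $\Rightarrow\E\le0$'' (the \emph{only if} half).
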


%Note that the condition (\ref{uq}) implies $Q(S_Q)=0.$ So it is legitimate to study condensation. 
%(\ref{uq}) and (\ref{h}) serve as condensation criterion in respective cases.

\subsection{Notations on matrices}
The most important tool in this paper is the matrix representation in the general model. We need to firstly introduce some notations and functions related to matrix. 
One can skip this part at first reading. 

{\bf \noindent 1).} Define
$$\gamma_j=\frac{1-b_j}{b_j},\quad  \gamma=\frac{1-b}{b},\quad \Gamma_j=\frac{1-\beta_j}{\beta_j},\quad \Gamma=\frac{1-\beta}{\beta}$$
where the 4 terms all belong to $(0,\infty].$
For any $\, 1\leq j\leq n\leq \infty$ (except $j=n=\infty$), define 
\begin{equation}\label{ajnx}W_x^{j,n}:=\left(\begin{array}{ccccc}x&x^2&x^3&\cdots&x^{n-j+2}\\
-\gamma_j&m_1&m_2&\cdots&m_{n-j+1}\\
0&-\gamma_{j+1}&m_1&\cdots&\vdots\\
0&0&\ddots&\ddots&\vdots\\
0&0&\cdots&-\gamma_{n}&m_1\end{array}\right)%(W^{j,n}_1,W^{j,n}_2,\cdots,W^{j,n}_{n-j+2})
,\end{equation}
and 
\begin{equation}\label{ajn}W^{j,n}:=\int W_x^{j,n}Q(dx)=\left(\begin{array}{ccccc}m_1&m_2&m_3&\cdots&m_{n-j+2}\\
-\gamma_j&m_1&m_2&\cdots&m_{n-j+1}\\
0&-\gamma_{j+1}&m_1&\cdots&\vdots\\
0&0&\ddots&\ddots&\vdots\\
0&0&\cdots&-\gamma_{n}&m_1\end{array}\right)%(W^{j,n}_1,W^{j,n}_2,\cdots,W^{j,n}_{n-j+2})
.\end{equation}
Introduce $$W_x^n=W_x^{1,n};\,\,\,\, \,\,W_x=W_x^{1,\infty};\,\,\,\,\, \,W_x^{n+1,n}=(x);\,\, \,\,\,\,W_x^{m,n}=(1),\forall m>n+1$$ 
and $$W^n=W^{1,n}; \,\,\,\,\,\, W=W^{1,\infty}; \,\,\,\,\,\, W^{n+1,n}=(m_1); \,\,\,\,\,\, W^{m,n}=(1),\forall m>n+1.$$
%For $1\leq k\leq n$, let $B_x^{k,n}$ be the matrix of $W_x^n$ with the first row replaced by 
%$$(0,0,\cdots,0,x^{k+1},x^{k+2},\cdots, x^{n+1})$$
%and let $B^{k,n}=\intB_x^{k,n}Q(dx).$
{\bf \noindent 2).} For a matrix $M$ of size $m\times n$, let 
$r_i(M)$ be the $i$th row and $c_j(M)$ be the $j$th column, for $1\leq i\leq m, 1\leq j\leq n$. 
If the matrix is like
$$M=\left(\begin{array}{ccccc}m_{a_1}&m_{a_2}&\cdots & m_{a_{n-1}}&m_{a_n}\\
\cdot&\cdot&\cdots&\cdot&m_{a_{n+1}}\\
\vdots&\vdots&\ddots&\vdots&\vdots\\
\cdot&\cdot&\cdots&\cdot&m_{a_{n+m}}
\end{array}\right),$$ 
define, for any $k\geq 0$ 
$$U_k^rM:=\left(\begin{array}{ccccc}m_{k+a_1}&m_{k+a_2}&\cdots & m_{k+a_{n-1}}&m_{k+a_n}\\
\cdot&\cdot&\cdots&\cdot&m_{a_{n+1}}\\
\vdots&\vdots&\ddots&\vdots&\vdots\\
\cdot&\cdot&\cdots&\cdot&m_{a_{n+m}}
\end{array}\right).$$
Here $U_k^r$ increases the indices of the first row by $k$, with $r$ referring to ``row", and $U$ to ``upgrade". 
Similarly define 
$$U_k^cM:=\left(\begin{array}{ccccc}m_{a_1}&m_{a_2}&\cdots & m_{a_{n-1}}&m_{k+a_n}\\
\cdot&\cdot&\cdots&\cdot&m_{k+a_{n+1}}\\
\vdots&\vdots&\ddots&\vdots&\vdots\\
\cdot&\cdot&\cdots&\cdot&m_{k+a_{n+m}}
\end{array}\right)$$
which increases the indices of the last column by $k$, with $c$ referring to ``column".  In particular we write 
$$U^r=U_1^r, \quad U^c=U_1^c.$$

{\bf \noindent 3). }
Let $|\cdot|$ denote the determinant operator for square matrices. It is easy to see that, if none of $\gamma_j, \gamma_{j+1}, \cdots, \gamma_n$ is equal to infinity,  
$$|U_k^rW^{j,n}|>0,\,\, |U_k^cW^{j,n}|>0, \quad \forall\, k\geq 0, 1\leq j\leq n+1.$$
Define 
\begin{equation}\label{lr}L_{j,n}:=\frac{|W^{j+1,n}|}{|W^{j,n}|},\,\,\,\, R_{j,k}^n:=\frac{|U_k^rW^{j,n}|}{|W^{j,n}|},\,\,\,\,R_{j}^n:=R_{j,1}^n, \quad  \forall \,1\leq j\leq n,\, k\geq 1.\end{equation}
Specifically, let $L_{n+1,n}=\frac{1}{m_1}, R^n_{n+1,k}=\frac{m_{k+1}}{m_1}$. In the above definition, if one or some of $\gamma_j, \gamma_{j+1}, \cdots, \gamma_n$ are infinite, we consider $L_{j,n},R_{j,k}^n$ as obtained by letting the concerned variables go to infinity. As a convention, we will not mention again the issue of some $\gamma_j$'s being infinite, when the function can be defined at infinity by limit.  

Notice that expanding $W^{j,n}$ along the first column, we have 
\begin{equation}\label{laa}L_{j,n}=\frac{|W^{j+1,n}|}{|W^{j,n}|}=\frac{|W^{j+1,n}|}{m_1|W^{j+1,n} |+\gamma_j |U^rW^{j+1,n}|}=\frac{1}{m_1+\gamma_jR^n_{j+1}}.\end{equation}
If $\gamma_j=\infty$, let 
$$L_{j,n}=0,\,\, \gamma_jL_{j,n}=\frac{1}{R_{j+1}^n}. $$
%By Remark \ref{llimit}, $L_{i,n}$ decreases strictly to a limit $L_i$ as $n$ tends to infinity if $b_i\neq 0$. Otherwise $L_{i,n}=0$ for any $n\geq i.$

\begin{lemma}\label{rl}
In the general model, $R_{j,k}^n$ increases strictly in $n$ to a limit that we denote by $R_{j,k}$ (and $R_j=R_{j,1}$) which satisfies 
\begin{equation}\label{rik}
R_{j,k}=\frac{m_{k+1}+\gamma_iR_{j+1,k+1}}{m_1+\gamma_iR_{j+1}}.\end{equation}
And $\gamma_j L_{j,n}$ decreases strictly in $n$ to a limit that we denote by $\gamma_j L_{j}$ which satisfies

\begin{equation}  \label{rjlj}
\gamma_jL_{j}=\left\{  
             \begin{array}{lr}  
            \Large{1/{R_{j+1}}}, &  \mbox{if $\gamma_j=\infty$};\\  
             & \\  
              \gamma_j/({m_1+\gamma_jR_{j+1}}), &    \mbox{if $\gamma_j<\infty$}.
             \end{array}  
\right.  
\end{equation}  
Moreover 
\begin{equation}\label{boundrjlj}\frac{\gamma_j}{m_1+\gamma_j}< \gamma_j L_j< \frac{\gamma_j}{m_1(1+ \gamma_j)}, \quad m_1<R_{j+1}<1.\end{equation}
\end{lemma}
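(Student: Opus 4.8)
The plan is to read everything off from two cofactor expansions of the lower‑Hessenberg determinants $|W^{j,n}|$ and $|U_k^rW^{j,n}|$. First, expanding $W^{j,n}$ and $U_k^rW^{j,n}$ along their first columns $(m_1,-\gamma_j,0,\dots,0)^{\mathrm T}$ and $(m_{k+1},-\gamma_j,0,\dots,0)^{\mathrm T}$ gives
\[|W^{j,n}|=m_1|W^{j+1,n}|+\gamma_j|U^rW^{j+1,n}|,\qquad |U_k^rW^{j,n}|=m_{k+1}|W^{j+1,n}|+\gamma_j|U_{k+1}^rW^{j+1,n}|,\]
hence for $1\le j\le n$, $k\ge1$,
\[R_{j,k}^n=\frac{m_{k+1}+\gamma_jR_{j+1,k+1}^n}{m_1+\gamma_jR_{j+1}^n},\qquad \gamma_jL_{j,n}=\frac{\gamma_j}{m_1+\gamma_jR_{j+1}^n},\]
the second being $(\ref{laa})$ (values at $\gamma_j=\infty$ taken as limits). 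Passing $n\to\infty$ in these will yield $(\ref{rik})$ and $\gamma_jL_j=\gamma_j/(m_1+\gamma_jR_{j+1})$ as soon as the limits are known to exist, so the whole lemma reduces to \emph{monotone} convergence together with the bounds.

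Second, expanding the same determinants along their first rows and using multilinearity in the first row gives $|W^{j,n}|=\int_0^1|W_x^{j,n}|\,Q(dx)$ and $|U_k^rW^{j,n}|=\int_0^1x^k|W_x^{j,n}|\,Q(dx)$; and expanding $W_x^{j,n}$ along its first column gives $|W_x^{j,n}|=x\bigl(|W^{j+1,n}|+\gamma_j|W_x^{j+1,n}|\bigr)$. Since $|W_x^{n+1,n}|=x$, a downward induction on $j$ shows $|W_x^{j,n}|$ is a polynomial in $x$ with no constant term and strictly positive coefficients $c_p^{(n)}=\bigl(\prod_{i=j}^{p+j-2}\gamma_i\bigr)|W^{p+j,n}|$, $1\le p\le n-j+2$, in particular strictly increasing on $(0,1]$. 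Hence $\rho_j^n(dx):=|W_x^{j,n}|\,Q(dx)/|W^{j,n}|=\sum_p w_p^{(n)}Q^p$ with $w_p^{(n)}\propto c_p^{(n)}m_p$, and
\[R_{j,k}^n=\int_0^1x^k\,\rho_j^n(dx)=\sum_p w_p^{(n)}\frac{m_{k+p}}{m_p},\qquad \rho_j^n=\mathrm{SizeBias}\bigl(b_jQ+(1-b_j)\rho_{j+1}^n\bigr)\]
(the last identity because $1/(1+\gamma_j)=b_j$). It follows at once that $0<R_{j,k}^n\le R_{j,0}^n=1$ and $R_{j,k}^n$ decreases in $k$; and Chebyshev's correlation inequality under $Q$, applied to the two increasing functions $x$ and $|W_x^{j,n}|$, gives $m_1<R_{j,1}^n<1$ (throughout $Q$ is taken non‑degenerate, which is needed for the strict inequalities of $(\ref{boundrjlj})$ and standard).

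The crux is monotonicity in $n$. Fix $j$ and $k\ge1$. Log‑convexity of the moment sequence makes $r\mapsto m_{r+1}/m_r$ non‑decreasing, hence $p\mapsto m_{k+p}/m_p=\prod_{i=0}^{k-1}m_{p+i+1}/m_{p+i}$ non‑decreasing, i.e.\ $Q^p\preceq_{\mathrm{st}}Q^{p+1}$. Since a mixture of a stochastically increasing family against stochastically larger weights is stochastically larger, and larger in stochastic order means larger moments, it suffices to show that the laws $w^{(n)}$ on $\{1,2,\dots\}$ are \emph{stochastically increasing in $n$}, $w^{(n)}\preceq_{\mathrm{st}}w^{(n+1)}$ strictly. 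Because $w_p^{(n)}\propto\bigl(\prod_{i=j}^{p+j-2}\gamma_i\bigr)m_p|W^{p+j,n}|$, cancelling the common factors $\prod_i\gamma_i$ and $m_p$ turns each tail inequality $\sum_{p\ge t}w_p^{(n+1)}\ge\sum_{p\ge t}w_p^{(n)}$ into a polynomial inequality in the $\gamma_i$'s and $m_i$'s whose coefficients are, via Andr\'eief's identity applied to the representation above, Hankel minors $\det(m_{a_r+b_s})$ of $Q$'s moment sequence — and those are non‑negative (classical), strictly positive for non‑degenerate $Q$. Carrying out this reorganisation of $\sum_{p\ge t}(w_p^{(n+1)}-w_p^{(n)})$ into a manifestly non‑negative combination is the technical heart of the proof; equivalently, one proves the single determinant inequality $|U_k^rU^cW^{j,n}|\,|W^{j,n}|>|U_k^rW^{j,n}|\,|U^cW^{j,n}|$, which — after expanding $W^{j,n+1}$ along its last row and using the integral representation — is exactly $R_{j,k}^{n+1}>R_{j,k}^n$. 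Granting this, $R_{j,k}^n$ increases strictly in $n$, and $\gamma_jL_{j,n}=\gamma_j/(m_1+\gamma_jR_{j+1}^n)$ decreases strictly in $n$.

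Finally, being monotone and bounded, $R_{j,k}^n\uparrow R_{j,k}\in(0,1]$ and $\gamma_jL_{j,n}\downarrow\gamma_jL_j$; letting $n\to\infty$ in the identities of the first step yields $(\ref{rik})$ and $\gamma_jL_j=\gamma_j/(m_1+\gamma_jR_{j+1})$, the case $\gamma_j=\infty$ coming from $\gamma_jL_{j,n}=1/(m_1/\gamma_j+R_{j+1}^n)$; this is $(\ref{rjlj})$. For the bounds, $R_{j+1}^n>m_1$ for all $n$ gives $R_{j+1}>m_1$; and $R_{j+1}=1$ would force $\rho_{j+1}=\delta_1$ in the limiting identity $\rho_{j+1}=\mathrm{SizeBias}(b_{j+1}Q+(1-b_{j+1})\rho_{j+2})$, hence $\rho_{j+2}=\delta_1$, and so on down the sequence, contradicting $Q\notin\{\delta_0,\delta_1\}$; thus $m_1<R_{j+1}<1$. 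Substituting into $\gamma_jL_j=\gamma_j/(m_1+\gamma_jR_{j+1})$ gives $\gamma_j/(m_1+\gamma_j)<\gamma_jL_j<\gamma_j/(m_1(1+\gamma_j))$ (for $\gamma_j=\infty$ the two bounds read $1<1/R_{j+1}<1/m_1$, again equivalent to $m_1<R_{j+1}<1$). The one genuinely hard step is the stochastic monotonicity of the weights $w^{(n)}$ — equivalently the determinant inequality above — everything else being routine linear algebra and moment manipulations.
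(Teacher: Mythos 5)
Your proposal is correct in its \emph{scaffolding}: the cofactor expansion along the first column that yields the finite-$n$ recursion $R_{j,k}^n=(m_{k+1}+\gamma_jR_{j+1,k+1}^n)/(m_1+\gamma_jR_{j+1}^n)$, the expansion along the first row that gives the mixture representation $R_{j,k}^n=\sum_p w_p^{(n)}\,m_{k+p}/m_p$, the observation that monotonicity in $n$ then reduces to showing the weight sequence $w^{(n)}$ becomes stochastically larger as $n$ grows, and the clean reduction of $R_{j,k}^{n+1}>R_{j,k}^n$ to the determinant inequality $|U_k^rU^cW^{j,n}|\,|W^{j,n}|>|U_k^rW^{j,n}|\,|U^cW^{j,n}|$ via the last-row expansion of $W^{j,n+1}$. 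All of that matches the structure of the paper.

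But the \emph{key step} — proving that inequality, or equivalently the stochastic monotonicity of the weights — is not actually carried out, and the tools you invoke do not apply as stated. Andr\'eief's (Gram/Heine) identity expresses $\det\bigl[\int f_i g_j\,dQ\bigr]$ as a multiple integral of products of determinants, but $W^{j,n}$ is \emph{not} a Gram or Hankel matrix of moments: its subdiagonal entries are the parameters $-\gamma_i$, not moments of $Q$. So there is no immediate representation of $|W^{j,n}|$ or its companions as sums of manifestly nonnegative Hankel minors, and "cancelling the common factors $\prod_i\gamma_i$ and $m_p$" does not turn the tail inequality $\sum_{p\ge t}w_p^{(n+1)}\ge\sum_{p\ge t}w_p^{(n)}$ into a fixed polynomial identity, because the normalizing denominators $|W^{j,n}|$ and $|W^{j,n+1}|$ and the interior factors $|W^{p+j,n}|$ vs.\ $|W^{p+j,n+1}|$ all change with $n$. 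You flagged this as "the technical heart" — and it is; but the flag is where a proof is needed.

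The paper closes this gap by a different route. For $k=1$ it proves $R^n_1<R^{n+1}_1$ by an explicit linear-algebra construction (Lemma \ref{xy} in Appendix B): viewing $R^n_1=x_0^n$ as the first coordinate of the solution of $X^nW^n=r_1U^rW^n$, it builds $X^{n+1}$ from $X^n$ by an $\varepsilon$-perturbation that is visibly positive by H\"older's inequality on moment ratios. Only \emph{then}, having $k=1$, does it obtain the monotone decrease of $\gamma_jL_{j,n}$ (Corollary \ref{lphi}) and use that to verify the hypotheses of the elementary weighted-ratio Lemma \ref{simplelemma} to bootstrap to $k\ge2$ (Corollary \ref{ukn}). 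So the mixture/stochastic-dominance picture you describe is correct, but its verification rests on first securing the $k=1$ case by the Cramer's-rule argument, not on a moment-matrix positivity result. One smaller point: your derivation of $R_{j+1}<1$ via "$R_{j+1}=1\Rightarrow\rho_{j+1}=\delta_1$" tacitly assumes $S_Q=1$ and that no mass has escaped in the limit, neither of which is given; the paper instead squeezes $R_{j+1}>m_1$ and $R_{j+1}<1$ out of the strict monotone decrease of $\gamma_1L_{1,n}$ together with the uniform finite-$n$ bounds.
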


\subsection{Main results}

{\bf \noindent 1). Matrix representation. }

\vspace{3 mm}

We set a convention that for a term, say $\alpha_j$, in the general model, we use $\widetilde \alpha_j$ to denote the corresponding term in the first random model and $\widehat \alpha_j$ in the second random model, $\overline \alpha_j$ in Kingman's model. If the corresponding term does not depend on the index $j$, we just omit the index. 

Consider a finite backward sequence $(P_j^n)$ in the general model: 
\begin{equation}\label{recursion}P_n^n=Q, \quad P_j^n(dx)=(1-b_{j+1})\frac{xP_{j+1}^n(dx)}{\int yP_{j+1}^n(dy)}+b_{j+1}Q(dx), \quad   \,0\leq j\leq n-1.\end{equation}
The previous sequence used in Section \ref{pre} starts with $P_n^n=\delta_h$ and this one starts with $P_n^n=Q$. 
The advantage of this change is that the latter enjoys a matrix representation, which is the most important tool in this paper.
\begin{lemma}\label{matrix}Consider $(P_j^n)$ in (\ref{recursion}). 
For any $0\leq j\leq n$,\begin{equation}\label{p0n}
\frac{xP_{j}^n(dx)}{\int yP_j^n(dy)}=\frac{|W_x^{j+1,n}|}{|W^{j+1,n}|}Q(dx),
\end{equation}
and 
\begin{equation}\label{pphi}P_j^n(dx)=(1-b_{j+1})\frac{|W_x^{j+2,n}|}{|W^{j+2,n}|}Q(dx)
+b_{j+1}Q(dx).\end{equation}
\end{lemma}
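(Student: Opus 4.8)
The natural route is induction on $j$, counting down from $j=n$ to $j=0$, proving the first identity \eqref{p0n}; the second identity \eqref{pphi} is then an immediate consequence of the recursion \eqref{recursion}, since $P_j^n(dx) = (1-b_{j+1})\frac{xP_{j+1}^n(dx)}{\int yP_{j+1}^n(dy)} + b_{j+1}Q(dx)$ and the size-biased term $\frac{xP_{j+1}^n(dx)}{\int yP_{j+1}^n(dy)}$ equals $\frac{|W_x^{j+2,n}|}{|W^{j+2,n}|}Q(dx)$ by \eqref{p0n} applied at index $j+1$. So the whole content is in \eqref{p0n}.

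For the base case $j=n$: since $P_n^n = Q$, the left side of \eqref{p0n} is $\frac{xQ(dx)}{\int yQ(dy)} = \frac{x}{m_1}Q(dx)$, while the right side uses $W_x^{n+1,n} = (x)$ and $W^{n+1,n} = (m_1)$, whose ``determinants'' are $x$ and $m_1$; these agree. For the inductive step, assume \eqref{p0n} holds at index $j+1$, i.e. $\frac{xP_{j+1}^n(dx)}{\int yP_{j+1}^n(dy)} = \frac{|W_x^{j+2,n}|}{|W^{j+2,n}|}Q(dx)$. Plug the recursion for $P_j^n$ into the left side of \eqref{p0n} at index $j$: the numerator becomes $x\big[(1-b_{j+1})\frac{|W_x^{j+2,n}|}{|W^{j+2,n}|} + b_{j+1}\big]Q(dx)$, and the normalising constant $\int yP_j^n(dy)$ is the integral of the bracket against $y\,Q(dy)$. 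Writing $\gamma_{j+1} = (1-b_{j+1})/b_{j+1}$ and clearing the common factor $b_{j+1}/|W^{j+2,n}|$, the task reduces to the purely algebraic identity
\begin{equation}\label{keyalg}
\frac{x\big(\gamma_{j+1}|W_x^{j+2,n}| + |W^{j+2,n}|\big)}{\int y\big(\gamma_{j+1}|W_y^{j+2,n}| + |W^{j+2,n}|\big)Q(dy)} = \frac{|W_x^{j+1,n}|}{|W^{j+1,n}|}.
\end{equation}

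The crux is therefore to identify both $x\big(\gamma_{j+1}|W_x^{j+2,n}| + |W^{j+2,n}|\big)$ as (a constant multiple of) $|W_x^{j+1,n}|$ and, after integrating in $x$, the denominator as the matching multiple of $|W^{j+1,n}|$. This is a cofactor-expansion computation: expanding $|W_x^{j+1,n}|$ along its first column — whose entries are the row vector $(x, x^2, \dots)$ on top and $-\gamma_{j+1}$ just below — one gets $x$ times the minor obtained by deleting the first row and column (which, up to the $x$-shift built into the top row of $W_x^{j+2,n}$, is $|W^{j+2,n}|$) plus $\gamma_{j+1}\cdot x$ times the minor obtained by deleting the second row and first column (which is $|W_x^{j+2,n}|$, since deleting that row restores the $x$-powered top row). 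Care is needed with the sign from the cofactor and with the bookkeeping of how the leading $(x,x^2,\ldots)$ row versus the $(m_1,m_2,\ldots)$ row propagate when one passes from $W_x^{j+1,n}$ to $W_x^{j+2,n}$ — i.e. checking that deleting the appropriate row/column genuinely produces the smaller matrix of the same family rather than something off by an index shift. Once \eqref{keyalg} is verified for $W_x$, integrating against $Q(dx)$ (which replaces the top row $(x,x^2,\dots)$ by $(m_1,m_2,\dots)$ and turns $W_x^{j+1,n}$ into $W^{j+1,n}$) gives the denominator, and the induction closes. I expect the main obstacle to be precisely this careful matching of sub-matrices and signs in the first-column expansion; the probabilistic part is routine once \eqref{keyalg} is in hand. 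Finally, \eqref{pphi} follows by substituting \eqref{p0n} at index $j+1$ into \eqref{recursion}, with the boundary conventions $W_x^{n+1,n}=(x)$, $W_x^{m,n}=(1)$ for $m>n+1$ handling the cases $j=n-1$ and $j=n$.
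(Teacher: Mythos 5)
Your approach is the same as the paper's: backward induction on $j$ from $n$ down to $0$, with the inductive step reduced to the single first-column cofactor expansion $|W_x^{j+1,n}| = x\bigl(\gamma_{j+1}|W_x^{j+2,n}| + |W^{j+2,n}|\bigr)$ and its integrated analogue for $|W^{j+1,n}|$, exactly as in the paper. One small correction to your bookkeeping: the $(1,1)$ minor of $W_x^{j+1,n}$ is exactly $|W^{j+2,n}|$ (no ``$x$-shift'' at all), while the $(2,1)$ minor is $x\,|W_x^{j+2,n}|$ because the surviving top row $(x^2,\ldots,x^{n-j+1})$ has a common factor $x$; the extra $x$ in the second term therefore comes from the minor itself, not from the cofactor coefficient as your wording suggests, but your final identity \eqref{keyalg} is correct.
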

Letting $n$ go to infinity, we obtain the following. 
\begin{theorem}\label{h}
For $j$ fixed and $n$ tending to infinity, $P_j^n$ converges weakly to a limit, denoted by $\H_j$. If we denote $\H=\H_0$, then $\H:[0,1)^\infty\to M_1$ is a measurable function such that 
\begin{equation}\label{hjh}\H_j=\H(b_{j+1},b_{j+2},\cdots),\end{equation}
and
\begin{equation}\label{hj+1}\H_{j}(dx)=(1-b_{j+1})\frac{x \H_{j+1}(dx)}{\int y \H_{j+1}(dy)}+b_{j+1}Q(dx).\end{equation}
Moreover 
\begin{equation}\label{hl}\frac{1-b_{j+1}}{\int y \H_{j}(dy)}=\gamma_{j+1}L_{j+1}.\end{equation}
\end{theorem}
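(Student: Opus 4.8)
The plan is to reduce the statement to the weak convergence, for each fixed $i\ge 1$, of the auxiliary probability measures $\nu_{i,n}(dx):=\frac{|W_x^{i,n}|}{|W^{i,n}|}\,Q(dx)$, which I would then establish by the method of moments using Lemma~\ref{rl}. By \eqref{p0n} (with $j$ replaced by $i-1$), $\nu_{i,n}$ is exactly the size--biased law $\frac{xP_{i-1}^n(dx)}{\int yP_{i-1}^n(dy)}$, hence a genuine probability measure on $[0,1]$; and by \eqref{pphi}, $P_j^n(dx)=(1-b_{j+1})\,\nu_{j+2,n}(dx)+b_{j+1}Q(dx)$, so it suffices to handle the $\nu_{i,n}$. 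The key point is that $|W_x^{i,n}|$ is multilinear in its rows and only the first row depends on $x$, with entries $x,x^2,\dots,x^{n-i+2}$; consequently, for every $k\ge0$, $\int x^k|W_x^{i,n}|\,Q(dx)$ is the determinant obtained from $W^{i,n}$ by raising the indices of its first row by $k$, i.e.\ $|U_k^rW^{i,n}|$. Dividing by $|W^{i,n}|=\int|W_x^{i,n}|\,Q(dx)$ shows that the $k$-th moment of $\nu_{i,n}$ is precisely $R_{i,k}^n$ (and $R_{i,0}^n=1$). By Lemma~\ref{rl}, $R_{i,k}^n\uparrow R_{i,k}\le1$ as $n\to\infty$ for every $k$, and since $[0,1]$ is compact, convergence of all moments forces weak convergence; thus $\nu_{i,n}\Rightarrow\nu_i$, the unique probability measure on $[0,1]$ with moment sequence $(R_{i,k})_{k\ge0}$.

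It follows that $P_j^n\Rightarrow\H_j:=(1-b_{j+1})\nu_{j+2}+b_{j+1}Q$, which already gives the first assertion. For the measurability \eqref{hjh} I would note that $\nu_{i,n}$ depends only on $\gamma_i,\dots,\gamma_n$, equivalently on $b_i,\dots,b_n$, and that $(b_{j+1},\dots,b_n)\mapsto\int f\,dP_j^n$ is continuous for each $f\in C([0,1])$ by \eqref{pphi}; hence its pointwise limit $\int f\,d\H_j$ is Borel in $(b_{j+1},b_{j+2},\dots)$. Since such functionals generate the Borel $\sigma$-field of $M_1$ and the dependence has the same form for every $j$, one obtains $\H_j=\H(b_{j+1},b_{j+2},\dots)$ for a single measurable $\H$, with $\H=\H_0$.

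For the recursion \eqref{hj+1} I would compare moments rather than pass to the limit in \eqref{recursion}. The $k$-th moment of $\nu_{j+2}$ is $R_{j+2,k}$, and \eqref{rik} together with $b_{j+2}\gamma_{j+2}=1-b_{j+2}$ rewrites it as $\dfrac{b_{j+2}m_{k+1}+(1-b_{j+2})R_{j+3,k+1}}{b_{j+2}m_1+(1-b_{j+2})R_{j+3}}$, which is exactly the $k$-th moment of the size--biased version of $\H_{j+1}=(1-b_{j+2})\nu_{j+3}+b_{j+2}Q$; since $[0,1]$-moment sequences are determining, $\nu_{j+2}=\frac{x\H_{j+1}(dx)}{\int y\H_{j+1}(dy)}$, whence \eqref{hj+1} (the case $b_{j+2}=0$, i.e.\ $\gamma_{j+2}=\infty$, being the corresponding limiting identity). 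Finally \eqref{hl} is a one--line computation: $\int y\,\H_j(dy)=(1-b_{j+1})R_{j+2}+b_{j+1}m_1$, which is $>0$ because $m_1>0$ ($Q\ne\delta_0$) and $R_{j+2}>m_1$ by \eqref{boundrjlj}; substituting the expression \eqref{rjlj} for $\gamma_{j+1}L_{j+1}$ and using $\gamma_{j+1}=(1-b_{j+1})/b_{j+1}$ gives $\frac{1-b_{j+1}}{\int y\H_j(dy)}=\gamma_{j+1}L_{j+1}$ (reading $\gamma_{j+1}=\infty$, i.e.\ $b_{j+1}=0$, as the limiting case, where $\H_j=\nu_{j+2}$ and $\gamma_{j+1}L_{j+1}=1/R_{j+2}$).

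The main obstacle I anticipate is conceptual, not computational: one should resist trying to prove pointwise (in $n$) convergence of the densities $|W_x^{i,n}|/|W^{i,n}|$, which are not monotone in $n$, and instead notice that their $Q$-moments are exactly the quantities $R_{i,k}^n$ already controlled by Lemma~\ref{rl}; after that the argument is routine. The remaining care is in the measurability claim \eqref{hjh} and in keeping $\int y\H_{j+1}(dy)$ bounded away from $0$ so that size--biasing is well defined.
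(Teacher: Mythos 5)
Your proposal is correct, and it is built on the same machinery as the paper (the matrix representation of Lemma~\ref{matrix} and the monotonicity statements of Lemma~\ref{rl}), but the mechanism you use for the convergence is genuinely different. The paper establishes weak convergence by \emph{stochastic monotonicity}: via Corollary~\ref{pjn} it writes $P_j^n$ as an explicit convex combination $\sum_l C^n_{j,l}Q^l$ of the stochastically ordered family $Q^0\leq Q^1\leq\cdots$, deduces $P_j^n\leq P_j^{n+1}$ from the monotonicity of the coefficients (Corollary~\ref{lphi}), and then obtains \eqref{hj+1} by passing to the limit directly in the recursion \eqref{revp}; you instead observe that the $k$-th $Q$-moment of $|W_x^{i,n}|/|W^{i,n}|$ is exactly $R^n_{i,k}$ (a clean consequence of multilinearity of the determinant in the first row), invoke Lemma~\ref{rl} to get convergence of all moments, and conclude by the Hausdorff moment argument on $[0,1]$; you then verify \eqref{hj+1} by matching moment sequences through \eqref{rik} rather than by continuity of the size-biasing map under weak convergence. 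Both routes are sound. Yours buys a proof of \eqref{hj+1} that sidesteps any continuity discussion for $\mu\mapsto x\mu(dx)/\int y\mu(dy)$ (though that continuity is unproblematic here since $\int y\H_{j+1}(dy)\geq m_1>0$), and it identifies the limit intrinsically by its moments $(R_{i,k})_k$; the paper's route buys the stronger structural information that the approximation is monotone in the stochastic order and yields the explicit series expansion \eqref{hjhj} as a by-product, which is used later (e.g.\ in Corollary~\ref{hjj+1}). Your treatment of \eqref{hl} is essentially the same computation as the paper's, done at the level of the limit rather than at finite $n$; and your handling of the degenerate cases $b_i=0$ ($\gamma_i=\infty$) and of measurability is adequate.
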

Note that $(\H_j)$ is the limit of $(P_j^n)$ with $P_n^n=Q$, and $(\G_j)$ is the limit of $(P_j^n)$ with $P_n^n=\delta_h$. When $h=S_Q,$ it remains open whether $\H=\G_Q$ or not. But the equality holds in the first random model.

\begin{corollary}\label{h=g}
It holds that
$$(\I_{j,S_Q})\stackrel{d}{=}\left(\widetilde\H_j\right).$$

\end{corollary}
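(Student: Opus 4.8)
The plan is to recognise $\widetilde\H_0$ as the (essentially unique) invariant equilibrium at the fitness level $S_Q$ and then transport this identification in law to the whole backward sequence. By Theorem \ref{h}, $\widetilde\H_j=\H(\beta_{j+1},\beta_{j+2},\dots)$ for the fixed measurable map $\H$, so $(\widetilde\H_j)$ is a factor of the i.i.d.\ sequence $(\beta_n)$ and hence stationary and ergodic; in particular $\widetilde\H_1\stackrel{d}{=}\widetilde\H_0$ while $\widetilde\H_1$ is $\sigma(\beta_2,\beta_3,\dots)$-measurable, hence independent of $\beta_1$. Feeding this into (\ref{hj+1}) with $j=0$ shows that $\widetilde\H_0$ satisfies the invariance equation of Section \ref{pre}, i.e.\ $\widetilde\H_0$ is an invariant measure. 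The same remarks applied to (\ref{gf}) and to the stationary ergodic sequence $(\I_{j,S_Q})$ show that $\I_{0,S_Q}$ is also an invariant measure.

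Next I would pin down the support. From the matrix form (\ref{pphi}), the backward iterate $P_j^n$ built from $P_n^n=Q$ dominates the measure $b_{j+1}Q$, so $S_{P_j^n}=S_Q$ whenever $b_{j+1}>0$, and letting $n\to\infty$ gives $\widetilde\H_j\geq\beta_{j+1}Q$ (as measures). Since $\E[\beta]=b>0$, almost surely there is a smallest $k\geq1$ with $\beta_k>0$, whence $S_{\widetilde\H_{k-1}}=S_Q$; as size-biasing preserves the top of the support, (\ref{hj+1}) then propagates $S_{\widetilde\H_j}=S_Q$ down to $j=0$. Thus $\widetilde\H_0$ (and, by the same argument, $\I_{0,S_Q}$) is an invariant measure with top of support $S_Q$ almost surely.

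The key step is to show that any two invariant measures at level $S_Q$ have the same law. For this I would run the forward recursion (\ref{pi}) driven by an independent i.i.d.\ copy of $(\beta_n)$, started from an independent random $\nu$ that is invariant with $S_\nu=S_Q$: invariance keeps the law of the $n$-th iterate constant in $n$, while the global stability theorem of \cite{Y20} forces weak convergence to the equilibrium at level $S_Q$ (the limiting law is the same for every admissible initial law, since here $S_{P_0}=S_Q$ almost surely). Hence that common equilibrium law is shared by $\nu$; applying this to $\nu=\widetilde\H_0$ and to $\nu=\I_{0,S_Q}$ gives $\widetilde\H_0\stackrel{d}{=}\I_{0,S_Q}$.

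Finally I would bootstrap to the sequence. Fix $m$; both $\widetilde\H_m$ and $\I_{m,S_Q}$ are measurable functions of $(\beta_{m+1},\beta_{m+2},\dots)$, hence independent of $(\beta_1,\dots,\beta_m)$, and by stationarity and the previous step they have equal laws, so $(\widetilde\H_m,\beta_m,\dots,\beta_1)\stackrel{d}{=}(\I_{m,S_Q},\beta_m,\dots,\beta_1)$. Iterating the identical recursions (\ref{hj+1}) and (\ref{gf}) $m$ times expresses $(\widetilde\H_0,\dots,\widetilde\H_m)$ and $(\I_{0,S_Q},\dots,\I_{m,S_Q})$ as the same deterministic function of these tuples, so the two vectors agree in law, and letting $m\to\infty$ yields $(\I_{j,S_Q})\stackrel{d}{=}(\widetilde\H_j)$. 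The main obstacle is the uniqueness used in the third paragraph: if \cite{Y20} already records that for each admissible level the invariant measure is unique in law, this is immediate; otherwise one must supply the forward-iteration argument above and verify carefully that a random invariant $\nu$ with $S_\nu=S_Q$ falls under the hypotheses of the convergence theorem. A minor additional point is the support propagation in the second paragraph when $\beta$ can vanish with positive probability.
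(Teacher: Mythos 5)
Your proposal is correct and follows essentially the same route as the paper: identify $\widetilde\H_0$ and $\I_{0,S_Q}$ as invariant measures with top of support $S_Q$, invoke uniqueness in law of the invariant measure at level $S_Q$, and then lift the one-dimensional identity to the whole sequence via the common recursion. The uniqueness you reconstruct by the forward-iteration argument is indeed already recorded as Theorem~4 in \cite{Y20}, which is exactly what the paper cites; otherwise your extra care with the support propagation when $\P(\beta=0)>0$ and your explicit conditioning in the bootstrap step are just more detailed versions of what the paper compresses into the appeal to Kolmogorov's extension theorem.
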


\vspace{3 mm}

{\bf \noindent 2). Condensation criterion. }

\vspace{3 mm}

A remarkable application of the matrix representation is that 
the condensation criterion in Theorem \ref{conY20} can be written into a simpler and tractable form using matrices.

\begin{corollary}[Condensation criterion]\label{con}
\begin{enumerate}
\item If $h=S_Q$, then there is no condensation on $\{S_Q\}$ if 
\begin{equation}\E\left[\ln S_Q\Gamma_{1}\widetilde L_{1}\right]<0.\end{equation}
\item If $h>S_Q$, then there is no condensation on $\{S_Q\}$ if and only if
\begin{equation}\E\left[\ln h\Gamma_{1}\widetilde L_{1}\right]\leq 0.\end{equation}
\end{enumerate}
\end{corollary}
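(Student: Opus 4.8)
The plan is to deduce Corollary~\ref{con} directly from the criterion of \cite{Y20} recalled as Theorem~\ref{conY20}, by rewriting the random variable $\frac{1-\beta}{\int y\,\I_Q(dy)}$ appearing there in terms of the matrix quantity $\widetilde L_1$. Two inputs from the excerpt suffice. First, identity (\ref{hl}) of Theorem~\ref{h}, specialised to the first random model at index $j=0$ (so that $\widetilde\gamma_1=\Gamma_1$), gives the pointwise identity
\[
\frac{1-\beta_1}{\int y\,\widetilde\H_0(dy)}=\Gamma_1\widetilde L_1\qquad\text{a.s.}
\]
Second, Corollary~\ref{h=g} gives $\widetilde\H_0\stackrel{d}{=}\I_{0,S_Q}=\I_Q$.

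I would then pick a convenient realisation of the pair $(\beta,\I_Q)$ entering Theorem~\ref{conY20}. Since $\beta_1\stackrel{d}{=}\beta$ and, by the previous paragraph, $\widetilde\H_0\stackrel{d}{=}\I_Q$, the pair $(\beta_1,\widetilde\H_0)$ has the required marginal laws; and, as recalled at the end of Section~\ref{pre}, the value of $\E\left[\ln\frac{1-\beta}{\int y\,\I_Q(dy)}\right]$ does not depend on the joint law of $(\beta,\I)$. Evaluating this quantity along the realisation $(\beta_1,\widetilde\H_0)$ and invoking the displayed identity yields
\[
\E\left[\ln\frac{1-\beta}{\int y\,\I_Q(dy)}\right]=\E\left[\ln(\Gamma_1\widetilde L_1)\right].
\]
Because $Q\neq\delta_0$ we have $S_Q>0$ and $h\ge S_Q>0$, so $\ln S_Q$ and $\ln h$ are finite deterministic constants that may be added inside the expectation; hence the left-hand sides of the two conditions in Theorem~\ref{conY20} become $\E[\ln S_Q\Gamma_1\widetilde L_1]$ and $\E[\ln h\Gamma_1\widetilde L_1]$ respectively. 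Substituting into the two cases of Theorem~\ref{conY20} gives exactly the two statements of Corollary~\ref{con}.

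The proof is short because the substance lies in Theorem~\ref{h}, Corollary~\ref{h=g} and the criterion of \cite{Y20}, all available here. The only delicate point is the choice of realisation: $\widetilde\H_0$ is built from the very variable $\beta_1$ that also appears in the numerator, so one must make sure that the joint-law independence recalled from \cite{Y20} genuinely licenses this (a priori dependent) coupling. If one prefers to stay with an independent pair, the alternative is to realise $(\beta,\I_Q)$ as $(\beta_1,\widetilde\H_1)$ with $\beta_1\indep\widetilde\H_1$ (legitimate since $\widetilde\H_1\stackrel{d}{=}\I_Q$ by Corollary~\ref{h=g}), use (\ref{hl}) at $j=1$ to write $\int y\,\widetilde\H_1(dy)=\beta_2/\widetilde L_2$, split off the independent factor $1-\beta_1$, and then invoke the i.i.d.\ property of $(\beta_n)$ to replace $(\beta_2,\widetilde L_2)$ by $(\beta_1,\widetilde L_1)$ in distribution; this reroutes the computation through a stationarity step but lands on the same identity $\E\left[\ln\frac{1-\beta}{\int y\,\I_Q(dy)}\right]=\E[\ln\Gamma_1\widetilde L_1]$. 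There is no integrability obstruction either: the bound (\ref{boundrjlj}) forces $\widetilde L_1/\beta_1\in(1,1/m_1)$, so $\ln(\Gamma_1\widetilde L_1)=\ln(1-\beta_1)+\ln(\widetilde L_1/\beta_1)$ is the sum of a term in $[-\infty,0]$ and a bounded term, in agreement with the range $[-\infty,-\ln\int yQ(dy)]$ stated for the quantity.
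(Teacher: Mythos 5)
Your proposal is correct and follows exactly the paper's own (very terse) proof: combine the joint-law independence of $\E\left[\ln\frac{1-\beta}{\int y\,\I_Q(dy)}\right]$ with identity (\ref{hl}) at $j=0$ in the first random model and Corollary \ref{h=g}, then absorb the deterministic constants $\ln S_Q$ or $\ln h$. Your extra care about the coupling (the dependence of $\widetilde\H_0$ on $\beta_1$) and the integrability remark via (\ref{boundrjlj}) are sound elaborations of points the paper leaves implicit.
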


Note that the key quantity $\E\left[\ln \frac{(1-\beta)}{\int y\I_Q(dy)}\right]$ in Theorem \ref{conY20} is now rewritten as $\E\left[\ln \Gamma_{1}\widetilde L_{1}\right]$. An estimation of it is highly necessary to make the criterion applicable. To achieve this, we introduce the second important tool of this paper in the following lemma, which is interesting by itself. 
\begin{lemma}\label{key}
Let $f(x_1,\cdots,x_n):\R^n\mapsto\R$ be a bounded $C^2$  function with $\sum_{1\leq i\neq j \leq n}f_{x_ix_j}\leq 0$. Let $(\xi_1,\cdots,\xi_n)$ 
be $n$ exchangeable random variables in $\R$. Then 
$$\E[f(\xi_1,\cdots\xi_n)]\geq \E[f(\xi_1,\cdots,\xi_1)].$$ 
\end{lemma}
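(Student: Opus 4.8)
The plan is to reduce the exchangeable statement to a pointwise inequality about the function $f$ and then exploit exchangeability through a symmetrisation argument. The key elementary fact is the following: for any real numbers $x_1,\dots,x_n$ and the ``diagonal point'' $\bar x$ formed by replacing all coordinates by $x_1$ (or more symmetrically, by averaging — I will come back to which variant is cleanest), the condition $\sum_{i\neq j} f_{x_ix_j}\le 0$ forces a concavity-type estimate when we move from a generic configuration toward the diagonal. Concretely, I would consider the function $g(t):=\E\big[f(\xi_1 + t(\xi_1-\xi_1),\dots)\big]$ — more precisely, parametrise a path from $(\xi_1,\dots,\xi_n)$ to $(\xi_1,\dots,\xi_1)$ and differentiate twice in $t$, showing $g''(t)\le 0$ after taking expectations, so that $g$ is concave; combined with a first-derivative computation that vanishes at one endpoint by exchangeability, concavity yields $g(1)\ge g(0)$, i.e. the claim. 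The sign hypothesis $\sum_{i\neq j}f_{x_ix_j}\le 0$ is exactly what makes the second derivative of $f$ along such a ``collapsing'' direction nonpositive, because the direction vector has equal-sign structure and the diagonal (pure second derivative) terms cancel out of the increment.

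In more detail, I would set $\eta_i = \xi_i-\xi_1$ and define, for $t\in[0,1]$,
\[
\Phi(t) := \E\big[f(\xi_1+t\eta_1,\,\xi_1+t\eta_2,\,\dots,\,\xi_1+t\eta_n)\big],
\]
so $\Phi(1)=\E[f(\xi_1,\dots,\xi_n)]$ and $\Phi(0)=\E[f(\xi_1,\dots,\xi_1)]$ (using $\eta_1=0$). Then
\[
\Phi''(t) = \E\Big[\sum_{1\le i,j\le n} f_{x_ix_j}(\dots)\,\eta_i\eta_j\Big].
\]
The diagonal part $\sum_i f_{x_ix_i}\eta_i^2$ need not have a sign, so a naive bound fails; this is the main obstacle and the place where exchangeability must enter, not just the differential inequality. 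The resolution I would pursue is to symmetrise: by exchangeability of $(\xi_1,\dots,\xi_n)$, the law of $(\xi_1;\eta_1,\dots,\eta_n)$ is invariant under relabelling the roles, and averaging $\Phi$ over all choices of which coordinate plays the role of ``$\xi_1$'' (the anchor) replaces $\eta_i\eta_j$ by a symmetric combination in which, after the dust settles, only the off-diagonal Hessian entries $f_{x_ix_j}$ with $i\neq j$ survive with a definite (nonpositive) coefficient, while the diagonal contributions cancel. That is precisely where $\sum_{i\neq j}f_{x_ix_j}\le 0$ is used.

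The boundedness and $C^2$ hypotheses are there to justify differentiating under the expectation (dominated convergence, using that $f$ and its derivatives are bounded on the compact range of the arguments once we note the arguments stay in a bounded set only if the $\xi_i$ are bounded — since $f$ is globally bounded $C^2$ one should be slightly careful and instead dominate using boundedness of $f_{x_ix_j}$, which follows from $f\in C^2$ being bounded together with a standard interpolation/Landau–Kolmogorov-type control, or simply by assuming WLOG the derivatives are bounded, which is the intended reading). I expect the genuine content to be the symmetrisation identity that kills the diagonal Hessian terms; once that is in hand, concavity of the symmetrised $\Phi$ plus the endpoint computation $\Phi'(1)=\tfrac12\E\big[\sum_{i\neq j} f_{x_ix_j}\,\eta_i\eta_j\big]$ handled via the same symmetrisation finishes the argument, giving $\Phi(0)\le\Phi(1)$ as desired.
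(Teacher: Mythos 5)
There is a genuine gap at the exact point you flag as ``the main obstacle.'' Your straight-line interpolation $\Phi(t)=\E[f(\xi_1+t\eta_1,\dots,\xi_1+t\eta_n)]$ produces $\Phi''(t)=\E\big[\sum_{i,j}f_{x_ix_j}(\cdot)\eta_i\eta_j\big]$, whose diagonal part $\sum_i f_{x_ix_i}(\cdot)\eta_i^2$ is not controlled by the hypothesis, and the proposed fix does not work: averaging over which coordinate serves as anchor replaces this by $\frac1n\sum_k\sum_i f_{x_ix_i}\big(\gamma^{(k)}(t)\big)(\xi_i-\xi_k)^2$, where each summand is an unsigned second derivative multiplied by a nonnegative square and the Hessians are evaluated at anchor-dependent base points $\gamma^{(k)}(t)$; there is no cancellation mechanism, so concavity of the (symmetrised) $\Phi$ is not established. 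The endpoint step is also not sound as written: the identity ``$\Phi'(1)=\tfrac12\E[\sum_{i\neq j}f_{x_ix_j}\eta_i\eta_j]$'' equates a first derivative with a second-order expression, and the quantity actually needed, $\Phi'(1)=\E[\sum_i f_{x_i}(\xi)(\xi_i-\xi_1)]\ge 0$, has no evident sign either. So both halves of the ``concavity plus endpoint derivative'' scheme are missing.

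The device that avoids diagonal Hessian terms is not a second derivative along a line but a \emph{double difference in two distinct variables}: applying the fundamental theorem of calculus once in $x_1$ and once in $x_i$ ($i\ge 2$) expresses the rectangle increment
\[
f(a,\underbrace{a,\dots,a}_{n-1})+f(b,\underbrace{b,\dots,b}_{n-1})-f(a,\underbrace{b,\dots,b}_{n-1})-f(b,\underbrace{a,\dots,a}_{n-1})
=\sum_{i=2}^{n}\int_b^a\!\!\int_b^a f_{x_1x_i}(\cdots)\,dx_1\,dx_i
\]
purely through mixed partials, which is where the sign hypothesis enters (after first symmetrising $f$ itself over permutations, which is harmless by exchangeability). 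One then collapses the coordinates one at a time: exchangeability lets you write $\E[f(\xi_1,\dots,\xi_1,\xi_{i+1},\dots,\xi_n)]$ as half the sum of the two ``mixed'' configurations obtained by swapping the roles of $\xi_1$ and $\xi_{i+1}$, and the rectangle inequality then pushes this down to the configuration with one more collapsed coordinate; iterating from $i=1$ to $n-1$ gives the claim. If you want to keep a continuous-path flavour, you would have to run the path coordinate-by-coordinate and still convert the resulting single-variable increments into double integrals of mixed partials — at which point you have reproduced the discrete argument.
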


The estimation of $\E[\ln \Gamma_1\widetilde L_1]$ is given as follows. 
\begin{theorem}\label{gammal3}
%With $\gamma_\beta,\gamma, L_\beta, l$ defined as above, 
We have 
\begin{equation}\label{3}\E[\ln  \Gamma \widehat L]\leq \E[\ln \Gamma_1\widetilde L_1]\leq \ln \gamma\overline L\end{equation}
where \begin{equation}\label{bargammal}\gamma \overline L=\frac{1-b}{\int y \K_Q(dy)}=\left\{ \begin{array}{ll}
          \frac{1-b}{\theta_b}, & \mbox{if $\int \frac{Q(dx)}{1-x/S_Q}> b^{-1}$};\\
&\\
         \frac{1}{S_Q}, & \mbox{if $\int \frac{Q(dx)}{1-x/S_Q}\leq  b^{-1}$},\end{array} \right.\end{equation}
and 
$$\Gamma \widehat L= \frac{1-\beta}{\int y\A_Q(dy)}.$$
\end{theorem}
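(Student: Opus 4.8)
\textbf{Proof proposal for Theorem \ref{gammal3}.}

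The plan is to exploit two facts: that the three models all share the same marginal law $\E[\beta_n]=\E[\beta]=b$, and that the relevant functional $\Gamma_1 \widetilde L_1 = \lim_n \Gamma_1 \widetilde L_{1,n}$ is, for each fixed $n$, a function of the i.i.d.\ block $(\beta_1,\dots,\beta_n)$ (equivalently of $(\Gamma_1,\dots,\Gamma_n)$), as one reads off from the definition \eqref{lr} and the recursion \eqref{laa}. So both inequalities in \eqref{3} will come from applying an exchangeability/convexity comparison at the level of the finite-$n$ truncation and then passing to the limit. Concretely, I would first show that $\ln \gamma_1 L_{1,n}$, viewed as a function $g_n(\Gamma_1,\dots,\Gamma_n)$, satisfies the off-diagonal concavity hypothesis $\sum_{i\neq j} \partial_{x_i}\partial_{x_j} g_n \le 0$ required by Lemma \ref{key}, after a suitable change of variables so that the function is $C^2$ and bounded (one can truncate $\Gamma$ or work with $b_j$ bounded away from $0$ and $1$, then remove the truncation by monotone limits using Lemma \ref{rl}). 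Granting this, Lemma \ref{key} applied to the exchangeable (indeed i.i.d.) vector $(\Gamma_1,\dots,\Gamma_n)$ gives
\[
\E[\ln \gamma_1 L_{1,n}(\Gamma_1,\dots,\Gamma_n)] \ge \E[\ln \gamma_1 L_{1,n}(\Gamma_1,\dots,\Gamma_1)],
\]
and the right-hand side is exactly $\E[\ln \Gamma \widehat L_n]$ because setting all the $\Gamma_j$ equal to a single copy $\Gamma$ is precisely the second random model (Kingman's model compounded over $\beta$). Letting $n\to\infty$ and using $\gamma_j L_{j,n}\downarrow \gamma_j L_j$ from Lemma \ref{rl} together with dominated/monotone convergence (the bounds \eqref{boundrjlj} control everything) yields the lower bound $\E[\ln \Gamma\widehat L]\le \E[\ln \Gamma_1\widetilde L_1]$, and the identification $\Gamma\widehat L = (1-\beta)/\int y\,\A_Q(dy)$ follows from \eqref{hl} applied conditionally on $\beta$, i.e.\ from Kingman's theorem.

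For the upper bound I would instead use Jensen's inequality in the form that pushes the randomness the other way: by concavity of $\ln$, or more precisely by a conditional-expectation/convexity argument on the recursion \eqref{laa}, one shows $\E[\ln \Gamma_1 \widetilde L_1] \le \ln \E[\text{(something)}] = \ln \gamma\overline L$, where the deterministic quantity $\gamma \overline L$ is Kingman's model with the constant $b=\E[\beta]$. The cleanest route is probably to prove that $n\mapsto \E[\ln\Gamma_1\widetilde L_{1,n}]$ is handled by noting $R_{2}^n$ enters \eqref{laa} through $1/(m_1+\Gamma_1 R_2^n)$, that $\ln$ of this is concave in $\Gamma_1$, and that conditioning successively on $(\beta_1,\beta_2,\dots)$ and applying Jensen at each stage collapses the random recursion to the deterministic fixed-point equation for $\gamma\overline L$; the explicit two-case formula \eqref{bargammal} is then just Kingman's theorem (Theorem \ref{King}) restated via \eqref{hl}, with the dichotomy governed by whether $\int Q(dx)/(1-x/S_Q)$ exceeds $b^{-1}$.

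The main obstacle is verifying the off-diagonal concavity condition $\sum_{i\neq j} f_{x_ix_j}\le 0$ for $f = \ln \gamma_1 L_{1,n}$ as a function of $(\Gamma_1,\dots,\Gamma_n)$: the recursion \eqref{rik}--\eqref{rjlj} expresses $\gamma_1 L_1$ as an iterated Möbius-type map in the $\Gamma_j$'s, so the mixed partials are a nested sequence of rational expressions whose signs must be tracked carefully. I expect one proves by backward induction on $j$ a sign pattern for the first and second derivatives of $\ln R_{j+1,k}$ and $\ln \gamma_j L_{j}$ with respect to each $\Gamma_i$ with $i>j$ — monotonicity (all first partials of one sign) plus a log-concavity/supermodularity statement — and that this inductive invariant is exactly strong enough to close under the elementary step $x\mapsto 1/(m_1+\Gamma x)$ and $x\mapsto (m_{k+1}+\Gamma x)/(m_1+\Gamma x)$. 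The boundedness and $C^2$ requirements of Lemma \ref{key} are a secondary technical nuisance, dealt with by a truncation $\beta_j\in[\epsilon,1-\epsilon]$ and a limiting argument using the uniform bounds in \eqref{boundrjlj}.
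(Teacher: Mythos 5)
Your overall strategy (Lemma \ref{key} for the lower bound, coordinatewise Jensen for the upper bound, then a limit in $n$) matches the paper's in spirit, but you apply it to the wrong functional, and the step you yourself flag as ``the main obstacle'' is precisely where the argument breaks. You propose to verify $\sum_{i\neq j}\partial_{x_i}\partial_{x_j}g_n\le 0$ and coordinatewise concavity directly for $g_n=\ln\gamma_1L_{1,n}$. Writing $\gamma_1L_{1,n}=\gamma_1|W^{2,n}|/|W^{1,n}|$, its logarithm is a \emph{difference} $\ln\gamma_1+\ln|W^{2,n}|-\ln|W^{1,n}|$ of two log-determinants whose mixed partials in $b_i,b_j$ are each strictly positive (Proposition \ref{wij}); the sign of the difference is not determined, and no such sign is established anywhere in the paper. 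Likewise the concavity of $\ln\gamma_1L_{1,n}$ in each $b_j$ (needed for your Jensen step) is not available; note also that you invoke concavity ``in $\Gamma_1$'', whereas Jensen must be applied in the variables $b_j$ themselves, since the mean being matched is $\E[\beta_j]=b$ and not $\E[\Gamma_j]=\gamma$. Your proposed backward induction on the M\"obius recursion to track signs of nested mixed partials is exactly the computation the paper is built to avoid, and there is no indication it closes.

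The paper's key device, absent from your proposal, is to replace the single-step quantity by the aggregate $\ln\Psi_n$ with $\Psi_n=\prod_{j=1}^n\gamma_j/|W^n|$. For this telescoped quantity the second partials are tractable: $\partial^2\ln\Psi_n/\partial b_i\partial b_j=-\partial^2\ln|W^n|/\partial b_i\partial b_j<0$ for $i\neq j$ by the determinant identity of Proposition \ref{wij}, and $\partial^2\ln\Psi_n/\partial b_j^2<0$ by Lemma \ref{aga}. The quantity of interest is then recovered only after averaging: by stationarity $\Gamma_j\widetilde L_{j,n}\stackrel{d}{=}\Gamma_1\widetilde L_{1,n-j+1}$, so $\frac1n\E[\ln\widetilde\Psi_n]\to\E[\ln\Gamma_1\widetilde L_1]$ (Lemma \ref{rn}), and the finite-$n$ inequalities $\E[\ln\widehat\Psi_n]\le\E[\ln\widetilde\Psi_n]\le\ln\overline\Psi_n$ pass to \eqref{3} upon dividing by $n$. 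Your identification of $\Gamma\widehat L$ and the two-case formula \eqref{bargammal} via \eqref{hl} and Theorem \ref{King} is correct, but without the Ces\`aro/determinant device the two inequalities in \eqref{3} do not go through as written.
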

\begin{remark}
The two inequalities in (\ref{3}) are not strict in general. Here is an example.  By Theorem \ref{King}, if $\int \frac{Q(dx)}{1-x/S_Q}\leq b^{-1},$ one can obtain by simple computations that $\gamma \overline L=1/S_Q$. For the same reason, if $\int \frac{Q(dx)}{1-x/S_Q}\leq \beta^{-1}$ almost surely, then $\Gamma \widehat L=1/S_Q$ almost surely. So taking $\beta$ and $b$ small enough, the two inequalities in (\ref{3}) become equalities. %To know when the inequalities occur more efforts are needed to exploit the matrix approach. 
\end{remark}

As Kingman's model is a special kind of the first random model, Corollary \ref{con} applies to Kingman's model as well. The second inequality in (\ref{3}) implies that Kingman's model is easier to have condensation than the first random model in general. This is made more clear in the next Theorem \ref{randet}. 
\vspace{3 mm}

{\bf \noindent 3). Comparison between the first random model and the other models. }

\vspace{3 mm}

For succinctness, the results that we present in this part are only in the case $h=S_Q$. 
However all the results can be easily proved for $h>S_Q$, if we do not stick with strict inequalities. The main idea is to take a new mutant distribution $(1-\frac{1}{n})Q+\frac{1}{n}\delta_h$ and consider the limits of equilibriums as $n$ tends to infinity. 

We consider an equilibrium to be fitter if it has higher moments and bigger condensate size. In the following, we provide three theorems on the comparison of moments and/or condensate sizes. 
\begin{theorem}\label{randet}
Between Kingman's model and the first random model, if $\P(\beta=b)<1$, we have
\begin{enumerate}
\item in terms of moments, $$\E\left[\int y^k\I_Q(dy)\right]<  \int y^k \K_Q(dy), \quad \forall \,k=1, 2,\cdots.$$
\item in terms of condensate size, if $Q(S_Q)=0$ and $I_Q>0, a.s.,$ then
$$\E[I_Q]<  K_Q.$$
%\item if $Q(u_Q)>0$ (hence $C_*^{u_Q}$=0, a.s.), then
%$$\E[P_*^{u_Q}(u_Q)]< \bar P_*^{u_Q}(u_Q).$$
\end{enumerate}
 \end{theorem}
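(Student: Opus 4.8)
The plan is to deduce Theorem~\ref{randet} from the estimate in Theorem~\ref{gammal3} together with the matrix representation, using the strict convexity/concavity implicit in Lemma~\ref{key}. First I would note that the core inequality in both parts is a statement comparing a random quantity to its ``deterministic shadow'': by Corollary~\ref{h=g} we have $(\I_{j,S_Q})\eqdist(\widetilde\H_j)$, so moments of $\I_Q$ are computed via the matrix formula in Lemma~\ref{matrix} and Theorem~\ref{h}, namely in terms of the $L_{j,n}$ and $R_{j,k}^n$ built from $\gamma_j=(1-b_j)/b_j$. The iterative identity $(\ref{rik})$ expresses $R_{j,k}$ as an explicit function of $(\Gamma_{j+1},\Gamma_{j+2},\dots)$, and similarly $\int y^k\widetilde\H_0(dy)$ is a function $g_k(\Gamma_1,\Gamma_2,\dots)$ of the i.i.d.\ sequence. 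The goal is then to show $\E[g_k(\Gamma_1,\Gamma_2,\dots)]<g_k(\Gamma,\Gamma,\dots)$, the right-hand side being exactly the Kingman value $\int y^k\K_Q(dy)$ (since setting all $b_j=b$ reduces the general model to Kingman's model and $\K_Q$ to the deterministic equilibrium). This is the reverse direction of Lemma~\ref{key}: I need $g_k$ to have the opposite mixed-derivative sign, i.e.\ $\sum_{i\neq j}(g_k)_{x_ix_j}\geq 0$, which would give $\E[g_k(\Gamma_1,\dots)]\leq g_k(\Gamma,\dots)$, and then promote $\leq$ to $<$ using $\P(\beta=b)<1$.

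Concretely, for part~(1) I would proceed by truncation: work with the finite sequence $(P_j^n)$ and the ratios $R_{j,k}^n$, for which $R_{j,k}^n$ is a finite rational function of $\gamma_{j+1},\dots,\gamma_n$ (equivalently of $\Gamma_{j+1},\dots,\Gamma_n$ in the random model), establish the mixed-derivative sign condition there, apply the exchangeable inequality (Lemma~\ref{key}, in its version giving $\E[f(\xi_1,\dots,\xi_n)]\leq\E[f(\xi_1,\dots,\xi_1)]$ when the mixed second derivatives are $\geq 0$), and then pass to the limit $n\to\infty$ using the monotone convergence in Lemma~\ref{rl} and bounded convergence since all quantities live in $[0,1]$ by $(\ref{boundrjlj})$. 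The recursion $(\ref{rik})$, of the Möbius/continued-fraction form $R\mapsto (m_{k+1}+\gamma R')/(m_1+\gamma R')$, is the right structure: each step is a fractional-linear map in the new variable, and composing such maps tends to preserve the convexity-type inequality one needs for the exchangeability argument. Alternatively — and this may be cleaner — one can avoid differentiating $g_k$ directly and instead argue by a coupling/Jensen step generation by generation: the map taking $(P_{j+1}^n)$ to $(P_j^n)$ in $(\ref{recursion})$ is, after the size-biasing, a fractional-linear operation whose averaging over $b_{j+1}$ can be compared via a conditional Jensen inequality to the deterministic step with $b$, provided one tracks the correct convexity of $u\mapsto \int y^k\,\frac{yu(dy)}{\int zu(dz)}$ in the relevant parametrisation.

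For part~(2), the condensate size $I_Q$ is governed by the geometric series in $(\ref{bsfas})$: $I_Q=1-\sum_{j\geq 0}\prod_{l=1}^j\frac{1-\beta_l}{\int y\widetilde\I_l(dy)}\,\beta_{j+1}m_j$, and each product $\prod_{l=1}^j\gamma_l\widetilde L_l$ (using $(\ref{hl})$) is a function of finitely many $\Gamma_l$. So $\E[I_Q]=1-\sum_{j\geq0}m_j\,\E\big[\beta_{j+1}\prod_{l=1}^j\gamma_l\widetilde L_l\big]$, and since $\beta_{j+1}=1/(1+\Gamma_{j+1})$ is independent of the product over $l\le j$, it suffices to bound each $\E\big[\prod_{l=1}^j\gamma_l\widetilde L_l\big]$ from above by the Kingman product $(\gamma\overline L)^j$ evaluated with all arguments equal. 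This is once again Lemma~\ref{key} applied to the function $(x_1,\dots,x_j)\mapsto\prod_{l=1}^j\gamma_l(x_l)L_l(x_{l+1},\dots)$ — or, more robustly, one first applies Theorem~\ref{gammal3}'s upper bound $\E[\ln\Gamma_1\widetilde L_1]\leq\ln\gamma\overline L$ together with the stationarity of $(\widetilde L_l)$ and an induction to control the telescoping product. Summing the series and using $\sum_{j}m_j(\gamma\overline L)^j\beta$-type identities (which reproduce $1-K_Q$ in Kingman's model by $(\ref{bsfas})$) yields $\E[I_Q]\le K_Q$, and strictness comes from $\P(\beta=b)<1$ making at least one inequality strict.

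The main obstacle I anticipate is verifying the mixed-derivative sign condition $\sum_{i\neq j}(g_k)_{x_ix_j}\geq 0$ (equivalently the concavity-type property needed for the reverse exchangeable inequality) for the functions produced by iterating the fractional-linear recursion $(\ref{rik})$ — a single Möbius map is easy, but the compositions that define $R_{j,k}$ and the products defining $\prod\gamma_l L_l$ could in principle destroy the sign, so one must find the right monotone/convex invariant preserved under the recursion, or reorganise the argument as a one-step conditional Jensen inequality iterated over generations (which sidesteps having to control an $n$-variable function globally). A secondary, more technical point is justifying the limit interchange $n\to\infty$ (and the infinite sum in part~(2)), but given the uniform bounds in $(\ref{boundrjlj})$ and the monotone convergence in Lemma~\ref{rl} this should be routine dominated convergence, and the strictness upgrade should follow from the fact that equality in Lemma~\ref{key} forces the mixed derivatives to vanish along the diagonal, which fails here unless $\beta$ is a.s.\ constant.
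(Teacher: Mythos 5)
Your proposal has two genuine gaps, one in each part. For part (1), your primary route misidentifies what the exchangeability lemma delivers. Lemma \ref{key} (and its sign-reversed version) compares $\E[f(\xi_1,\dots,\xi_n)]$ with $\E[f(\xi_1,\dots,\xi_1)]$ --- the right-hand side still carries an expectation over the common random value, so it is the \emph{second} random model, not Kingman's model. Writing ``$\E[g_k(\Gamma_1,\dots)]\leq g_k(\Gamma,\dots)$, the right-hand side being exactly the Kingman value'' conflates $g_k(\Gamma,\dots,\Gamma)$ (random) with $g_k(\gamma,\dots,\gamma)$ (deterministic), and the paper's own Theorem \ref{kinofa} shows that for the first moment there is \emph{no} one-way inequality between the second random model and Kingman's model, so no argument routed through the exchangeable comparison can close the gap to $\K_Q$. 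The correct tool is the one you mention only as a fallback: strict concavity of $R^n_{1,k}$ (and of $R_{1,k}$ via the recursion (\ref{rik})) in \emph{each individual} $b_i$, followed by one-dimensional Jensen coordinate by coordinate, using independence and $\E[\beta_i]=b$; strictness in the $b_1$-coordinate plus $\P(\beta=b)<1$ gives the strict inequality. But that concavity --- Lemma \ref{mono}, which rests on the determinant estimate $\frac{d|W^n|}{d\gamma_j}/|W^n|\in(b_j,1/\gamma_j)$ of Lemma \ref{aga} --- is precisely the content you flag as your unresolved ``main obstacle,'' so the substantive step is missing rather than merely technical.

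For part (2) the direction of your bound is reversed. Since $I_Q=1-\sum_{j\geq0}\prod_{l=1}^{j}\frac{1-\beta_l}{\int y\I_l(dy)}\beta_{j+1}m_j$ by (\ref{bsfas}), proving $\E[I_Q]<K_Q$ requires the expected sum in the random model to \emph{exceed} Kingman's sum; bounding each $\E\bigl[\prod_{l=1}^{j}\Gamma_l\widetilde L_l\bigr]$ \emph{above} by $(\gamma\overline L)^j$ would yield $\E[I_Q]\geq K_Q$, the opposite conclusion. Moreover Theorem \ref{gammal3} controls $\E[\ln\Gamma_1\widetilde L_1]$, and by Jensen $\ln\E[\prod_l\Gamma_l\widetilde L_l]\geq\E[\ln\prod_l\Gamma_l\widetilde L_l]$, so the logarithmic bound cannot give an upper bound on the expectation of the product in any case (and the factors $\Gamma_l\widetilde L_l$ are not independent across $l$, so the expectation does not factor). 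The paper avoids the series altogether: it shows $H_j/(1-b_{j+1})=\lim_{k\to\infty}S_Q^{-k}R_{j+2,k}$ (Corollary \ref{hjj+1}) together with the one-step identity $\frac{H_j}{1-b_{j+1}}=S_Q\,\gamma_{j+2}L_{j+2}\,\frac{H_{j+1}}{1-b_{j+2}}$, establishes strict concavity of $\gamma_2L_2$ in $b_2$ from (\ref{boundrjlj}), and then runs the same coordinate-wise Jensen argument as in part (1). You would need to rebuild part (2) on that representation rather than on the condensate series.
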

%\begin{remark}The above comparison shows that Kingman's model overwhelmingly domin Ates random i-i-d model in terms of condensation size and fitness. 
%\end{remark}
%he above comparison shows that the first random model is completely dominated by Kingman's model in terms of condensation and fitness.

\begin{theorem}\label{iidofa} Between the two random models, the following inequality holds
$$\E\left[\ln \int y\I_Q(dy)\right]\leq \E\left[\ln \int y\A_Q(dy)\right].$$
\end{theorem}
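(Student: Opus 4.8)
The plan is to translate both mean fitnesses $\int y\,\I_Q(dy)$ and $\int y\,\A_Q(dy)$ into the matrix quantities $\Gamma_1\widetilde L_1$ and $\Gamma\widehat L$ appearing in Theorem \ref{gammal3}; once this is carried out, the asserted inequality turns out to be nothing but the first inequality of (\ref{3}).

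I would first record the representation of the left-hand side. By Corollary \ref{h=g} we have $\I_Q=\I_{0,S_Q}\stackrel{d}{=}\widetilde\H_0$, hence $\int y\,\I_Q(dy)\stackrel{d}{=}\int y\,\widetilde\H_0(dy)$; and reading the identity (\ref{hl}) in the first random model (i.e.\ with $b_l=\beta_l$) at $j=0$ gives
$$\int y\,\widetilde\H_0(dy)=\frac{1-\beta_1}{\Gamma_1\widetilde L_1},\qquad\text{so}\qquad \E\Big[\ln\!\int y\,\I_Q(dy)\Big]=\E\Big[\ln\frac{1-\beta_1}{\Gamma_1\widetilde L_1}\Big].$$
The right-hand side requires nothing, since $\Gamma\widehat L$ is \emph{defined} in Theorem \ref{gammal3} as $(1-\beta)/\!\int y\,\A_Q(dy)$, whence $\E\big[\ln\!\int y\,\A_Q(dy)\big]=\E\big[\ln\frac{1-\beta}{\Gamma\widehat L}\big]$. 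As $\beta_1\stackrel{d}{=}\beta$, the common term $\E[\ln(1-\beta_1)]=\E[\ln(1-\beta)]$ cancels, and the claim becomes $\E[\ln\Gamma\widehat L]\le\E[\ln\Gamma_1\widetilde L_1]$, which is precisely the first inequality of (\ref{3}) in Theorem \ref{gammal3}.

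The one delicate point --- and it is minor --- is integrability. The bounds (\ref{boundrjlj}) give $\tfrac{\Gamma_1}{m_1+\Gamma_1}<\Gamma_1\widetilde L_1<\tfrac{\Gamma_1}{m_1(1+\Gamma_1)}$, and the same for $\Gamma\widehat L$; substituting into the two representations shows that $\int y\,\I_Q(dy)$ and $\int y\,\A_Q(dy)$ both lie in $(m_1,1)$ almost surely, so both sides of the desired inequality are finite, even though $\E[\ln(1-\beta)]$, $\E[\ln\Gamma_1\widetilde L_1]$ and $\E[\ln\Gamma\widehat L]$ may simultaneously equal $-\infty$, in which case the cancellation above is formally $\infty-\infty$. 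To circumvent this, note that $\ln\Gamma_1\widetilde L_1-\ln(1-\beta_1)$ and $\ln\Gamma\widehat L-\ln(1-\beta)$ are themselves bounded, again by (\ref{boundrjlj}) (they lie in $(0,-\ln m_1)$), so it suffices to prove the bounded-valued inequality $\E[\ln\Gamma\widehat L-\ln(1-\beta)]\le\E[\ln\Gamma_1\widetilde L_1-\ln(1-\beta_1)]$; this is what the argument behind Theorem \ref{gammal3} delivers, its first inequality being obtained by comparing exactly these quantities via Lemma \ref{key}, and alternatively one may truncate $\beta$ to $\beta\wedge(1-1/k)$, apply the clean cancellation in the now-finite case, and let $k\to\infty$, since $\Gamma_1\widetilde L_1$ and $\Gamma\widehat L$ depend continuously on the $\gamma$-variables and the integrands stay in $(\ln m_1,0)$. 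Accordingly, the main obstacle is just this integrability bookkeeping: the inequality itself, once the two representations are available, is a one-line consequence of Theorem \ref{gammal3}.
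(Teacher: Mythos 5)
Your translation of both sides into matrix quantities is exactly right: Corollary \ref{h=g} plus identity (\ref{hl}) give $\int y\,\I_Q(dy)\stackrel{d}{=}(1-\beta_1)/(\Gamma_1\widetilde L_1)$, the definition of $\Gamma\widehat L$ gives the mirror formula for the second model, and the quantities $\ln\int y\,\I_Q(dy),\ \ln\int y\,\A_Q(dy)$ do indeed live in $(\ln m_1,0)$ by (\ref{boundrjlj}). However, the paper does \emph{not} derive Theorem \ref{iidofa} by bootstrapping from the first inequality of (\ref{3}), and the integrability point you call ``minor'' is precisely why. When $\E[\ln(1-\beta)]=-\infty$, both $\E[\ln\Gamma_1\widetilde L_1]$ and $\E[\ln\Gamma\widehat L]$ are $-\infty$ as well (near $\beta\to 1$ one has $\Gamma\widehat L\sim\Gamma/m_1$), so (\ref{3}) degenerates to $-\infty\le-\infty$ and carries no information, and the cancellation of $\E[\ln(1-\beta)]$ is an $\infty-\infty$ step.

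Your first fallback --- ``this is what the argument behind Theorem \ref{gammal3} delivers'' --- is not accurate as stated: that argument applies Lemma \ref{key} to $\ln\Psi_n=\sum_j\ln\gamma_j-\ln|W^n|$, whose $n^{-1}\E$-limit is $\E[\ln\Gamma_1\widetilde L_1]$, not $-\E[\ln\int y\,\I_Q(dy)]$. What the paper actually does is apply Lemma \ref{key} to the \emph{different} function $-\ln\big(|W^n|\prod_j b_j\big)$: since $\sum_j\ln b_j$ contributes nothing to mixed second partials, the cross-partials are again $-\partial^2\ln|W^n|/\partial b_i\partial b_j<0$ by Proposition \ref{wij}, so Lemma \ref{key} yields $\E[\ln(|\widetilde W^n|\prod_j\beta_j)]\le\E[\ln(|\widehat W^n|\beta^n)]$ at finite $n$, and the $n^{-1}$-limits of these are exactly $\E[\ln\int y\,\I_Q(dy)]$ and $\E[\ln\int y\,\A_Q(dy)]$ (computations (\ref{betaj})--(\ref{betan})), with bounded integrands throughout. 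In short: the fix is not to cancel after the fact but to pick the potential-type function whose normalised log already limits to the target quantity while keeping the same cross-partial sign. Your second fallback (truncating $\beta$ at $1-1/k$) is plausible but would require a continuity-in-$\beta$ argument for $\widetilde L_1$ that you do not supply. So your core toolkit (Corollary \ref{h=g}, identity (\ref{hl}), Lemma \ref{key}, the sign of $\partial^2\ln|W^n|/\partial b_i\partial b_j$) is the right one and matches the paper's, but the reduction to (\ref{3}) should be replaced by a direct application of Lemma \ref{key} to $-\ln(|W^n|\prod_j b_j)$.
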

%So the second random model yields an equilibrium fitter than that of the first random model in terms of the logarithm of the moment of order 1. 

%\noindent Conjecture: $\E[ \int y^kdP_*^h]\leq \E[ \int y^kd\widehat P_*^h], \quad \forall k\geq 1$

\begin{theorem}\label{kinofa}Between Kingman's model and the second random model, it holds that
$$\E[A_Q]\geq K_Q, \text{ if }Q(S_Q)=0.$$
But there is no one-way inequality between $\E[\int y\A_Q(dy)]$ and $ \int y\K_Q(dy)$.
%For any $k\geq 1$, there is no one-way inequality between $\E[\int y^k d\widehat P_*^h]$ and $ \E[\int y^kP_*^h]$ nor between $\E[\ln \int y^k d\widehat P_*^h]$ and $ \E[\ln\int y^k P_*^h]$. 
\end{theorem}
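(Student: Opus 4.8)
The plan is to prove Theorem \ref{kinofa} by combining the matrix representation with the exchangeability inequality of Lemma \ref{key}, exactly as in the proofs of Theorems \ref{gammal3} and \ref{iidofa}, but now comparing Kingman's model against the \emph{second} random model instead of the first. Recall that in the second random model all $b_n=\beta$, so $\widehat\A_Q$ is literally a compound version of $\K_Q$: conditionally on $\beta$, the quantity $\int y\widehat\A_Q(dy)$ equals $\int y\K_Q(dy)$ with $b$ replaced by $\beta$, i.e.\ $\int y\A_Q(dy)=\theta_\beta$ (or $S_Q$ in the condensation regime), and $A_Q$ equals $K_Q$ with $b$ replaced by $\beta$. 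Thus the comparison $\E[A_Q]\geq K_Q$ reduces to showing that $b\mapsto K_{Q}$, viewed as a function of the single mutation probability $b$ with $\E[\beta]=b$, is \emph{convex} on $(0,1)$; then Jensen's inequality gives $\E[A_Q]=\E[K_Q(\beta)]\geq K_Q(\E[\beta])=K_Q$.

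First I would make the function $b\mapsto K_Q(b)$ explicit. By Theorem \ref{King}, in the condensation regime $K_Q(b)=1-\int \frac{bQ(dy)}{1-y/S_Q}$, which is affine in $b$, so convexity is trivial there and the inequality is an equality at the level of that branch; the interesting work is at the transition and in the non-condensation regime where one must be careful since $K_Q=0$ there but $A_Q$ may still be positive because $\beta$ can dip into the condensation regime. Actually the cleanest route avoids explicit differentiation: since $Q(S_Q)=0$, the condensate size $K_Q(b)$ equals $\big(1-\int\frac{bQ(dy)}{1-y/S_Q}\big)^+$, the positive part of an affine decreasing function of $b$, hence a convex function of $b$ on $[0,1)$. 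Convexity is preserved under taking positive parts of affine functions, so $\E[K_Q(\beta)]\geq K_Q(b)$ by Jensen, which is precisely $\E[A_Q]\geq K_Q$. I would double-check the boundary/degenerate cases ($\int\frac{Q(dy)}{1-y/S_Q}=\infty$, $\beta$ possibly attaining values making the integral infinite) and note that the argument also recovers the earlier observation that strict inequality holds unless $\beta$ is a.s.\ confined to one regime.

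For the second assertion — that there is no one-way inequality between $\E[\int y\A_Q(dy)]$ and $\int y\K_Q(dy)$ — the plan is to exhibit two examples with opposite inequalities, or one example plus a limiting/perturbation argument. In one regime, $\int y\A_Q(dy)$ is the compound of $\theta_\beta$; since $b\mapsto\theta_b$ need not be convex or concave (it solves the implicit equation \eqref{sb}), Jensen's inequality has no definite sign, and by choosing $Q$ and the law of $\beta$ appropriately — for instance a two-point law for $\beta$ straddling the condensation threshold, versus one concentrated well inside the non-condensation regime — one gets $\E[\int y\A_Q(dy)]$ above or below $\int y\K_Q(dy)=\theta_b$. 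I would present the simplest concrete choice: take $Q$ uniform on a small interval near $1$ (or a two-point $Q$) so that $\theta_b$ can be computed in closed form, then verify numerically/symbolically that one choice of two-point $\beta$-law makes the difference positive and another makes it negative. Alternatively, one contradiction direction can be obtained for free: whenever $\beta$ is a.s.\ in the condensation regime, $\int y\A_Q(dy)=S_Q\geq\theta_b=\int y\K_Q(dy)$, and the inequality is strict unless $b$ is also in that regime; the reverse direction requires a genuine example where the concavity of $b\mapsto\theta_b$ on the relevant range forces $\E[\theta_\beta]<\theta_b$.

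The main obstacle is the second part: proving the \emph{absence} of a one-way inequality rigorously means one must produce an honest pair of examples, and while $\theta_b$ is defined only implicitly, showing it fails to be convex (or concave) on any fixed interval requires either an explicit tractable $Q$ or a careful asymptotic expansion of $\theta_b$ near the endpoints $b\to 0$ and near the condensation threshold. I expect the cleanest fix is a two-point or three-point mutant distribution $Q$, for which \eqref{sb} becomes a low-degree polynomial equation in $\theta_b$ solvable in radicals, making the sign of the second derivative — and hence both directions of the Jensen comparison — a finite computation. The first part, by contrast, is essentially immediate once one recognizes $K_Q(b)$ as the positive part of an affine function.
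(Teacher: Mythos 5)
Your proof of the first assertion is exactly the paper's. Since $Q(S_Q)=0$, Theorem \ref{King} gives $K_Q(b)=\bigl(1-\int\frac{bQ(dy)}{1-y/S_Q}\bigr)^+$, the positive part of an affine function of $b$, hence convex on $[0,1)$; conditioning on $\beta$ identifies $A_Q$ with $K_Q(\beta)$, and Jensen yields $\E[A_Q]\geq K_Q(\E[\beta])=K_Q$. That part is complete and needs no change.

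For the second assertion you identify the right strategy (reduce to the failure of both convexity and concavity of $b\mapsto\theta_b$), but you stop short of producing the example and explicitly flag the computation as the main obstacle. As written this is a genuine gap: the entire content of the claim is the existence of such an example, and ``choose $Q$ and the law of $\beta$ appropriately'' is not a proof. The paper closes the gap not with a two-point $Q$ but with $Q(dx)=dx$ on $[0,1]$: then $\int\frac{Q(dx)}{1-x}=\infty$, so every $b$ lies in the non-condensation regime and $\int y\K_Q(dy)=\theta_b$; the substitution $t=(1-b)/\theta_b$ turns the implicit equation (\ref{sb}) into the closed forms $b=-t/\ln(1-t)$ and $\theta_b=1/t+1/\ln(1-t)$, and a direct computation of $d^2\theta_b/db^2$ shows it is positive for $t$ near $0$ and negative at $t=0.5$, after which a two-point law for $\beta$ concentrated near the corresponding $b$ gives either sign of $\E[\theta_\beta]-\theta_b$. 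Two further cautions. First, your ``free'' direction is incorrect: in the condensation regime $\int y\A_Q(dy)=(1-\beta)S_Q$, not $S_Q$, and likewise $\int y\K_Q(dy)=(1-b)S_Q$, not $\theta_b$; this is linear in the mutation probability, so confining $\beta$ to the condensation regime yields equality under Jensen, not a strict inequality. Second, your suggestion of a two-point $Q$ ``straddling the condensation threshold'' needs care, since at the threshold the mean $\int y\K_Q(dy)$ switches between the two branches of (\ref{bargammal}) and one must track which formula applies for each value of $\beta$; the paper's choice of uniform $Q$ avoids this entirely by forcing the non-condensation regime for all $b$.
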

%It is more delicate between the two models. Kingman's model has a smaller condensate size but for the moment of order 1 there is no simple relationship. 
It turns out that the first random model is completely dominated by Kingman's model in terms of condensate size and moments of all orders of the equilibrium. We conjecture that the first random model is also dominated by the second random model in the same sense, as supported by a different comparison in Theorem \ref{iidofa}. The relationship between Kingman's model and the second random model is more subtle. 

\section{Perspectives}

Recently, the phenomenon of condensation has been studied a lot in the literature. Biaconi {\it et al} \cite{BFF09} argued that the phase transition of condensation phenomenon is very close to Bose-Einstein condensation where a large fraction of a dilute gas of bosons cooled to temperatures very close to absolute zero occupy the lowest quantum state. See also \cite{BB11} for another model which can be mapped into the physics context. Under some assumptions, Dereich and M\"orters \cite{DM13} studied the limit of the scaled shape of the traveling wave of mass towards the condensation point in Kingman's model, and the limit turns out to be of the shape of some gamma function. A series of papers \cite{ D16, VDM18, DM15, MMU16, DMM17} were written later on to investigate the shape of traveling wave in other models where condensation appears and have proved that gamma distribution is universal. Park and Krug \cite{PK08} adapted Kingman's model to a finite population with unbounded fitness distribution and observed in a particular case emergence of Gaussian distribution as the wave travels to infinity. 

The first random model, as a natural random variant of Kingman's model, provides an interesting example to study condensation in detail. The matrix representation can be a handy tool to study the shape of the traveling wave to verify if the gamma-shape conjecture holds. On the other hand, we can also ask the question: will the relationships between the three models revealed and conjectured in this paper be applicable to other more sophisticated models under the competition of two forces, particularly to those models on the balance of selection and mutation? It is very tempting to say yes. The verification of the universality constitutes a long term project.

\section{Proofs}

\subsection{Proof of Lemma \ref{matrix}}

\begin{proof}[\bf Proof of Lemma \ref{matrix}]

Note that $$\frac{xP_{n}^n(dx)}{\int yP_{n}^n(dy)}=\frac{xQ(dx)}{m_1}=\frac{|W_x^{n+1,n}|}{|W^{n+1,n}|}Q(dx).$$
Assume that for some $0\leq j\leq n-1$, $$\frac{xP_{j+1}^n(dx)}{\int y P_{j+1}^n(dy)}=\frac{| W_x^{j+2,n}|}{|W^{j+2,n}|}Q(dx).$$
Then $$P_{j}^n(dx)=(1-b_{j+1})\frac{|W_x^{j+2,n}|}{|W^{j+2,n}|}Q(dx)+b_{j+1}Q(dx).$$ Consequently
\begin{align*}
\frac{xP_{j}^n(dx)}{\int yP_{j}^n(dy)}&=\frac{(1-b_{j+1})x\frac{|W_x^{j+2,n}|}{| W^{j+2,n}|}+b_{j+1}x}{(1-b_{j+1})\int y\frac{| W_y^{j+2,n}|}{| W^{j+2,n}|}Q(dy)+b_{j+1}m_1}Q(dx)&\\
&=\frac{\gamma_jx|W_x^{j+2,n} |+x|W^{j+2,n}|}{\gamma_j |U^rW^{j+2,n} |+m_1|W^{j+2,n}|}Q(dx)=\frac{|W_x^{j+1,n}|}{|W^{j+1,n}|}Q(dx).&
\end{align*}
The last equality is obtained by expanding $W_x^{j+1,n}$ and $W^{j+1,n}$ on the first column. By induction, we prove (\ref{p0n}). As a consequence, we also get (\ref{pphi}).% We thus conclude by induction. 
\end{proof}
Lemma \ref{matrix} allows us to express $P_j^n$ using $\{Q^j,Q^{j+1},\cdots,Q^{n-j}\}$. 
To write down the explicit expression, we introduce  $$\Phi_{j,l,n}:=\left(\prod_{i=0}^{l-1}\gamma_{i+j}L_{i+j,n}\right)L_{j+l,n}m_{l+1},\quad n\geq j\geq 1, l\geq 0.$$
\begin{corollary}\label{pjn} For $(P_j^n)$ with $P_n^n=Q$
\begin{align}\label{pmn}
P_{j}^n(dx)=\sum_{l=0}^{n-j}C^n_{j,l}Q^{l}(dx),\quad  \,\,0\leq j\leq n-1
\end{align}
where $C^n_{j,0}=b_{j+1};\quad  C^n_{j,l}=(1-b_{j+1})\Phi_{j+2,l-1,n},\quad   1\leq l\leq n-j.$

\end{corollary}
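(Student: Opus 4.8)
The plan is to derive Corollary \ref{pjn} by combining the matrix representation in Lemma \ref{matrix} with the recursive structure of the measures $Q^l$. The starting point is the identity $(\ref{pphi})$: for $0\leq j\leq n-1$,
\begin{equation*}
P_j^n(dx)=(1-b_{j+1})\frac{|W_x^{j+2,n}|}{|W^{j+2,n}|}Q(dx)+b_{j+1}Q(dx),
\end{equation*}
so the only thing that needs to be understood is the density $|W_x^{j+2,n}|/|W^{j+2,n}|$ against $Q$. First I would expand the determinant $|W_x^{j+2,n}|$ along its first row: since that row is $(x,x^2,\dots,x^{n-j})$, the cofactor expansion writes $|W_x^{j+2,n}|$ as a linear combination $\sum_{l} x^{l+1}\,(\text{minor})$, where the minors are determinants of matrices built from the $-\gamma_i$ and $m_1,m_2,\dots$ entries. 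Integrating against $Q(dx)$ and using $\int x^{l+1}Q(dx)=m_{l+1}=\int x\,Q^{l}(dx)\cdot$(normalising constant) — more precisely $x^{l+1}Q(dx)=m_{l+1}Q^{l+1}(dx)$ — shows that $\frac{|W_x^{j+2,n}|}{|W^{j+2,n}|}Q(dx)$ is a combination of $Q^{l}(dx)$ for $l$ in the appropriate range. The coefficients are ratios of determinants, and identifying them with $\Phi_{j+2,l-1,n}$ is the combinatorial heart of the argument.

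The cleanest way to pin down the coefficients is an induction on $j$ descending from $n$, matching the proof of Lemma \ref{matrix}. For the base case $j=n$ one has $P_n^n=Q=Q^0$, consistent with $C^n_{n,0}=1$ (interpreting $b_{n+1}=1$) and the empty sum. For the inductive step, assume $(\ref{pmn})$ holds for $P_{j+1}^n$. Apply the selection (size-biasing) operation: $\frac{xP_{j+1}^n(dx)}{\int yP_{j+1}^n(dy)}$. Using $x\,Q^{l}(dx)=\frac{m_{l+1}}{m_l}Q^{l+1}(dx)$ and $\int y\,Q^l(dy)=\frac{m_{l+1}}{m_l}$, one computes that size-biasing sends $\sum_l a_l Q^l$ to $\sum_l \big(a_l \frac{m_{l+1}}{m_l}/\sum_k a_k\frac{m_{k+1}}{m_k}\big)Q^{l+1}$, i.e. it shifts the index $l\mapsto l+1$ and reweights. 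Then $P_j^n=(1-b_{j+1})(\text{size-biased }P_{j+1}^n)+b_{j+1}Q$ reintroduces a $Q^0$ term with coefficient $b_{j+1}$ and multiplies the shifted coefficients by $(1-b_{j+1})$. Tracking the normalising constants, using $(\ref{laa})$ in the form $L_{j,n}=1/(m_1+\gamma_j R^n_{j+1})$ and the definition $R^n_{j,k}=|U_k^rW^{j,n}|/|W^{j,n}|$, one verifies that the denominator $\int yP_{j+1}^n(dy)$ is exactly what is needed so that the product of reweighting factors telescopes into $\prod_{i=0}^{l-1}\gamma_{i+j+2}L_{i+j+2,n}$ times $L_{j+2+l,n}m_{l+1}$, which is $\Phi_{j+2,l-1,n}$. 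This gives $C^n_{j,l}=(1-b_{j+1})\Phi_{j+2,l-1,n}$ for $l\geq 1$ and $C^n_{j,0}=b_{j+1}$, as claimed.

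The main obstacle I anticipate is bookkeeping the determinant ratios so that the telescoping in the inductive step is transparent rather than a morass of indices. Specifically, one must show that after size-biasing $P_{j+1}^n$ and renormalising, the coefficient of $Q^{l}$ in $\frac{xP_{j+1}^n(dx)}{\int yP_{j+1}^n(dy)}$ equals $\Phi_{j+1,l-1,n}$ (with the shift built in), which amounts to the identity
\begin{equation*}
\frac{\Phi_{j+3,l-2,n}\,\frac{m_l}{m_{l-1}}}{\sum_{k\geq 1}\Phi_{j+3,k-2,n}\,\frac{m_k}{m_{k-1}}+b_{j+2}\cdot m_1/\cdots}=\Phi_{j+2,l-1,n}/(\text{something}),
\end{equation*}
and the denominator must be recognised as $\int y P_{j+1}^n(dy)$ expressed via $\gamma_{j+1}L_{j+1,n}$ or equivalently via Theorem \ref{h}'s identity $(\ref{hl})$ in its finite-$n$ form. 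I would handle this by first proving the finite-$n$ analogue of $(\ref{hl})$, namely $\frac{1-b_{j+1}}{\int yP_{j}^n(dy)}=\gamma_{j+1}L_{j+1,n}$, directly from Lemma \ref{matrix} by integrating $(\ref{pphi})$ against $x$ and using $(\ref{laa})$; with that lemma in hand the renormalisation constants collapse and the coefficients fall out. Everything else is routine manipulation of the definitions of $Q^l$, $m_k$, $L_{j,n}$, $R_{j,k}^n$ and $\Phi_{j,l,n}$.
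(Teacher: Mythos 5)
Your proposal outlines two routes and ultimately commits to the second (induction on $j$), which is genuinely different from the paper's proof. The paper proves the corollary directly: it first establishes the telescoping identity
$\frac{|W^{j+l,n}|}{|W^{j,n}|}=\prod_{i=0}^{l-1}\frac{|W^{i+j+1,n}|}{|W^{i+j,n}|}=\prod_{i=0}^{l-1}L_{i+j,n}$,
then expands $|W_x^{j,n}|$ along its first row, observing that the $(1,l)$ cofactor is exactly $\bigl(\prod_{i=0}^{l-2}\gamma_{i+j}\bigr)|W^{j+l,n}|$ (the block structure forces a lower-triangular $\gamma$-block above $W^{j+l,n}$), and divides by $|W^{j,n}|$ to read off
$\frac{|W_x^{j,n}|}{|W^{j,n}|}=\sum_{l=1}^{n-j+2}\Phi_{j,l-1,n}\frac{x^l}{m_l}$
in one step. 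Plugging into $(\ref{pphi})$ with $j\mapsto j+2$ finishes it. What you call ``the combinatorial heart'' thus has a clean answer — the cofactors factor by inspection and the ratios telescope into the $L$'s — and you didn't need to retreat to induction. Your inductive argument is nevertheless correct: the base case and the shift/reweighting of coefficients under size-biasing check out, and the key normalising identity $\frac{1-b_{j+1}}{\int y P_j^n(dy)}=\gamma_{j+1}L_{j+1,n}$ (which the paper itself proves later, inside the proof of Theorem \ref{h}, from $(\ref{pphi})$ and $(\ref{laa})$) is exactly what makes the coefficients telescope into $\Phi_{j+2,l-1,n}$; there is no circularity, since that identity rests only on Lemma \ref{matrix}. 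The trade-off: the paper's route is one determinant expansion and is considerably shorter; yours requires an auxiliary lemma and more index-chasing but avoids the need to recognise the cofactor structure of $W_x^{j,n}$ directly. Both are sound.
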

\begin{proof}Let $0\leq j\leq n-1$. Note that for any $1\leq l\leq n-j$
$$\frac{|W^{j+l,n}|}{|W^{j,n}|}=\prod_{i=0}^{l-1}\frac{|W^{i+j+1,n}|}{|W^{i+j,n}|}=\prod_{i=0}^{l-1}L_{i+j,n}.$$
Expanding the first row of $W_x^{j,n}$ and using the above result, we get 
\begin{align}\label{aanx}
\frac{|W_x^{j,n}|}{|W^{j,n}|}&=\frac{1}{|W^{j,n}|}\sum_{l=1}^{n-j+2}\left(\prod_{i=0}^{l-2}\gamma_{i+j}\right)|W^{j+l,n} |x^{l}&\nonumber\\
&=\sum_{l=1}^{n-j+2}\left(\prod_{i=0}^{l-2}\gamma_{i+j}L_{i+j,n}\right)L_{j+l-1,n}x^{l}=\sum_{l=1}^{n-j+2}\Phi_{j,l-1,n}\frac{x^{l}}{m_{l}}.&\end{align}
Then we plug it in (\ref{pphi}), changing $j$ to $j+2$. 
\end{proof}

\subsection{Proof of Lemma \ref{rl}}
We need to prove first a few more results on monotonicity. The following H\"older's inequality will be heavily used:
\begin{equation}\label{holder}\frac{m_{j+1}}{m_{j+2}}< \frac{m_j}{m_{j+1}}<\frac{1}{m_1}, \quad \forall j\geq 1.\end{equation}
\begin{lemma}\label{gk1}
For $j\geq 1, n\geq j-1$, $R^n_j$ increases strictly in $n$ to $R_j\in (0,1]$, as
$$m_1<R^n_{j}<R^{n+1}_{j}< 1 .$$
%Denote the limit of $R^n_{i+1,1}$ by $G_{i+1,1}$, and the limit of $R^n_{1}$ by $G_{1}$.
\end{lemma}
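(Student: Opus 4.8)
The plan is to establish the chain $m_1 < R^n_j < R^{n+1}_j < 1$ by a simultaneous induction, exploiting the recursive structure of the determinants $|W^{j,n}|$ obtained by expanding along the first column. Recall from (\ref{laa}) that expanding $W^{j,n}$ along its first column gives $|W^{j,n}| = m_1|W^{j+1,n}| + \gamma_j|U^rW^{j+1,n}|$, and more generally the same expansion applied to $U^r_kW^{j,n}$ yields a recursion expressing $R^n_{j,k}$ in terms of $R^n_{j+1,k+1}$ and $R^n_{j+1}=R^n_{j+1,1}$. Concretely, $R^n_{j,k} = \dfrac{m_{k+1} + \gamma_j R^n_{j+1,k+1}}{m_1 + \gamma_j R^n_{j+1}}$ for $1\le j\le n$, with the boundary value $R^n_{n+1,k} = m_{k+1}/m_1$ (and when $\gamma_j=\infty$ the ratio degenerates to $R^n_{j,k}=R^n_{j+1,k+1}/R^n_{j+1}$). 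This recursion will be the workhorse.

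First I would prove the two-sided bound $m_1 < R^n_j < 1$ (and more generally $m_{k+1}/m_1 \cdot$-type bounds, or simply $m_{k+1} < R^n_{j,k} < m_k$, whichever is cleanest) by downward induction on $j$ from $j=n+1$ to $j=1$, for fixed $n$. The base case $R^n_{n+1,k} = m_{k+1}/m_1$ satisfies $m_{k+1} < m_{k+1}/m_1 < m_k$ by H\"older's inequality (\ref{holder}) (using $m_1<1$, which holds since $Q\neq\delta_1$ is implicit, or rather $m_1 \le 1$ with the relevant strictness). For the inductive step, assuming $m_{k+2} < R^n_{j+1,k+1} < m_{k+1}$ and $m_2 < R^n_{j+1} < m_1$, one plugs these into the recursion for $R^n_{j,k}$; the function $t \mapsto (m_{k+1}+\gamma_j s)/(m_1+\gamma_j t)$ is increasing in $s$ and decreasing in $t$, so the bounds propagate after checking the two resulting algebraic inequalities, each of which reduces to a consequence of (\ref{holder}) (e.g. $m_{k+1}m_1 > m_{k+2}$ type relations, possibly $m_{k+1}^2 \le m_k m_{k+2}$). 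The $\gamma_j=\infty$ case is handled by taking limits, as the paper's convention allows.

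Next, for strict monotonicity in $n$, I would again induct downward on $j$, now comparing $W^{j,n}$ with $W^{j,n+1}$ (the latter being one size larger). The key observation is that the size-$(n-j+2)$ matrix $W^{j,n}$ sits inside $W^{j,n+1}$ as the principal submatrix obtained by deleting the last row and column; expanding $|W^{j,n+1}|$ along its first column gives the same recursion with $n$ replaced by $n+1$, so $R^{n+1}_{j,k} = (m_{k+1}+\gamma_j R^{n+1}_{j+1,k+1})/(m_1+\gamma_j R^{n+1}_{j+1})$. The base case of the induction is $j=n+1$: here $R^n_{n+1,k} = m_{k+1}/m_1$ while $R^{n+1}_{n+1,k}$ is a genuine ratio of $2\times2$-type determinants, and one checks directly that $R^{n+1}_{n+1,k} > m_{k+1}/m_1$ using (\ref{holder}) — this is where the strict increase is "born". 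For the step, given $R^n_{j+1,k+1} < R^{n+1}_{j+1,k+1}$ and $R^n_{j+1} < R^{n+1}_{j+1}$ together with the bounds already proved, monotonicity of $(s,t)\mapsto (m_{k+1}+\gamma_j s)/(m_1+\gamma_j t)$ (increasing in $s$, decreasing in $t$) gives $R^n_{j,k} < R^{n+1}_{j,k}$; one must be slightly careful because $t$ moves the wrong way, so the argument needs the numerator increase to dominate — this is automatic from the explicit recursion since increasing both $R_{j+1,k+1}$ and $R_{j+1}$ by passing from $n$ to $n+1$ still increases the whole ratio once the uniform bounds $R_{j+1} < 1$ are in force (a short cross-multiplication settles it). Finally, a bounded strictly increasing sequence converges, giving $R_j \in (m_1,1]$ as claimed.

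The main obstacle I anticipate is the monotonicity step: because $R^n_{j+1}$ appears in the denominator, passing from $n$ to $n+1$ increases both numerator and denominator of the recursion, and one must verify the net effect is still an increase. This is not automatic from abstract monotonicity of the map in each variable separately; it requires the explicit cross-multiplied inequality, which in turn leans on the uniform bounds from the first part (in particular $R^n_{j+1} < 1$) and on H\"older (\ref{holder}). Getting the bookkeeping of indices right — the $k$-shift in $R_{j,k}$ versus $R_{j+1,k+1}$, and the boundary terms $R^n_{n+1,k}$ — is the other place where care is needed, but it is routine once the recursion is set up.
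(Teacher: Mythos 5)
Your setup is reasonable: the recursion $R^n_{j,k} = (m_{k+1} + \gamma_j R^n_{j+1,k+1})/(m_1 + \gamma_j R^n_{j+1})$ is exactly (\ref{rik}) specialised to finite $n$, and your base-case checks at $j=n+1$ are correct consequences of H\"older's inequality. The trouble is in the inductive step for monotonicity in $n$. Writing $s = R^n_{j+1,k+1}$, $s'=R^{n+1}_{j+1,k+1}$, $t=R^n_{j+1}$, $t'=R^{n+1}_{j+1}$, the cross-multiplication of
$$\frac{m_{k+1}+\gamma_j s'}{m_1+\gamma_j t'} \;>\; \frac{m_{k+1}+\gamma_j s}{m_1+\gamma_j t}$$
reduces to $(s'-s)/(t'-t) > (m_{k+1}+\gamma_j s)/(m_1+\gamma_j t)=R^n_{j,k}$. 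That is a quantitative bound on the \emph{ratio of the increments}, not something that follows from knowing only $s'>s$, $t'>t$, and the uniform bounds: with $m_1=0.5$, $m_2=0.3$, $m_3=0.2$, $\gamma_j=1$ and $k=1$, moving $(s,t)=(0.21,0.35)$ to $(s',t')=(0.22,0.45)$ is consistent with your inductive hypothesis, yet it pushes the ratio from $0.51/0.85=0.6$ down to $0.52/0.95\approx 0.547$. So the ``short cross-multiplication'' does not settle the step; your inductive hypothesis (signs of increments plus the two-sided bounds) is too weak to carry the induction.

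The paper sidesteps this completely. For $k=1$ it proves the strict increase in $n$ by a global perturbation argument, Lemma \ref{xy} in Appendix B: $R^n_1$ is identified, via Cramer's rule, as the first coordinate $x_0^n$ of the unique solution of the linear system $X^nW^n=r_1U^rW^n$, and the solution $X^{n+1}$ is built explicitly from $X^n$ by perturbing the first coordinate by a strictly positive $\varepsilon$ whose existence is guaranteed by H\"older's inequality (\ref{holder}). This compares the full solution vectors at sizes $n$ and $n+1$ rather than stepping through the recursion, and it is where the strictness actually originates. The general $k\geq 2$ is then handled separately in Corollary \ref{ukn} by expanding $P_j^n$ as a convex combination of the $Q^l$ (Corollary \ref{pjn}) and invoking the ratio-of-convex-combinations Lemma \ref{simplelemma}, using the already established monotonicity of the coefficients $\gamma_j L_{j,n}$. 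To make your simultaneous-recursion induction close you would need to strengthen the hypothesis to control the increment ratio $(R^{n+1}_{j+1,k+1}-R^n_{j+1,k+1})/(R^{n+1}_{j+1}-R^n_{j+1})$ itself, which is essentially a different lemma that you would still have to prove.
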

\begin{proof}
%Without additional claims, the determinants of matrices appearing below are all strictly positive. 
%Take $i=1.$ It suffices to show that the second term in (\ref{laa}) decreases in $n$. But instead we prove $R^n_{2,1}$ increases in $n$. 
%\begin{equation}\label{ura2n}R^n_{2,1}.\end{equation}
By H\"older's inequality, for $ j=n+1$,  
$$m_1<R^{n}_{n+1}=\frac{m_2}{m_1}<\frac{m_1m_2+\gamma_{n+1}m_3}{m_1^2+\gamma_{n+1}m_2}=R^{n+1}_{n+1}< 1.$$
Consider $n\geq j.$ Without loss of generality let $j=1.$ Using (\ref{lr})
$$R_{1}^n=\frac{|U^rW^{n}|}{|W^{n}|}.$$
The two matrices  $U^rW^{n}, W^{n}$ differ only on the first row, which is $(m_2, \cdots, m_{n+2})$ for the former, and $(m_1, \cdots, m_{n+1})$ for the latter. 
Again by H\"older's inequality, we have 
$$m_1< R^n_{1}<1,\,\,\forall \,n\geq 1.$$ %, for the purpose of notational simplicity
For the comparison of $R^n_{1}$ and $R^{n+1}_{1}$, we use Lemma \ref{xy} in the Appendix
where the values $x_0^n,x_0^{n+1}$ are exactly $R^n_{1}$ and $R^{n+1}_{1}$.

%Note that 
%$$\frac{|A^{n+1}|}{|U^rA^{n+1}|}=\frac{m_1\Big |A^{n} |+\gamma_{n+1} |U^cA^{n}|}{m_1 |U^rA^{n} |+\gamma_{n+1} |U^rU^cA^{n}|}.$$
%So it is equivalent to prove that for any $n\geq 1$
%$$\frac{|A^{n}|}{|U^rA^{n}|}> \frac{|U^cA^{n}|}{|U^rU^cA^{n}|}.$$
%or
%$$\frac{|A^{n}|}{|U^cA^{n}|}\geq \frac{|U^rA^{n}|}{|U^rU^cA^{n}|}.$$
%One can easily check that the monotonicity holds for at least $n=1,2,3.$
%Recall $X^n,Y^n$ from Lemma \ref{xy},
%$$x_0^n=\frac{|U^rW^n|}{|W^n|}, y_0^n=\frac{|U^rU^cW^n|}{|U^cW^n|}.$$
%So one has to show 
%$$x_0^n< y_0^n.$$
%By Lemma \ref{xy} below, $X$ and $Y$ are strictly positive. 
%Notice that $$X^nC_{n+1}(A^{n})=x_0^nm_{n+1}+x_1^nm_{n}+\cdots+x_{n}^nm_1=m_{n+2}.$$
%So
%$$X^nC_{n+1}(U^cW^n)=x_0^nm_{n+2}+x_1^nm_{n+1}+\cdots+x_{n}^nm_2< m_{n+3}.$$
%Then 
%$$X^nW^n-X^nU^cW^n=(0,0,\cdots,0,a)$$
%with some $a>m_{n+2}-m_{n+3}$. But 
%$$X^nW^n-Y^nU^cW^n=m_{n+2}-m_{n+3}.$$
%Proceeding similarly as in the proof of Lemma \ref{xy}, we can construct $Y^n$ from $X^n$ and obtain that
%$0<x_0^n< y_0^n.$%,\text{ if }\gamma_i\neq \infty; \quad x_i^n=y_i^n,\text{ otherwise}.$$
 %The bounds of $L_i$ comes from (\ref{lbd}). 
\end{proof}
Simply applying the above lemma and (\ref{laa}), we obtain the following Corollary.  
\begin{corollary}\label{lphi}
For any $j\geq 1$, $\gamma_j L_{j,n}$ decreases strictly in $n$ to $\gamma_j L_{j}$. Define $$\Phi_{j,l}:=\left(\prod_{i=0}^{l-1}\gamma_{i+j}L_{i+j}\right)L_{j+l}m_{l+1},\quad \forall j\geq 1, l\geq 0.$$ Then $\Phi_{j,l,n}=\Phi_{j,l}=0$ if $\gamma_{j+l}=\infty$, otherwise $\Phi_{j,l,n}$ decreases strictly  in $n$ to $\Phi_{j,l}$. 
\end{corollary}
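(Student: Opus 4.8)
The plan is to reduce everything to the strict monotonicity of $n\mapsto R^n_{j+1}$ furnished by Lemma~\ref{gk1}, combined with the closed form $L_{j,n}=1/(m_1+\gamma_j R^n_{j+1})$ from (\ref{laa}) and the conventions recorded just after it for the case $\gamma_j=\infty$.

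\textbf{Step 1 (convergence of $\gamma_j L_{j,n}$).} First I would split according to whether $\gamma_j$ is finite, recalling that $\gamma_j\in(0,\infty]$ always, since $b_j<1$. If $\gamma_j<\infty$, then $\gamma_j L_{j,n}=\gamma_j/(m_1+\gamma_j R^n_{j+1})$; since $\gamma_j>0$ and $R^n_{j+1}$ is strictly increasing in $n$ by Lemma~\ref{gk1}, the denominator is strictly increasing, so $\gamma_j L_{j,n}$ is strictly decreasing, and letting $n\to\infty$ with $R^n_{j+1}\uparrow R_{j+1}$ identifies the limit as $\gamma_j/(m_1+\gamma_j R_{j+1})=\gamma_j L_j$ via (\ref{rjlj}). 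If $\gamma_j=\infty$, the convention after (\ref{laa}) gives $\gamma_j L_{j,n}=1/R^n_{j+1}$, again strictly decreasing, with limit $1/R_{j+1}=\gamma_j L_j$ by (\ref{rjlj}). In either case $\gamma_j L_{j,n}\downarrow\gamma_j L_j$ strictly. The same computation without the prefactor $\gamma_j$ gives $L_{j,n}\downarrow L_j$, with all terms and the limit strictly positive when $\gamma_j<\infty$, and $L_{j,n}\equiv 0=L_j$ when $\gamma_j=\infty$.

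\textbf{Step 2 (convergence of $\Phi_{j,l,n}$).} If $\gamma_{j+l}=\infty$, then $L_{j+l,n}=0$ for every $n$, hence $\Phi_{j,l,n}=0$, and likewise $\Phi_{j,l}=0$, which disposes of that case. If $\gamma_{j+l}<\infty$, I would write $\Phi_{j,l,n}=\big(\prod_{i=0}^{l-1}\gamma_{i+j}L_{i+j,n}\big)L_{j+l,n}\,m_{l+1}$ and note, using Step~1, that each $n$-dependent factor $\gamma_{i+j}L_{i+j,n}$ and $L_{j+l,n}$ is strictly positive and strictly decreasing in $n$, while $m_{l+1}>0$ because $Q\ne\delta_0$. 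A product of finitely many strictly positive, strictly decreasing sequences, times a positive constant, is strictly decreasing, so $\Phi_{j,l,n}$ decreases strictly in $n$; and since each factor converges and there are finitely many of them, $\Phi_{j,l,n}\to\big(\prod_{i=0}^{l-1}\gamma_{i+j}L_{i+j}\big)L_{j+l}\,m_{l+1}=\Phi_{j,l}$.

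I do not expect a genuine obstacle here; the whole argument is bookkeeping on monotone sequences. The only points needing slight care are the behaviour at $\gamma_j=\infty$ (entirely governed by the conventions already fixed after (\ref{laa})) and the verification that every factor entering $\Phi_{j,l,n}$ is strictly positive, so that the product-of-monotone-sequences argument genuinely yields the strict decrease; that strictness is then inherited from, say, the factor $L_{j+l,n}$.
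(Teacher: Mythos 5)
Your argument is correct and is exactly what the paper does: the paper's proof of this corollary is the single sentence ``Simply applying the above lemma and (\ref{laa}), we obtain the following Corollary,'' and your two steps are precisely that elaboration, reducing strict decrease of $\gamma_j L_{j,n}$ (and of each factor of $\Phi_{j,l,n}$) to the strict increase of $R^n_{j+1}$ from Lemma~\ref{gk1} via the identity $L_{j,n}=1/(m_1+\gamma_j R^n_{j+1})$, with the $\gamma_j=\infty$ case handled by the stated conventions.
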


\begin{corollary}\label{ukn}
For any $j\geq 1, l\geq 1,$ $R^n_{j,k}$ increases strictly in $n$ to $R_{j,k}$.
\end{corollary}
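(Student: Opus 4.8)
The plan is to express $R^n_{j,k}$ through the coefficients $\Phi_{j,l,n}$ that already appeared in the proof of Corollary \ref{pjn}, and then bootstrap the case $k=1$ (Lemma \ref{gk1}) to arbitrary $k\ge 1$ by a partial‑summation argument.

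First I would record the identity
\begin{equation*}
R^n_{j,k}=\sum_{l=0}^{n-j+1}\Psi_{j,l,n}\,m_{k+l+1},\qquad \Psi_{j,l,n}:=\frac{\Phi_{j,l,n}}{m_{l+1}}=\Big(\prod_{i=0}^{l-1}\gamma_{j+i}L_{j+i,n}\Big)L_{j+l,n}.
\end{equation*}
Since a determinant is linear in its first row and only the first row of $W_x^{j,n}$ depends on $x$, one has $|U_k^rW^{j,n}|=\int x^k|W_x^{j,n}|\,Q(dx)$, so that $R^n_{j,k}=\int x^k\frac{|W_x^{j,n}|}{|W^{j,n}|}Q(dx)$; substituting the expansion (\ref{aanx}) of $\frac{|W_x^{j,n}|}{|W^{j,n}|}$ and integrating term by term against $x^kQ(dx)$ yields the displayed formula. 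For $k=1$ the right‑hand side is exactly $R^n_j=\sum_l\Psi_{j,l,n}m_{l+2}$, which by Lemma \ref{gk1} increases strictly in $n$ to a limit $<1$; and by Corollary \ref{lphi} each coefficient $\Psi_{j,l,n}$ is non‑increasing in $n$ (identically zero when $\gamma_{j+l}=\infty$, strictly decreasing otherwise).

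Next I would compare $R^{n+1}_{j,k}$ with $R^n_{j,k}$. Write $m_{k+l+1}=m_{l+2}w_l$ with $w_l:=m_{k+l+1}/m_{l+2}\in(0,1]$; the key point is that $(w_l)_{l\ge 0}$ is non‑decreasing, because $w_{l+1}\ge w_l$ is equivalent to $m_{k+l+2}/m_{k+l+1}\ge m_{l+3}/m_{l+2}$, which follows by iterating H\"older's inequality (\ref{holder}) since $k+l+1\ge l+2$. (This is the only place $k\ge1$ enters, and rightly so: for $k=0$ one has $w_l\equiv1$ and $R^n_{j,0}\equiv1$.) Setting $d_l:=\Psi_{j,l,n+1}-\Psi_{j,l,n}$, extended by $\Psi_{j,\cdot,n}\equiv0$ outside its range, we get $d_l\le0$ for $l\le n-j+1$, $d_{n-j+2}\ge0$, $d_l=0$ beyond, and $R^{n+1}_{j,k}-R^n_{j,k}=\sum_l d_l m_{l+2}w_l$ with $\sum_l d_l m_{l+2}=R^{n+1}_{j,1}-R^n_{j,1}>0$ by Lemma \ref{gk1}. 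Introducing the tail sums $S_i:=\sum_{l\ge i}d_l m_{l+2}$, one checks $S_i\ge R^{n+1}_{j,1}-R^n_{j,1}>0$ for $0\le i\le n-j+2$ (the truncated part $\sum_{l<i}d_lm_{l+2}$ being $\le0$) and $S_i=0$ afterwards, whence partial summation gives
\begin{equation*}
R^{n+1}_{j,k}-R^n_{j,k}=S_0 w_0+\sum_{l\ge1}S_l(w_l-w_{l-1})\ \ge\ w_0\big(R^{n+1}_{j,1}-R^n_{j,1}\big)\ >\ 0,
\end{equation*}
using $w_l-w_{l-1}\ge0$, $S_l\ge0$ and $w_0=m_{k+1}/m_2>0$. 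Hence $R^n_{j,k}$ increases strictly in $n$; since $R^n_{j,k}\le R^n_{j,1}<1$ (as $w_l\le1$) it is bounded, so it converges, necessarily to the limit $R_{j,k}$ of Lemma \ref{rl}.

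The determinant bookkeeping and the H\"older estimates are routine; the step I expect to be the main obstacle is precisely this passage from $k=1$ to general $k$, namely seeing that re‑weighting the non‑increasing coefficients $\Psi_{j,l,n}$ by the \emph{increasing} sequence $(w_l)$ preserves the net increase known for $k=1$. The partial summation above does this, but one must be careful about the truncation of the sums — the extra coefficient $\Psi_{j,n-j+2,n+1}$ that surfaces when $n$ is incremented — and about the degenerate cases $\gamma_{j+l}=\infty$, where the corresponding $\Psi_{j,l,\cdot}$ vanishes identically and simply drops out of every estimate.
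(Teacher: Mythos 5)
Your proposal is correct, and it reaches the conclusion by a genuinely different route from the paper. The paper identifies $R^{n-1}_{j,k}$ with the ratio of moments $\int y^{k+1}P_j^{n-1}(dy)\big/\int y\,P_j^{n-1}(dy)$ via Lemma \ref{matrix}, expands $P_j^n$ as a convex combination of the $Q^l$ with coefficients $C^n_{j,l}$ (Corollary \ref{pjn}), and then applies the elementary ratio-comparison Lemma \ref{simplelemma}, whose hypotheses ($a_l<a_n$, $b_l<b_n$, $a_l/b_l<a_n/b_n$, mass shifting to the top index) are checked from H\"older's inequality and Corollary \ref{lphi}. You instead write $R^n_{j,k}=\sum_l\Psi_{j,l,n}m_{k+l+1}$ directly (your identity $|U_k^rW^{j,n}|=\int x^k|W_x^{j,n}|\,Q(dx)$ and the substitution into (\ref{aanx}) are both valid), factor $m_{k+l+1}=m_{l+2}w_l$ with $(w_l)$ nondecreasing in $l$ by H\"older, and bootstrap the known case $k=1$ to general $k$ by Abel summation, the strict inequality surviving through the term $S_0w_0=w_0\bigl(R^{n+1}_{j,1}-R^n_{j,1}\bigr)>0$. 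Both arguments ultimately rest on the same two ingredients — the monotonicity in $n$ of the coefficients $\Phi_{j,l,n}$ from Corollary \ref{lphi}, and H\"older's inequality (\ref{holder}) — but yours dispenses with Lemma \ref{simplelemma} and the probabilistic reading of $R^n_{j,k}$, at the price of the partial-summation bookkeeping; the paper's version isolates the combinatorial content in a reusable appendix lemma, while yours makes the reduction from general $k$ to $k=1$ explicit, which is a nice structural insight. Your handling of the truncation term $d_{n-j+2}=\Psi_{j,n-j+2,n+1}\geq 0$ and of the degenerate indices with $\gamma_{j+l}=\infty$ (where $\Psi_{j,l,\cdot}\equiv 0$) is also consistent with the paper's conventions.
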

\begin{proof}
The case $k=1$ has been proved by Lemma \ref{gk1}. We consider here $k\geq 2$. Without loss of generality we let $j=1.$ The idea is to apply Lemma \ref{simplelemma} in the Appendix. Following the notations in Lemma \ref{simplelemma} we set 
$$a_l=\int y^{k+1}Q^l(dy)=\frac{m_{l+k+1}}{m_l},\quad b_l=\int yQ^l(dy)=\frac{m_{l+1}}{m_l}, \quad\forall \,0\leq l\leq n; $$
and 
$$c_l=C^{n-1}_{0,l},\,\,\, c'_l=C^n_{0,l}, \,\,\forall \,0\leq l\leq n-1; \quad c_n=0,\,\,\, c'_n=C^n_{0,n}.$$
Then by the definition of $R_{1,k}^n$ and Lemma \ref{matrix}
\begin{equation}\label{rp}R^{n-1}_{1,k}=\frac{|U_k^rW^{n-1}|}{|W^{n-1}|}=\frac{\int y^{k+1} P_{0}^{n-1}(dy)}{\int y P_{0}^{n-1}(dy)}.\end{equation}
So by (\ref{pmn})$$R^{n-1}_{1,k}=\frac{\sum_{l=0}^{n}c_la_l}{\sum_{l=0}^{n}c_lb_l},\quad R^{n}_{1,k}=\frac{\sum_{l=0}^{n}c'_la_l}{\sum_{l=0}^{n}c'_lb_l}.$$
For any  $n\geq 1$, by H\"older's inequality
$$\frac{a_l}{b_l}=\frac{m_{l+k+1}}{m_{l+1}}<\frac{ m_{n+k+1}}{m_{n+1}}=\frac{a_{n}}{b_{n}}, $$
and $$a_l=\frac{m_{l+k+1}}{m_{l}}<\frac{ m_{n+k+1}}{m_{n}}=a_{n}, \quad b_l=\frac{m_{l+k+1}}{m_{l}}<\frac{ m_{n+k+1}}{m_{n}}=b_{n},\quad \forall \,0\leq l\leq n-1. $$
Moreover $a_0,\cdots, a_n, b_0,\cdots, b_n$ are all strictly positive numbers.

Next we consider the $c_l$'s and $c_l'$'s.  Note that $c_0=c_0'=b_1$. By Corollary \ref{lphi}, for $1\leq l\leq n-1,$ if $c_l>0$, then $c_l>c_l'$, otherwise $c_l=c_l'=0.$ Moreover $c_n'=C_{0,n}^n=(1-b_1)\frac{m_{n+1}}{m_1}\prod_{i=0}^{n-1}\gamma_iL_{i,n}>0$. So we have the following 
$$c_i\geq c'_i\geq 0, \quad \forall \,0\leq l\leq n-1;\quad \,\,0=c_n<c'_n;\quad \,\,\sum_{i=1}^nc_i=\sum_{i=1}^nc'_i=1.$$
Now we apply Lemma \ref{simplelemma} to conclude. 
%,  
%This result generalizes the strict monotonicity in the case $k=1$ shown in Remark \ref{llimit}. 
\end{proof}
\begin{proof}[\bf Proof of Lemma \ref{rl}.]
As we have already proved Corollary \ref{lphi} and \ref{ukn}, it remains to tackle (\ref{rik}) and (\ref{boundrjlj}). 
Expanding $U_k^rW^{j,n}$ and $W^{j,n}$ on the first column, we get
$$R^n_{j,k}=\frac{|U_k^rW^{j,n}|}{|W^{j,n}|}=\frac{m_{k+1}|W^{j+1,n} |+\gamma_j  |U_{k+1}^rW^{j+1,n}|}{m_{k+1}|W^{j+1,n} |+\gamma_j  |U^rW^{j+1,n}|}=\frac{m_{k+1}+\gamma_jR^n_{j+1,k+1}}{m_1+\gamma_jR^n_{j+1}}.$$
Letting $n\to\infty$, we obtain (\ref{rik}). 

To show (\ref{boundrjlj}), without loss of generality, let $j=1$.   By Lemma \ref{gk1}
$$m_1<R_{2,1}^n<1.$$
As $R_{2,1}^n$ decreases to $R_{2,1}$, we have also $R_{2,1}<1$ which gives the strict upper bound for $R_{2,1}$. Using (\ref{laa}), the above display yields
\begin{equation}\label{lbd}
\frac{\gamma_1}{m_1+\gamma_1}< \gamma_1L_{1,n}< \frac{\gamma_1}{m_1(1+\gamma_1)}.
\end{equation}
Since $\gamma_1L_{1,n}$ decreases strictly to $\gamma_1L_1$, we obtain the following using again (\ref{laa})
$$\gamma_1L_1=\frac{\gamma_1}{m_1+\gamma_1R_{2,1}}<\frac{\gamma_1}{m_1(1+\gamma_1)}.$$
Then we get $R_{2,1}>m_1$. Moreover as $R_{2,1}<1,$
$$\gamma_1L_1=\frac{\gamma_1}{m_1+\gamma_1R_{2,1}}>\frac{\gamma_1}{m_1+\gamma_1}.$$
So we have found the strict lower and upper bounds for $R_{2,1}$ and $\gamma_1L_1$.

\end{proof}

\subsection{Proofs of Theorem \ref{h} and Corollary \ref{h=g}}
For measures $u,v\in M_1$, we write 
$$u\leq v$$
if $u([0,x])\geq v([0,x])$ for any $x\in[0,1]$. 
\begin{proof}[\bf Proof of Theorem \ref{h}]
Note that $Q^j\leq  Q^{j+1}$ for any $j$. Then using Corollary \ref{pjn} and Lemma \ref{rl}, $P_j^n\leq P_j^{n+1}$. So $P_j^n$ converges at least weakly to a limit $\H_j$. The weak convergence allows to obtain (\ref{hj+1}) from (\ref{revp}). Expanding  (\ref{hj+1}), we obtain
\begin{align}\label{hjhj}
\H_j(dx)&=H_j\delta_{S_Q}(dx)+b_{j+1}Q(dx)+\sum_{l=1}^{\infty}(1-b_{j+1})\Phi_{j+2,l-1}Q^{l}(dx),\,  \,0\leq j< n.&
\end{align}
where $H_j=1-b_{j+1}-\sum_{l=1}^{\infty}(1-b_{j+1})\Phi_{j+2,l-1}.$
 To prove (\ref{hl}), we firstly use 
(\ref{pphi}) and definition (\ref{lr}) to obtain that 
\begin{align*}\int xP_j^n(dx)&=(1-b_{j+1})\frac{|U^rW^{j+2,n}|}{|W^{j+2,n}|}
+b_{j+1}m_1&\\
&=(1-b_{j+1})R_{j+2}^n+b_{j+1}m_1=b_{j+1}(\gamma_{j+1}R_{j+2}^n+m_1)=\frac{b_{j+1}}{L_{j+1,n}}.&\end{align*}
A reformulation of the above equality reads 
$$\frac{1-b_{j+1}}{\int yP_j^n(dy)}=\gamma_jL_{j+1,n}.$$
Using the convergences as $n\to\infty$, we obtain (\ref{hl}). 
\end{proof}
\begin{proof}[\bf Proof of Corollary \ref{h=g}]
By (\ref{hjh}), $\widetilde \H_j$ is equal in distribution for all $j$'s. By (\ref{hj+1}), $\widetilde \H_j$ is an invariant measure on $[0,S_Q]$ with $S_{\widetilde \H_j}=S_Q$ a.s.. Recall that $\I_{j,S_Q}$ is also invariant on $[0,S_Q]$. Then by Theorem 4 in \cite{Y20}, $\widetilde \H_j\stackrel{d}{=}\I_{j,S_Q}.$  By (\ref{gf}) and (\ref{hj+1}), for both sequences, the multi-dimensional distributions are determined in the same way by one dimensional distribution. So the two sequences have the same multi-dimensional distributions, and the multi-dimensional distributions are consistent in each sequence.  By Kolmogorov's extension theorem (Theorem 5.16, \cite{K97}), consistent multi-dimensional distributions determine the distribution of the sequence, which yields the identical distribution for both two sequences. 
\end{proof}
%The convergence is strong if and only if 
%$$b_{j+1}+\sum_{l=1}^{\infty}(1-b_{j+1})\Phi_{j+2,l-1}=1.$$

\subsection{Proof of Corollary \ref{con}}

\begin{proof}[\bf Proof of Corollary \ref{con}]
Recall that 
$\E\left[\frac{1-\beta}{\int y\I_Q} \right]$ exists and does not depend on the joint law of 
$\beta, \I_Q$. Using (\ref{hl}) in the first random model, together with Corollary \ref{h=g}, we can rewrite Theorem \ref{conY20} into Corollary \ref{con}.  

\end{proof}

\subsection{Proof of Lemma \ref{key}}

\begin{proof}[\bf Proof of Lemma \ref{key}]
Since $(\xi_1,\cdots,\xi_n)$ is exchangeable, we can directly take a symmetric function $f$ and prove the inequality under  $f_{x_1x_2}\leq 0.$
For any $a>b$, we first show that
$$f(a,\underbrace{b,\cdots,b}_{n-1})+f(b,\underbrace{a,\cdots,a}_{n-1})\geq f(\underbrace{a,\cdots,a}_{n})+f(\underbrace{b,\cdots,b}_{n}),$$
which is proved as follows. 
\begin{align*}
&f(\underbrace{a,\cdots,a}_{n})+f(\underbrace{b,\cdots,b}_{n})-f(a,\underbrace{b,\cdots,b}_{n-1})-f(b,\underbrace{a,\cdots,a}_{n-1})&\\
=&\int_b^a(f_{x_1}(x_1, \underbrace{a,\cdots,a}_{n-1})-f_{x_1}(x_1, \underbrace{b,\cdots,b}_{n-1}))dx_1&\\
=&\sum_{i=2}^n\int_b^a(f_{x_1}(x_1,\underbrace{b,\cdots,b}_{i-2},a,\underbrace{a,\cdots,a}_{n-i})-f_{x_1}(x_1,\underbrace{b,\cdots,b}_{i-2},b,\underbrace{a,\cdots,a}_{n-i}))dx_1&\\
=&\sum_{i=2}^n\int_b^a\int_b^af_{x_1x_i}(x_1,\underbrace{b,\cdots,b}_{i-2},x_i,\underbrace{a,\cdots,a}_{n-i})dx_1d{x_i}&\\
=&\sum_{i=2}^n\int_b^a\int_b^af_{x_1x_2}(x_1,x_2,\underbrace{b,\cdots,b}_{i-2},\underbrace{a,\cdots,a}_{n-i})dx_1d{x_2}\leq 0&
\end{align*}
Applying the proved result, for any $1\leq i\leq n-1$, 
\begin{align*}&f(\underbrace{\xi_1,\cdots,\xi_1}_i,\xi_{i+1},\xi_{i+2},\cdots,\xi_n)+f(\underbrace{\xi_{i+1},\cdots,\xi_{i+1}}_i,\xi_{1},\xi_{i+2},\cdots,\xi_n)&\\
\geq& f(\underbrace{\xi_1,\cdots,\xi_1}_{i+1},\xi_{i+2},\cdots,\xi_n)+f(\underbrace{\xi_{i+1},\cdots,\xi_{i+1}}_{i+1},\xi_{i+2},\cdots,\xi_n).&\end{align*}
Using the above inequality, we obtain\begin{align*}
&\E[f(\underbrace{\xi_1,\cdots,\xi_1}_i,\xi_{i+1},\xi_{i+2},\cdots,\xi_n)]&\\
=&\frac{1}{2}\E[f(\underbrace{\xi_1,\cdots,\xi_1}_i,\xi_{i+1},\xi_{i+2},\cdots,\xi_n)+f(\underbrace{\xi_{i+1},\cdots,\xi_{i+1}}_i,\xi_{1},\xi_{i+2},\cdots,\xi_n)]&\\
\geq& \frac{1}{2}\E[f(\underbrace{\xi_1,\cdots,\xi_1}_{i+1},\xi_{i+2},\cdots,\xi_n)+f(\underbrace{\xi_{i+1},\cdots,\xi_{i+1}}_{i+1},\xi_{i+2},\cdots,\xi_n)]&\\
=&\E[f(\underbrace{\xi_1,\cdots,\xi_1}_{i+1},\xi_{i+2},\cdots,\xi_n)].&
\end{align*}
Letting $i$ travel from $1$ to $n-1$,  we prove the lemma. 

\end{proof}

\subsection{Proof of Theorem \ref{gammal3}}

Define
$$\Psi_n:=\frac{\prod_{j=1}^{n}\gamma_j}{|W^n|}, \quad n\geq 1.$$
\begin{lemma}\label{rn}
%\begin{itemize}
%\item If $\E[\ln \gamma_1L_1]\geq  0$, then $\E[\ln R_n]$ increases and 
%\item If $\E[\ln \gamma_1L_1]< 0$, then $\E[\ln R_n]$ converges to $-\infty$.
%\end{itemize}
For the three models, we have 
$$\lim_{n\to\infty }\frac{\ln \overline \Psi_n}{n}=\ln \gamma \overline L, \quad \lim_{n\to\infty }\frac{\ln \widehat \Psi_n}{n}=\ln \Gamma \widehat L,\quad \lim_{n\to\infty }\E\left[\frac{\ln \widetilde \Psi_n}{n}\right]=\E\left[\ln \Gamma_1 \widetilde L_{1}\right].$$  \end{lemma}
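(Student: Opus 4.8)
The plan is to exploit the factorization of $\Psi_n$ into a telescoping product. Writing $\Psi_n = \prod_{j=1}^n \gamma_j / |W^n|$ and recalling $L_{j,n} = |W^{j+1,n}|/|W^{j,n}|$, one gets $|W^n| = |W^{1,n}| = \prod_{j=1}^{n}\big(|W^{j,n}|/|W^{j+1,n}|\big)\cdot |W^{n+1,n}| = m_1\prod_{j=1}^{n} L_{j,n}^{-1}$, so that
\begin{equation}\label{psifactor}
\Psi_n = \frac{1}{m_1}\prod_{j=1}^{n}\gamma_j L_{j,n}.
\end{equation}
Hence $\frac{1}{n}\ln \Psi_n = \frac{1}{n}\sum_{j=1}^n \ln(\gamma_j L_{j,n}) - \frac{1}{n}\ln m_1$. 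By Lemma \ref{rl}, each factor $\gamma_j L_{j,n}$ decreases in $n$ to $\gamma_j L_j$, and by \eqref{boundrjlj} all factors are bounded in $(\gamma_j/(m_1+\gamma_j),\,\gamma_j/(m_1(1+\gamma_j)))$, so $\ln(\gamma_j L_{j,n})$ is uniformly bounded when $\gamma_j$ stays away from $0$ (and one checks $\gamma_j L_{j,n}$ stays bounded away from $0$ as well, using $R_{j+1}^n<1$). The first task is therefore to pass from the Cesàro average of $\ln(\gamma_j L_{j,n})$ — which has the awkward feature that the $j$th term still depends on the cutoff $n$ — to the Cesàro average of $\ln(\gamma_j L_j)$. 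Since $\gamma_j L_{j,n}$ is decreasing in $n$ with limit $\gamma_j L_j$, and $\gamma_j L_{j,n}$ depends only on $\gamma_j,\dots,\gamma_n$, a shift argument works: $\gamma_j L_{j,n}$ and $\gamma_{j+k}L_{j+k,n+k}$ have the same structure, so one can bound $|\frac{1}{n}\sum_{j=1}^n \ln(\gamma_j L_{j,n}) - \frac{1}{n}\sum_{j=1}^n \ln(\gamma_j L_j)|$ and show it vanishes, using monotone convergence for each fixed $j$ together with the uniform bounds to control the tail.

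Once the cutoff dependence is removed, the three cases split according to the nature of $(\gamma_j)$. In Kingman's model all $\gamma_j = \gamma = \gamma \overline L/\overline L$ are constant (here I use the overline notation of the excerpt), so $\gamma_j L_j = \gamma \overline L$ for every $j$ and $\frac{1}{n}\ln\overline\Psi_n \to \ln \gamma\overline L$ immediately; the identification $\gamma\overline L = (1-b)/\int y\,\K_Q(dy)$ is \eqref{hl} applied to Kingman's model. In the second random model $\gamma_j = \Gamma$ for every $j$ with $\Gamma$ random but fixed, so conditionally on $\Gamma$ we are in Kingman's model and $\frac{1}{n}\ln\widehat\Psi_n \to \ln\Gamma\widehat L$ a.s., with $\Gamma\widehat L = (1-\beta)/\int y \A_Q(dy)$ again by \eqref{hl}. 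In the first random model $(\gamma_j) = (\Gamma_j)$ is i.i.d., hence $(\Gamma_j \widetilde L_j)_{j\ge 1}$ is a stationary ergodic sequence — it is a fixed measurable function of the ergodic shift on $(\Gamma_{j+1},\Gamma_{j+2},\dots)$, as in \eqref{hjh} — so Birkhoff's ergodic theorem gives $\frac{1}{n}\sum_{j=1}^n \ln(\Gamma_j\widetilde L_j)\to \E[\ln\Gamma_1\widetilde L_1]$ a.s.\ and in $L^1$; taking expectations and invoking the $L^1$ convergence yields $\frac{1}{n}\E[\ln\widetilde\Psi_n]\to \E[\ln\Gamma_1\widetilde L_1]$. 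The integrability needed for Birkhoff follows from \eqref{boundrjlj}: $\ln\Gamma_1\widetilde L_1$ is squeezed between $\ln(\Gamma_1/(m_1+\Gamma_1))$ and $\ln(\Gamma_1/(m_1(1+\Gamma_1)))$, both of which have finite expectation provided $\E|\ln\Gamma_1|<\infty$; and in fact $|\ln\Gamma_1\widetilde L_1|\le |\ln\Gamma_1| + C$ need only be handled with care when $\Gamma_1$ can be $0$ or $\infty$ (i.e.\ $\beta$ near $1$ or $0$), where one uses that $\ln\Gamma_1\widetilde L_1 = -\ln R_2$ when $\Gamma_1=\infty$ and $m_1 < R_2 < 1$ keeps it bounded, and the lower tail as $\Gamma_1\to 0$ is controlled because $\ln\Gamma_1\widetilde L_1 \le \ln(1/m_1)$.

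The main obstacle I anticipate is precisely the interchange between the $n$-dependence inside $L_{j,n}$ and the limit: one wants $\frac{1}{n}\sum_{j=1}^n \ln(\gamma_j L_{j,n}) \to$ (the ergodic/constant average of $\ln(\gamma_j L_j)$), but the summand for index $j$ is evaluated at cutoff $n$, not at its own limit. The clean way around this is to note $\gamma_j L_{j,n}$ is decreasing in $n$, hence for any fixed $N$ and all $n\ge N$ we have $\gamma_j L_{j,n}\le \gamma_j L_{j,N}$ for $j\le N$, giving an upper Cesàro bound; for the lower bound one uses $\gamma_j L_{j,n}\ge \gamma_j L_j$ always. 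Combining with the uniform two-sided bounds \eqref{boundrjlj} to dispose of the $j$ close to $n$, and then sending $N\to\infty$, pins the limit. A secondary nuisance is the degenerate cases $\beta\in\{0,1\}$ at the boundary of the support of $\beta$, which must be handled by the limiting conventions already adopted in the excerpt (e.g.\ $\gamma_j L_{j,n}=0$ and $\gamma_j L_j = 1/R_{j+1}$ when $\gamma_j=\infty$), so that \eqref{psifactor} and the ergodic average remain valid, possibly taking values $\pm\infty$ in a consistent way.
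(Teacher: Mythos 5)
Your factorization $\Psi_n=\tfrac{1}{m_1}\prod_j\gamma_jL_{j,n}$ is exactly the paper's starting point, so the core of the argument is the same; the difference is in how you identify the Ces\`aro limit. The paper takes expectations \emph{first} and uses the distributional identity $\Gamma_j\widetilde L_{j,n}\stackrel{d}{=}\Gamma_1\widetilde L_{1,n-j+1}$ (an exact identity $\gamma L_{j,n}=\gamma L_{1,n-j+1}$ in Kingman's model and, conditionally on $\beta$, in the second random model), which turns the sum into $\sum_{m}\E[\ln(\Gamma_1\widetilde L_{1,m})]$ --- the Ces\`aro mean of a single sequence that converges monotonically by Lemma \ref{rl}. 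That is all the statement requires, since only $\E[\ln\widetilde\Psi_n]/n$ is claimed for the first random model. Your Birkhoff route is heavier but legitimate (it even gives a.s.\ convergence), and your integrability discussion is sound: the upper bound $\gamma_jL_{j,n}<1/m_1$ makes the positive part integrable, so the ergodic theorem applies even when $\E[\ln\Gamma_1\widetilde L_1]=-\infty$.

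One concrete correction to your treatment of the ``main obstacle'': the truncation you describe --- fix $N$, bound the terms with $j\le N$ by $\gamma_jL_{j,N}$, and dispose of the remaining $n-N$ terms by the uniform bound --- does not pin the limit, because after letting $n\to\infty$ the first block contributes nothing and the uniform bound on the other $n-N$ terms only yields $\limsup\le-\ln m_1$. The truncation must be indexed by the distance $n-j$ to the right endpoint, not by $j$: for $j\le n-K$ use monotonicity to get $\gamma_jL_{j,n}\le\gamma_jL_{j,j+K}$, note that $\bigl(\ln(L_{j,j+K}/L_j)\bigr)_j$ is stationary, ergodic and valued in $[0,-\ln m_1]$ so its Ces\`aro mean converges to $\E[\ln(L_{1,1+K}/L_1)]\to0$ as $K\to\infty$, and observe that the $K$ remaining terms are a vanishing fraction, each uniformly bounded. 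With that reindexing your argument closes; alternatively, passing to expectations as the paper does removes the issue entirely.
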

\begin{proof}
We prove only the case in the first random model. Note that 
\begin{align*}\E[\ln \widetilde \Psi_n]&=\E\left[\ln\left(\frac{1}{m_1}\prod_{j=1}^{n-1} \Gamma_j \frac{|\widetilde  W^{j+1,n}|}{|\widetilde  W^{j,n}|}\right)\right]&\\
&=\sum_{j=1}^{n-1}\E[\ln(\Gamma_j \widetilde L_{j,n})]-\ln m_1=\sum_{j=1}^{n-1}\E[\ln(\Gamma_1 \widetilde L_{1,n-j+1})]-\ln m_1.&\end{align*}
Here we use the fact that $\Gamma_j \widetilde L_{j,n}\stackrel{d}{=}\Gamma_1 \widetilde L_{1,n-j+1}.$
Then we apply Lemma \ref{rl}.  
\end{proof}

\begin{lemma}\label{rconv}

% $1). \ln R_n$ is strictly increasing and strictly concave in each $\gamma_i, 1\leq i\leq n-1$. 
%\\
$\ln \Psi_n$ is strictly concave down in every $b_j, 1\leq j\leq n$. 

%In consequence, letting $B_\gamma^n$ (resp. $B_{\beta}^n$) be $R_n$ with each $\gamma_i$ (resp. $\beta_i$) replaced by $E[\gamma_1]$ (resp. $\E[\beta_1]$), we have
%$$\E[\ln R_n]<\min\{\E[\ln B_\betW^n],\E[\ln B_\gamma^n]\}=\E[\ln B_\betW^n]$$
%which entails 
%$$\lim_{n\to\infty}\E[\ln R_n]\leq \lim_{n\to\infty}\min\{\E[\ln B_\betW^n],\E[\ln B_\gamma^n]\}=\lim_{n\to\infty}\E[\ln B_\betW^n].$$
\end{lemma}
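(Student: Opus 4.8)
The claim is that $\ln \Psi_n = \ln \prod_{j=1}^n \gamma_j - \ln |W^n|$ is strictly concave as a function of each individual $b_j$ (with the other $b_l$'s held fixed). Since $\gamma_j = (1-b_j)/b_j$ depends only on $b_j$, the first summand $\sum_j \ln \gamma_j$ contributes $\ln(1-b_j) - \ln b_j$ in the $b_j$-variable, which is itself strictly concave. So it suffices to show that $-\ln|W^n|$, viewed as a function of $b_j$, is concave (not necessarily strictly) — equivalently that $\ln |W^n|$ is convex in $b_j$. I would record that $|W^n|$ is a polynomial, hence smooth, in each $\gamma_j$, and that $\gamma_j \mapsto b_j$ is a smooth bijection, so the statement is a genuine second-derivative computation.

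**Key steps.** First I would fix $j$ and expand $|W^n|$ along the $j$-th column (or row): by the bidiagonal-plus-first-row structure of $W^{j,n}$ built up through \eqref{laa} and the proof of Lemma \ref{matrix}, the determinant $|W^n|$ is \emph{affine} in $\gamma_j$, say $|W^n| = \alpha_j + \beta_j \gamma_j$ where $\alpha_j, \beta_j > 0$ are determinants of sub-blocks not involving $\gamma_j$ (this is exactly the mechanism of \eqref{laa}: $|W^{j,n}| = m_1|W^{j+1,n}| + \gamma_j |U^r W^{j+1,n}|$, and $\gamma_j$ appears only in the $j$-th row). Hence $\ln|W^n| = \ln(\alpha_j + \beta_j\gamma_j)$. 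Now substitute $\gamma_j = (1-b_j)/b_j$, so $\alpha_j + \beta_j\gamma_j = (\alpha_j b_j + \beta_j(1-b_j))/b_j = ((\alpha_j-\beta_j)b_j + \beta_j)/b_j$, and therefore
\[
\ln|W^n| = \ln\big((\alpha_j-\beta_j)b_j + \beta_j\big) - \ln b_j.
\]
The function $b_j \mapsto \ln\big((\alpha_j-\beta_j)b_j+\beta_j\big)$ is concave (composition of $\ln$ with an affine map), so $\ln|W^n|$ is the sum of a concave function and $-\ln b_j$, which is convex. That does not immediately give convexity of the sum. So instead I would compute the second derivative directly: with $A := \alpha_j - \beta_j$ and $g(b_j) := \ln(Ab_j + \beta_j) - \ln b_j$ one gets $g''(b_j) = -A^2/(Ab_j+\beta_j)^2 + 1/b_j^2 = 1/b_j^2 - A^2/(\alpha_j+\beta_j\gamma_j)^2$. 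Then $\ln\Psi_n = -\sum_l \ln b_l + \sum_l \ln(1-b_l) - \ln|W^n|$, so in the $b_j$ variable
\[
\frac{\partial^2}{\partial b_j^2}\ln\Psi_n = -\frac{1}{b_j^2} - \frac{1}{(1-b_j)^2} - g''(b_j) = -\frac{1}{(1-b_j)^2} + \frac{A^2}{(\alpha_j+\beta_j\gamma_j)^2}.
\]
It remains to show this is strictly negative, i.e. $|A|(1-b_j) < \alpha_j + \beta_j\gamma_j = \alpha_j + \beta_j(1-b_j)/b_j$. Multiplying by $b_j>0$, this is $|\alpha_j-\beta_j|b_j(1-b_j) < \alpha_j b_j + \beta_j(1-b_j)$. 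Since $0 \le b_j(1-b_j) \le \min(b_j, 1-b_j)$ and $\alpha_j,\beta_j>0$, we have $|\alpha_j-\beta_j| b_j(1-b_j) \le \max(\alpha_j,\beta_j)\min(b_j,1-b_j) \le \alpha_j b_j + \beta_j(1-b_j)$ with the final inequality strict because at least one of the two omitted cross-terms is positive (both $b_j$ and $1-b_j$ lie in $(0,1)$, $\alpha_j,\beta_j>0$). This gives strict concavity.

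**Main obstacle.** The only real work is the structural claim that $|W^n|$ is affine in $\gamma_j$ with strictly positive coefficients $\alpha_j,\beta_j$; everything after that is the elementary inequality above. The affineness follows because $\gamma_j$ occurs in exactly one row of the (essentially bidiagonal) matrix $W^{j,n}$, so a cofactor expansion along that row is linear in $\gamma_j$, and positivity of the coefficients is the already-recorded fact (just before Lemma \ref{rl}) that $|W^{j',n}|>0$ and $|U^r W^{j',n}|>0$. I should also note the edge cases: if $\gamma_j=\infty$ (i.e. $b_j=0$) the quantity is defined by limit and lies outside the open domain of differentiation, and for $j$ such that $1\le j\le n$ the cofactor blocks $W^{j+1,n}$, $U^r W^{j+1,n}$ are the relevant ones, with the convention $W^{n+1,n}=(m_1)$ handling $j=n$. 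With those remarks the proof is complete.
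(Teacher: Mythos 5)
Your structural setup is correct and matches the paper's: by \eqref{wgamma}, $|W^n|=\alpha_j+\beta_j\gamma_j$ with $\alpha_j=|W^{j-1}||W^{j+1,n}|$ and $\beta_j=\frac{d|W^n|}{d\gamma_j}$ both positive, and the lemma does reduce to a second-derivative computation in $b_j$. But there is an algebra slip that hides the real difficulty: $(\alpha_j-\beta_j)b_j+\beta_j=b_j(\alpha_j+\beta_j\gamma_j)$, not $\alpha_j+\beta_j\gamma_j$. Carrying the computation through correctly (with $A=\alpha_j-\beta_j$) gives
$\frac{\partial^2}{\partial b_j^2}\ln\Psi_n=-\frac{1}{(1-b_j)^2}+\frac{A^2}{(Ab_j+\beta_j)^2}=\frac{\alpha_j\bigl(A(1-2b_j)-\beta_j\bigr)}{(1-b_j)^2(\alpha_jb_j+\beta_j(1-b_j))^2}$,
so the inequality you must prove is $|\alpha_j-\beta_j|(1-b_j)<\alpha_jb_j+\beta_j(1-b_j)$, \emph{without} the extra factor $b_j$ on the left that your "multiplying by $b_j$" step introduces. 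This stronger inequality is simply false for arbitrary positive $\alpha_j,\beta_j$: take $\alpha_j=100$, $\beta_j=1$, $b_j=0.1$, giving $89.1\not<10.9$. So positivity of the two cofactors cannot suffice, and the weaker inequality you do verify does not imply the one you need.

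The missing ingredient is exactly the content of the paper's Lemma \ref{aga}: one needs $\beta_j>\alpha_j$, i.e. $\frac{d|W^n|}{d\gamma_j}>|W^{j-1}||W^{j+1,n}|$ (inequality \eqref{<1n}), which is equivalent to the lower bound $\frac{d|W^n|}{d\gamma_j}/|W^n|>b_j$ and immediately yields $A(1-2b_j)-\beta_j<0$. The paper proves this by expanding both determinants combinatorially (the type-$(*)$ expansion of Appendix D) and invoking H\"older's inequality \eqref{holder} for the moments, in the form $m_{a}m_{b}<m_{a+b}$. Your argument never uses any property of the sequence $(m_k)$, which is a reliable sign that something essential is missing: the lemma is not a fact about general matrices of this positive bidiagonal-plus-first-row shape. (A secondary remark: the opening claim that $\ln\gamma_j=\ln(1-b_j)-\ln b_j$ is strictly concave in $b_j$ is false for $b_j<1/2$, since its second derivative is $b_j^{-2}-(1-b_j)^{-2}>0$ there; you abandon that route, but it should not be asserted.)
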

\begin{proof}
By basic computations we obtain for $b_j\in(0,1)$, 
%$$\frac{\partial \ln M_n}{\partial b_j}=-\left(1/\gamma_j-\frac{d|W^n|}{d\gamma_j}/|W^n|\right)/b_j^2<0$$
%and 
$$\frac{\partial^2 \ln \Psi_n}{\partial b_j^2}=\frac{1}{b_j^4}\left(1/\gamma_j-\frac{d|W^n|}{d\gamma_j}/|W^n|\right)\left(2b_j-1/\gamma_j-\frac{d|W^n|}{d\gamma_j}/|W^n|\right).$$
By Lemma \ref{aga} in the Appendix, $\frac{\partial^2 \ln \Psi_n}{\partial b_j^2}<0.$
\end{proof}

\begin{proof}[\bf Proof of Theorem \ref{gammal3}]
To prove (\ref{3}), we can use Lemma \ref{rn} and show instead 
\begin{equation}\label{3m}\E[\ln \widehat \Psi_n]\leq \E[\ln \widetilde \Psi_n]\leq \ln \overline \Psi_n.\end{equation} 
For any $1\leq j<i\leq n$, due to Proposition \ref{wij} in the Appendix, $$\frac{\partial^2\ln \Psi_n}{\partial b_i\partial b_j}=-\frac{\partial^2\ln |W^n|}{\partial b_i\partial b_j}<0.$$
Then we apply Lemma \ref{key} to obtain the first inequality of (\ref{3m}). Next we apply Lemma \ref{rconv} and Janson's inequality for the second inequality of (\ref{3m}). To prove (\ref{bargammal}), we use (\ref{hl}), and Theorem \ref{King}.
\end{proof}

\subsection{Proof of Theorem \ref{randet}}
We need two preparatory results before proving the theorem. 
\begin{lemma}\label{mono}
For any $k,n$, $R^n_{1,k}$ is strictly concave down in every $b_i$, $1\leq i\leq n$. %The same convexity holds for $R_k$.
%In consequence, $H_{0}$ is increasing (resp. decreasing) and concave down in every $\gamma_j$ (resp. $b_j$), $j\geq 1$.
\end{lemma}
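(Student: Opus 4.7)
Since $\gamma_j = (1-b_j)/b_j$ appears only in the $(j+1,j)$-entry of both $W^n$ and $U_k^r W^n$, both $|W^n|$ and $|U_k^r W^n|$ are affine in $\gamma_j$. Writing $|W^n| = A + B\gamma_j$ and $|U_k^r W^n| = C + D\gamma_j$, substitution of $\gamma_j=(1-b_j)/b_j$ yields the M\"obius form
$$
R^n_{1,k}(b_j)=\frac{D+(C-D)b_j}{B+(A-B)b_j},
$$
whose denominator equals the positive quantity $b_j|W^n|$. A direct computation gives $\partial^2 R^n_{1,k}/\partial b_j^2 = -2(A-B)(CB-DA)/(B+(A-B)b_j)^3$, so strict concavity on $(0,1)$ reduces to $(A-B)(CB-DA)>0$; the plan is to prove both $A-B<0$ and $CB-DA<0$.

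Setting $\gamma_j=0$ turns $W^n$ into a block upper-triangular matrix with diagonal blocks $W^{1,j-1}$ and $W^{j+1,n}$, so $A=|W^{1,j-1}|\cdot|W^{j+1,n}|$ and $C=|U_k^rW^{1,j-1}|\cdot|W^{j+1,n}|$; in particular $C/A=R^{j-1}_{1,k}$, consistent with the fact that $b_j=1$ forces $P^n_{j-1}=Q$ and collapses the chain to the sub-model of size $j-1$. The coefficients $B$ and $D$ are the $(j+1,j)$-minors of $W^n$ and $U_k^rW^n$ respectively; applying generalized Laplace expansion along the first $j-1$ columns (which vanish below row $j$) writes $B=|W^{j+1,n}|\sum_{l=1}^j(-1)^{l+j}\Delta_l\cdot R^n_{j+1,j-l+1}$, where $\Delta_l$ is the $(l,j)$-sub-minor of $W^{1,j-1}$. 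A short block analysis of $\Delta_l$ (removing row $l$ and column $j$ from $W^{1,j-1}$ produces a block-triangular matrix with upper block $W^{1,l-2}$ and a lower block upper-triangular with diagonal $-\gamma_l,\ldots,-\gamma_{j-1}$) gives the clean formula $(-1)^{l+j}\Delta_l=|W^{1,l-2}|\prod_{i=l}^{j-1}\gamma_i>0$.

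Matching this Laplace expansion of $B$ against the cofactor expansion $|W^{1,j-1}|=\sum_{l=1}^j(-1)^{l+j}m_{j-l+1}\Delta_l$ of $|W^{1,j-1}|$ along its last column yields
$$
A-B=|W^{j+1,n}|\sum_{l=1}^j(-1)^{l+j}\Delta_l\,\bigl(m_{j-l+1}-R^n_{j+1,j-l+1}\bigr).
$$
Each cofactor is strictly positive by the previous paragraph, and each $m_k-R^n_{j+1,k}<0$ strictly because $R^n_{j+1,k}$ is the $k$-th moment of the size-biased measure associated to $P^{n-j}_0$ of the sub-model $(b_{j+1},\ldots,b_n)$, which by Corollary \ref{pjn} is a mixture of $Q^1,Q^2,\ldots$ and hence strictly stochastically dominates $Q$ (for non-degenerate $Q$); therefore $A-B<0$. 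For $CB-DA<0$, using $C=R^{j-1}_{1,k}A$ reduces this to $D/B>R^{j-1}_{1,k}$, equivalently the strict monotonicity $R^n_{1,k}(b_j=0)>R^n_{1,k}(b_j=1)$. This monotonicity is the main obstacle, since size-biasing does not preserve stochastic order in general and so a direct probabilistic argument fails. My plan is to carry out the parallel Laplace expansion of $D$ — in which the $l=1$ term acquires $R^n_{j+1,k+j}$ in place of $R^n_{j+1,j}$ due to the $U_k^r$-shift of the first row, while for $l\ge 2$ the sub-minors $(U_k^rW^{1,j-1})[l,j]$ are further cofactor-expanded along their modified first row — and then to compare $D$ and $R^{j-1}_{1,k}B$ term-by-term using the log-convexity of $(m_l)_{l\ge 0}$ and the monotonicity of $k'\mapsto R^n_{j+1,k'}$ established above.
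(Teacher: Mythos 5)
Your setup is sound and, after unwinding, is algebraically identical to the paper's: with $|W^n|=A+B\gamma_j$ and $|U_k^rW^n|=C+D\gamma_j$ one has $\frac{dR^n_{1,k}}{d\gamma_j}=(DA-CB)/|W^n|^2$ and $b_j-\frac{d|W^n|}{d\gamma_j}/|W^n|=b_j(A-B)/|W^n|$, so your condition $(A-B)(CB-DA)>0$ is exactly the paper's condition $\frac{dR^n_{1,k}}{d\gamma_j}\left(b_j-\frac{d|W^n|}{d\gamma_j}/|W^n|\right)<0$. Your argument for $A-B<0$ — which is precisely the lower bound of Lemma \ref{aga} — via the block structure at $\gamma_j=0$, the Laplace expansion of the $(j+1,j)$-minor, and the inequality $R^n_{j+1,k'}>m_{k'}$, is a legitimate and rather more transparent alternative to the paper's combinatorial expansion over $\EE^n$ in Appendix D; the identities you state check out on small cases. (Strictness here, as in the paper, tacitly requires $Q$ not to be a point mass, so that H\"older's inequality (\ref{holder}) is strict.)

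The genuine gap is the second factor. The inequality $CB-DA<0$ is exactly the strict monotonicity of $R^n_{1,k}$ in $\gamma_j$ (equivalently, in $b_j$), which is Corollary \ref{uk} of the paper, and you do not prove it: you correctly flag it as ``the main obstacle'' and offer only a plan. This is not a routine verification. In the paper it rests on the identity (\ref{finitedev}) together with Corollary \ref{pushc}, which in turn depends on the log-supermodularity $\partial^2\ln|W^n|/\partial b_i\partial b_j>0$ of Proposition \ref{wij} — itself a nontrivial determinantal inequality proved by a delicate telescoping of the ratios $|W^{j+1,n}|/|W^n|$. Your proposed route (a parallel Laplace expansion of $D$ followed by a term-by-term comparison of $D$ with $R^{j-1}_{1,k}B$ using log-convexity of $(m_l)$) is not carried out, and it is not evident it closes term by term: the $l\geq 2$ terms of $D$ mix sub-minors of $U_k^rW^{1,j-1}$ with shifted moments of $W^{j+1,n}$ in a way that a single application of H\"older need not control, which is presumably why the paper routes the argument through Proposition \ref{wij} instead. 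As it stands the proof is incomplete at exactly the step where the paper invokes Corollary \ref{uk}.
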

\begin{proof}%We shall consider first $\gamma_i$. 
Let $b_i\in(0,1)$. Let $$f= |U_k^rW^n |, \, g=|W^n|.$$ 
So $R^n_{1,k}=\frac{f}{g}$. Let $f',f'',g',g''$ be derivatives with respect to $\gamma_i\in (0,\infty)$. 
Then by Corollary \ref{uk} in the Appendix
$$\frac{dR^n_{1,k}}{d\gamma_i}=\frac{f'g-fg'}{g^2}>0$$
%where the inequality is due to Corollary \ref{uk}. Note that
Notice  that $$\frac{g'}{g}>0,\quad \frac{f''}{g}=\frac{g''}{g}=0.$$
The above statements are not difficult to see if it is clear how $f,g$ can be computed. Or one can refer to Lemma \ref{type*} in the Appendix. 
Then we obtain 
$$\frac{d^2R^n_{1,k}}{d(\gamma_i)^2}=\frac{f''g-fg''}{g^2}-\frac{2g'}{g}\frac{f'g-fg'}{g^2}=-\frac{2g'}{g}\frac{dR^n_{1,k}}{d\gamma_i}<0.$$
%Thus the part for $\gamma_i$ is proved. 
Moreover, $$\frac{d\gamma_i}{db_i}=\frac{-1}{b_i^2}, \quad \frac{d^2\gamma_i}{d(b_i)^2}=\frac{2}{b_i^3}.$$
Then
%$$dR^n_{k}/d\gamma_i=\frac{f'g-fg'}{g^2}\frac{-1}{b_i^2}<0$$
%and 
\begin{align*}
\frac{d^2R^n_{1,k}}{d(b_i)^2}&=\left(\frac{-1}{b_i^2}\right)^2\frac{d^2R^n_{k}}{d(\gamma_i)^2}+\frac{2}{b_i^3}\frac{dR^n_{1,k}}{d\gamma_i}&\\
&=\frac{2(f'g-fg')}{g^2b_i^4}\left(b_i-\frac{g'}{g}\right)=\frac{2}{b_i^4}\frac{dR^n_{1,k}}{d\gamma_i}\left(b_i-\frac{g'}{g}\right)<0,\end{align*}
where the inequality is due to Lemma \ref{aga} in the Appendix. %The strict convexity of $R_{k}$ follows by letting $n$ go to infinity and using again Corollary \ref{uk}, Lemma \ref{aga}. %The part for $b_i$ is proved. 

%For the conclusion on $C_{0,*}$, we refer to Remark \ref{ci*}. 
\end{proof}

\begin{corollary}\label{hjj+1}
For $H_j$ defined in (\ref{hjhj}), we have 
\begin{equation}\label{hjb}\frac{H_j}{1-b_{j+1}}=S_Q\gamma_{j+2}L_{j+2}\frac{H_{j+1}}{1-b_{j+2}},\end{equation}
and if $Q(S_Q)=0$,
\begin{equation}\label{hj1-b}\frac{H_j}{1-b_{j+1}}=\lim_{k\to\infty}S_Q^{-k}R_{j+2,k}.\end{equation}
\end{corollary}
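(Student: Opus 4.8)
The plan is to derive both identities by translating the structural recursion for $\H_j$ (equation (\ref{hj+1}), equivalently the expansion (\ref{hjhj})) into relations about the masses $H_j$, using the explicit form of the coefficients $\Phi_{j,l}$ and Lemma \ref{rl}. For (\ref{hjb}), recall from (\ref{hjhj}) that
\[
\frac{H_j}{1-b_{j+1}}=\frac{1}{1-b_{j+1}}-\sum_{l=1}^{\infty}\Phi_{j+2,l-1},
\]
so I would first identify $\frac{1}{1-b_{j+1}}-1=\frac{b_{j+1}}{1-b_{j+1}}=\frac{1}{\gamma_{j+1}}$ and then show that the series $\sum_{l\geq 1}\Phi_{j+2,l-1}$ — which equals the total non-$\delta_{S_Q}$ non-$Q$ mass of $\H_j$ divided by $(1-b_{j+1})$ — obeys a one-step recursion in $j$ with multiplier $S_Q\gamma_{j+2}L_{j+2}$. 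Concretely, by the definition $\Phi_{j+2,l}=\bigl(\prod_{i=0}^{l-1}\gamma_{i+j+2}L_{i+j+2}\bigr)L_{j+2+l}m_{l+1}$ one can factor $\gamma_{j+2}L_{j+2}$ out of every term $\Phi_{j+2,l-1}$ with $l\geq 2$, reindex, and compare with $\sum_{l\geq 1}\Phi_{j+3,l-1}$; the bookkeeping of the $l=1$ term together with the mass balance $H_{j}+b_{j+1}m_0+\sum_l(1-b_{j+1})\Phi_{j+2,l-1}\cdot 1 = 1$ (total mass one) should close the identity, with the factor $S_Q$ entering because $Q^l([0,S_Q])=1$ and the "missing mass" is carried to $S_Q$. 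An alternative and probably cleaner route is to use (\ref{hj+1}) directly: since $Q(S_Q)=0$ is not yet assumed for (\ref{hjb}), evaluate the atom at $S_Q$ on both sides of $\H_{j}(dx)=(1-b_{j+1})\frac{x\H_{j+1}(dx)}{\int y\H_{j+1}(dy)}+b_{j+1}Q(dx)$, getting $H_j=(1-b_{j+1})\frac{S_Q}{\int y\H_{j+1}(dy)}H_{j+1}$, and then substitute $\frac{1-b_{j+2}}{\int y\H_{j+1}(dy)}=\gamma_{j+2}L_{j+2}$ from (\ref{hl}) to obtain $\frac{H_j}{1-b_{j+1}}=S_Q\gamma_{j+2}L_{j+2}\frac{H_{j+1}}{1-b_{j+2}}$ — this is the approach I would actually write down.

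For (\ref{hj1-b}), assuming $Q(S_Q)=0$, the idea is that the atom $H_j$ at $S_Q=\sup\supp Q$ can be recovered as a limit of high moments: for the non-atomic part $b_{j+1}Q+\sum_l(1-b_{j+1})\Phi_{j+2,l-1}Q^l$ one has $S_Q^{-k}\int y^k(\cdot)(dy)\to 0$ as $k\to\infty$ by dominated convergence (each $Q^l$ gives mass $m_{l+k+1}/(m_l S_Q^k)\to 0$ since $Q$ has no atom at $S_Q$, and the tail of the series is controlled by Corollary \ref{lphi}), whereas the $\delta_{S_Q}$ part contributes exactly $H_j$. Hence $\lim_{k\to\infty}S_Q^{-k}\int y^k\H_j(dy)=H_j$. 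Then I relate $\int y^k\H_j(dy)$ to $R_{j+2,k}$: iterating (\ref{hj+1}) once gives $\int y^{k}\H_j(dy)=(1-b_{j+1})\frac{\int y^{k+1}\H_{j+1}(dy)}{\int y\H_{j+1}(dy)}+b_{j+1}m_k$; using the matrix representation of $\H_{j+1}$ as the $n\to\infty$ limit of $P_{j+1}^n$, identity (\ref{rp}) shows $\frac{\int y^{k+1}P_{j+1}^{n}(dy)}{\int y P_{j+1}^{n}(dy)}=R^n_{j+2,k}$, which by Lemma \ref{rl} (or Corollary \ref{ukn}) increases to $R_{j+2,k}$; the term $b_{j+1}m_k$ is asymptotically negligible after multiplying by $S_Q^{-k}$ in the same way. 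Combining, $\frac{H_j}{1-b_{j+1}}=\lim_{k\to\infty}S_Q^{-k}R_{j+2,k}$.

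The main obstacle I anticipate is justifying the interchange of the $k\to\infty$ limit with the infinite sum over $l$ (equivalently, with the $n\to\infty$ limit defining $\H_j$ from $P_j^n$): one needs a uniform-in-$k$ tail bound on $\sum_{l\geq L}(1-b_{j+1})\Phi_{j+2,l-1}\,S_Q^{-k}\int y^k Q^l(dy)$. Since $S_Q^{-k}\int y^kQ^l(dy)=m_{l+k}/(m_l S_Q^k)\le 1$ uniformly, this reduces to showing $\sum_{l\geq L}\Phi_{j+2,l-1}\to 0$ as $L\to\infty$, which follows from Corollary \ref{lphi} and the bounds $\gamma_iL_i<1/m_1$, $m_1<R_{i+1}<1$ of (\ref{boundrjlj}) once one checks these force geometric-type decay of $\Phi_{j+2,l}$; establishing that decay rate carefully is the one genuinely non-routine point, and everything else is substitution and the already-proved monotone convergence of $R^n_{j,k}$ and $\gamma_jL_{j,n}$.
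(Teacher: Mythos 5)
Your ``alternative and probably cleaner route'' for (\ref{hjb}) --- tracking the $\delta_{S_Q}$-coefficient through (\ref{hj+1}) and substituting (\ref{hl}) --- and your moment-limit argument for (\ref{hj1-b}) are exactly what the paper does. The obstacle you flag about interchanging $k\to\infty$ with the sum over $l$ is not actually needed: since $Q(\{S_Q\})=0$ forces $Q^l(\{S_Q\})=0$ for every $l$, the measure $\H_j$ lives on $[0,S_Q]$ with sole atom $H_j$ at $S_Q$, so $\lim_{k\to\infty}S_Q^{-k}\int y^k\H_j(dy)=H_j$ follows directly from bounded convergence applied to $(y/S_Q)^k\to\1_{\{y=S_Q\}}$, with no decay estimate on $\Phi_{j+2,l}$ required.
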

\begin{proof}
By (\ref{hj+1}), we obtain
$$H_j=\frac{1-b_{j+1}}{\int y \H_{j+1}(dy)}S_QH_{j+1}.$$
The above display together with (\ref{hl}) lead to (\ref{hjb}). If $Q(S_Q)=0$, then $\lim_{k\to\infty}S_Q^{-k}m_{k+1}=0$. Using this fact and (\ref{pphi}), we obtain 
\begin{align*}
H_j&=\H_j(S_Q)=\lim_{k\to\infty}S_Q^{-k}\int y^k\H_j(dy)&\\
&=\lim_{k\to\infty}\lim_{n\to\infty}S_Q^{-k}\int y^kP_j^n(dy)&\\
&=\lim_{k\to\infty}\lim_{n\to\infty}S_Q^{-k}\left((1-b_{j+1})R_{j+2,k}^n+b_{j+1}m_{k+1} \right)&\\
&=(1-b_{j+1})\lim_{k\to\infty}\lim_{n\to\infty}S_Q^{-k}R_{j+2,k}^n=(1-b_{j+1})\lim_{k\to\infty}S_Q^{-k}R_{j+2,k}.&
\end{align*}
\end{proof}

\begin{proof}[{\bf Proof of Theorem \ref{randet}}]
There are two statements to prove. 

\vspace{3 mm}

\noindent 1. 
By (\ref{rik})
$$R_{1,k}=\frac{m_{k+1}+\gamma_1R_{2,k+1}}{m_1+\gamma_1R_{2}}.$$
By Corollary \ref{uk} in the Appendix, $R_{1,k}$ is strictly increasing in $\gamma_1.$ Then 
$$R_{1,k}>\frac{m_{k+1}}{m_1}$$
implying that 
$$\frac{m_{k+1}}{R_{2,k+1}}<\frac{m_{1}}{R_{2}}.$$
The above inequality entails that for $b_1\in(0,1)$
$$\frac{\partial^2 R_{1,k} }{\partial b_1^2}=\frac{2}{(1+\frac{m_1}{R_2}-b_1)^3}(1+\frac{m_1}{R_2})\frac{R_{2,k+1}}{R_2}(\frac{m_{k+1}}{R_{2,k+1}}-\frac{m_{1}}{R_{2}})<0.$$
So $R_{1,k}$ is strictly concave down in $b_1.$

In the following display, the first equality is due to (\ref{pphi}) and the first inequality is by the above strict concavity. The second equality is due to Lemma \ref{rl} and the second inequality is by Lemma \ref{mono}. The last equality is a consequence of (\ref{pphi}) and Corollary \ref{ukn}.  \begin{align*}
\E\left[\int y^k\I_{S_Q}(dy)\right]&=(1-b)\E[\widetilde R_{1,k}]+bm_k&\\
&<(1-b)\E[\widetilde R_{1,k} |{\beta_1=b}]+bm_k&\\
&=(1-b)\lim_{n\to\infty}\E[\widetilde R^n_{1,k}|{\beta_1=b}]+bm_k&\\
&\leq (1-b)\lim_{n\to\infty}\overline R^n_{1,k}+bm_k&\\
&=\int y^k\K_{Q}(dy).&
\end{align*}

\noindent 2.  By Corollary \ref{h=g}, $I_Q\stackrel{d}{=}\widetilde H_{0}$. Since $I_Q>0$ a.s., by assertion 4) of Corollary 4 in \cite{Y20}, we have $Q(S_Q)=0$. Note that 
$\widetilde H_j/(1-\beta_{j+1})$ involves only $\beta_{j+2},\beta_{j+3},\cdots.$ Then by (\ref{hjb}), 
\begin{align*}\E[I_Q]=\E[\widetilde H_0]&=\E\left[(1-\beta_1)\frac{\widetilde H_0}{1-\beta_1}\right]&\\
&=(1-b)\E\left[\frac{\widetilde H_0}{1-\beta_1}\right]=(1-b)S_Q\E\left[\Gamma_{2}\widetilde L_2\frac{\widetilde H_1}{1-\beta_{2}}\right].&\end{align*}
Moreover for $b_2\in (0,1)$
$$\gamma_2L_2=\frac{1-b_2}{b_2m_1+(1-b_2)R_{3,1}}$$
and by (\ref{boundrjlj})
$$\frac{\partial^2\gamma_2L_2}{\partial b_2^2}=\frac{2m_1(m_1-R_{3,1})}{(b_2m_1+(1-b_2)R_{3,1})^3}<0.$$
So the function $\gamma_{2}L_2\frac{H_1}{1-b_{2}}$ is strictly concave down on $b_2$, as $\frac{H_1}{1-b_{2}}$ does not depend on $b_2$. 
Using (\ref{hj1-b}) and the above strict concavity, together with Lemma \ref{mono}, 
\begin{align*}
\E[\widetilde H_0]&=(1-b)\E\left[\frac{\widetilde H_0}{1-\beta_1}\right]<(1-b)\E\left[\frac{\widetilde H_0}{1-\beta_1} \Big |\beta_2=b\right]&\\
&=(1-b)\lim_{k\to\infty}S_Q^{-k}\E[\widetilde R_{2,k}| {\beta_2=b}]&\\
&= (1-b)\lim_{k\to\infty}\lim_{n\to\infty}S_Q^{-k}\E[\widetilde R^n_{2,k}|{\beta_2=b}]&\\
&\leq (1-b)\lim_{k\to\infty}\lim_{n\to\infty}S_Q^{-k}\E[\widetilde R^n_{2,k}|{\beta_i=b, \forall i\geq 2}]=(1-b)\frac{\overline H_0}{1-b}=\overline H_0.&
\end{align*}

\end{proof}

\subsection{Proof of Theorem \ref{iidofa}}
\begin{proof}[\bf Proof of Theorem \ref{iidofa}]
Note that similarly as in the proof of Lemma \ref{rn}
\begin{align*}
\E\left[\ln \left(|\widetilde W^n|\prod_{j=1}^n\beta_j\right)\right] &=\E\left[\ln\left(\frac{1}{m_1}\prod_{j=1}^{n-1} \beta_j \frac{|\widetilde W^{j,n}|}{| \widetilde W^{j+1,n} |}\right)\right]=\sum_{j=1}^{n-1}\E\left[\ln\frac{\beta_1}{\widetilde L_{1,n-j+1}}\right]-\ln m_1.&\\
\end{align*}
For the second random model, similarly
\begin{align*}
\E\left[\ln \left(|\widehat W^n|\beta^n\right)\right]=\sum_{j=1}^{n-1}\E\left[\ln\frac{\beta}{\widehat L_{1,n-j+1}}\right]-\ln m_1.&
\end{align*}
By Lemma \ref{rik} and (\ref{hl}),  
\begin{equation}\label{betaj}\lim_{n\to\infty}\E\left[\ln \left(|\widetilde W^n|\prod_{j=1}^n\beta_j\right)\right] /n=\E\left[\ln \frac{\beta_1}{ \widetilde L_1}\right]=\E\left[\ln \int y \I_Q(dy)\right],\end{equation}
and 
\begin{equation}\label{betan}\lim_{n\to\infty} \E\left[\ln \left(|\widehat W^n|\beta^n\right)\right]/n=\E\left[\ln \frac{\beta}{ \widehat L}\right]=\E\left[\ln \int y \A_Q(dy)\right].\end{equation}
We compare  next $\E\left[\ln \left(|\widetilde W^n|\prod_{j=1}^n\beta_j\right)\right]$ and $\E\left[\ln \left(|\widehat W^n|\beta^n\right)\right].$ Note that 
$$\ln \left(| W^n|\prod_{j=1}^nb_i\right)=\ln |W^n|+\sum_{j=1}^n\ln b_j.$$
Then second order partial derivative of $\ln\left(| W^n|\prod_{j=1}^nb_i\right)$ with respect to $b_s,b_t$ equals $\frac{\partial^2\ln |W^n|}{\partial b_s\partial b_t}$ which is, by Lemma \ref{aga} in the Appendix, strictly positive for any $1\leq s\neq t\leq n$. Applying Lemma \ref{key}, we obtain 
$$\E\left[\ln \left(|\widetilde W^n|\prod_{j=1}^n\beta_j\right)\right] \leq\E\left[\ln \left(|\widehat W^n|\beta^n\right)\right]. $$
Then by (\ref{betaj}) and (\ref{betan}) we conclude that 
$$\E\left[\ln \int y \I_Q(dy)\right]\leq \E\left[\ln \int y \A_Q(dy)\right].$$
\end{proof}

\subsection{Proof of Theorem \ref{kinofa}}
\begin{proof}[\bf Proof of Theorem \ref{kinofa}]

By Theorem \ref{King},
\begin{equation*}  
K_Q=\left\{  
             \begin{array}{lr}  
            1-\int \frac{bQ(dx)}{1-x/S_Q}, &  \mbox{if $\frac{Q(dx)}{1-x/S_Q}<  b^{-1}$};\\  
             & \\  
             0, &    \mbox{if $\frac{Q(dx)}{1-x/S_Q}\geq  b^{-1}$}.
             \end{array}  
\right.  
\end{equation*}  
So $K_Q$ is a concave up function of $b$, and consequently $\E[A_Q]\geq K_Q.$

To show that there is no one-way inequality between $\E[\int y\A_Q(dy)]$ and $ \int y\K_Q(dy)$, we give a concrete example. Let $Q(dx)=dx$. In this case, $\int \frac{Q(dx)}{1-x/S_Q}=\int \frac{Q(dx)}{1-x}=\infty>  b^{-1}$ for any $b\in (0,1).$ By (\ref{bargammal})
$$\int y\K_Q(dy)=\theta_b$$
which satisfies equation $$\int\frac{b \theta_bdx}{\theta_b-(1-b)x}=1.$$ We show that $\frac{d^2\theta_b}{db^2}$ can be strictly positive and negative for different $b's.$ The above equation can be rewritten as 
$$\int\frac{bdx}{1-tx}=1$$
with $t=\frac{1-b}{\theta_b}\in (0,1)$ strictly decreasing in $b$. Then 
$$b=-\frac{t}{\ln(1-t)},\quad \theta_b=\frac{1}{t}+\frac{1}{\ln(1-t)}.$$
So
$$\frac{d\theta_b}{db}=\frac{d\theta_b/dt}{db/dt}=\frac{-(1-t)\ln^2(1-t)+t^2}{-(1-t)t^2\ln(1-t)-t^3}=\frac{m(t)}{n(t)}$$
with $m(t)$ the numerator and $n(t)$ the denominator. 
Then 
$$\frac{d^2\theta_b}{db^2}=\frac{d(d\theta_b/db)}{dt}/\frac{db}{dt}=\frac{m'(t)n(t)-m(t)n'(t)}{n(t)^2\frac{db}{dt}}$$
where 
\begin{align*}
&m'(t)n(t)-m(t)n'(t)&\\
&=-2t(1-t)^2\ln^3(1-t)+(-4t^2+3t^3)\ln^2(1-t)-t^3(2+t)\ln(1-t)&\\
&=5t^6+O(t^7), \quad t\to 0.&
\end{align*}
As $n(t)^2>0$ and $\frac{db}{dt}<0$ for any $t\in(0,1)$, we have $\frac{d^2\theta_b^2}{db^2}>0$ for $t$ small enough. However $m'(0.5)n(0.5)-m(0.5)n'(0.5)= -4.184810^{-4}<0$, implying $\frac{d^2\theta_b^2}{db^2}<0$ at $t=0.5$. As $t$ is a strictly decreasing function of $b$, we have shown that $\frac{d^2\theta_b^2}{db^2}$ can be strictly positive and negative at different $b$'s. 
\end{proof}

\section{Appendix}
\subsection{Appendix A}
\begin{lemma}\label{simplelemma}
Let $n>1.$ Let $a_0,\cdots, a_n, b_0,\cdots, b_n$ all be strictly positive numbers such that 
$$\frac{a_l}{b_l}<\frac{a_{n}}{b_{n}},\,\,\,a_l<a_{n},\,\,\, b_l<b_{n},\,\,\, \forall \,0\leq l\leq n-1. $$
Let $c_0,\cdots, c_n,  c'_0,\cdots, c'_n$ be nonnegative numbers such that 
$$c_l\geq c'_l, \,\,\, \forall \,0\leq l\leq n-1;\quad \,\,c_n<c'_n;\quad \,\,\sum_{l=1}^nc_l=\sum_{l=1}^nc'_l>0.$$
Then 
\begin{equation}\label{cc'}\frac{\sum_{l=1}^n{c_la_l}}{\sum_{i=1}^n{c_lb_l}}<\frac{\sum_{l=1}^n{c'_la_l}}{\sum_{l=1}^n{c'_lb_l}}.\end{equation}
\end{lemma}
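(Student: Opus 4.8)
The plan is to prove the inequality \eqref{cc'} by reducing it to the single-index perturbation case and then chaining such perturbations together. First I would observe that, because $\sum_l c_l = \sum_l c'_l$ and $c_l \geq c'_l$ for $0\leq l\leq n-1$ while $c_n < c'_n$, the vector $(c'_l)$ is obtained from $(c_l)$ by moving a total mass $\delta := c'_n - c_n > 0$ out of the indices $0,\dots,n-1$ and onto index $n$. It therefore suffices to treat the case where mass $\epsilon > 0$ is moved from a single index $p \in \{0,\dots,n-1\}$ to index $n$, since the general case follows by writing $(c'_l)$ as the endpoint of finitely many (or a continuum of) such elementary moves and using that ``$<$'' is preserved under composition with ``$\leq$''. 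Concretely, set $c''_l = c_l$ for $l \neq p, n$, $c''_p = c_p - \epsilon$, $c''_n = c_n + \epsilon$ with $0 < \epsilon \leq c_p$; I would show $\frac{\sum c_l a_l}{\sum c_l b_l} < \frac{\sum c''_l a_l}{\sum c''_l b_l}$ and then iterate.

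For the elementary move, write $A = \sum_l c_l a_l$, $B = \sum_l c_l b_l$ (both $>0$, since $B>0$ because some $c_l>0$ and all $b_l>0$). After the move the numerator becomes $A + \epsilon(a_n - a_p)$ and the denominator $B + \epsilon(b_n - b_p)$. The desired inequality $\frac{A}{B} < \frac{A + \epsilon(a_n-a_p)}{B + \epsilon(b_n - b_p)}$ is, after cross-multiplying (the denominators are positive because $b_n > b_p$ makes $B + \epsilon(b_n-b_p) > 0$), equivalent to
\[
A\,(b_n - b_p) < B\,(a_n - a_p),
\]
i.e. $\sum_l c_l\big(a_l b_n - a_l b_p - b_l a_n + b_l a_p\big) < 0$, i.e. $\sum_l c_l\big(b_l(a_p - a_n) - a_l(b_p - b_n)\big) < 0$. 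Rearranging, this is $\sum_l c_l\big(a_l b_n - b_l a_n\big) < \sum_l c_l\big(a_l b_p - b_l a_p\big)$, and dividing the $l$-th term by $b_l b_n$ (resp. $b_l b_p$) shows it is implied termwise by $\frac{a_l}{b_l} \leq \frac{a_n}{b_n}$ together with $\frac{a_p}{b_p} \leq \frac{a_n}{b_n}$ — more carefully, I would just check the sign of each summand $c_l\big[(a_l b_n - b_l a_n) - (a_l b_p - b_l a_p)\big] = c_l\big[a_l(b_n - b_p) - b_l(a_n - a_p)\big]$. For $l = n$ this is $c_n[a_n b_n - a_n b_p - b_n a_n + b_n a_p] = c_n[a_n(b_n-b_p) - b_n(a_n - a_p)]$; using $\frac{a_p}{b_p} < \frac{a_n}{b_n}$ one gets $a_p b_n < a_n b_p$, hence this term is $c_n[a_n b_n - a_n b_p - a_n b_n + a_n b_p]\cdot(\text{after substitution})$ — here I need the hypotheses $a_p < a_n$, $b_p < b_n$, and $\frac{a_p}{b_p}<\frac{a_n}{b_n}$ to pin down the sign of every $c_l$-summand. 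With $c_n > 0$ at least one strict inequality survives, giving the strict final inequality.

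I expect the main obstacle to be the bookkeeping in the termwise sign analysis: one must verify that for every $l$, the quantity $a_l(b_n - b_p) - b_l(a_n - a_p)$ has the right sign (so that, weighted by $c_l \geq 0$ and summed, the total is negative), and that strictness is not lost — this is where the three separate hypotheses $a_l < a_n$, $b_l < b_n$, $\frac{a_l}{b_l} < \frac{a_n}{b_n}$ each play a role, and the index $l = p$ versus $l \neq p$ versus $l = n$ cases behave slightly differently. Once the elementary-move inequality is established, the passage to the general case is routine: decompose the redistribution of mass $\delta$ from $\{0,\dots,n-1\}$ to $n$ into at most $n$ successive elementary moves (emptying one index at a time), apply the elementary inequality at the first move to get strict ``$<$'', and apply its non-strict version (or note monotonicity) at the remaining moves, concluding $\frac{\sum c_l a_l}{\sum c_l b_l} < \frac{\sum c'_l a_l}{\sum c'_l b_l}$. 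I would present the elementary move as the crux and relegate the chaining to a one-line remark.
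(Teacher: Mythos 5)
Your proof is correct and takes essentially the same approach as the paper: both hinge on the inequality $(b_n-b_p)A < (a_n-a_p)B$, which you obtain via a discrete mass move from index $p$ to $n$ while the paper obtains it as the sign of $\partial(A/B)/\partial c_p$. The termwise sign check you sketch (but leave slightly garbled for the $l=n$ case) is most cleanly closed by observing that the three hypotheses give $\frac{a_n-a_p}{b_n-b_p} > \frac{a_n}{b_n} \geq \frac{a_l}{b_l}$ for every $l$, which is exactly the chain the paper uses.
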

\begin{proof}
Without loss of generality, assume $\sum_{l=1}^nc_l=1.$ Define 
$$A=\sum_{l=1}^n{c_la_l}=\sum_{l=1}^{n-1}{c_la_l}+\left(1-\sum_{l=1}^{n-1}c_l\right)a_n,  \quad B=\sum_{l=1}^{n-1}{c_lb_l}+\left(1-\sum_{l=1}^{n-1}c_l\right)b_n.$$
and 
$$f(c_0,\cdots,c_{n-1})=\frac{A}{B},\,\,\,\text{ with } c_l\geq 0,\,\,\, \sum_{l=0}^{n-1}c_l\in[0,1]. $$
To prove (\ref{cc'}), it suffices to show that for any $0\leq l\leq n-1$
$$\frac{\partial f}{\partial c_l}<0,\quad \forall\, c_l\in(0,1).$$
Without loss of generality, we consider only $l=0.$ We have
$$\frac{\partial f}{\partial c_0}=\frac{(b_n-b_0)A-(a_n-a_0)B}{B^2}.$$
Note that by the assumptions on $a_l$'s and $b_l$'s, 
$$\frac{a_n-a_0}{b_n-b_0}>\frac{a_n}{b_n}>\frac{a_l}{b_l}, \quad \forall\, 0\leq l\leq n-1.$$
That implies $$(b_n-b_0)A<(a_n-a_0)B$$
which entails $\frac{\partial f}{\partial c_0}<0.$
\end{proof}
%By Theorem \ref{main}, $L_1$ is of particular importance. In matrix representation, we have:
%\begin{equation}\label{rr}
%L_1=\lim_{n\to\infty}\frac{\left |\begin{array}{ccccc}m_{1}&m_{2}&m_{3}&\cdots&m_n\\
%-\zeta'&m_1&m_2&\cdots&m_{n-1}\\
%0&-\gamma_{3}&m_1&\cdots&m_{n-2}\\
%0&0&\ddots&\ddots&\vdots\\
%0&0&\ddots&-\gamma_{n}&m_1\\
%\end{array}\right |}{\left |\begin{array}{ccccc}m_{1}&m_{2}&m_{3}&\cdots&m_{n+1}\\
%-\gamma_1&m_1&m_2&\cdots&m_{n}\\
%0&-\gamma_{2}&m_1&\cdots&m_{n-1}\\
%0&0&\ddots&\ddots&\vdots\\
%0&0&\ddots&-\gamma_{n}&m_1\\
%\end{array}\right |}=\frac{1}{m_1+\lim_{n\to\infty}\gamma_1\frac{|U^rW^{2,n}|}{|W^{2,n}|}}=\frac{1}{m_1+\gamma_1\frac{m_2+\zeta'\frac{m_3\cdots}{m_1\cdots}}{m_1+\zeta'\frac{m_2\cdots}{m_1\cdots}}}.\end{equation}

%\begin{lemma}
%$\gamma_1L_1$ is decreasing strictly in $\beta_1$ and increasing strictly in every $\beta_i, i\geq 2.$
%\end{lemma}
%\begin{proof}
%Strict monotonicity on $\gamma_1$ can be proved using the last term of (\ref{rr}).
%Regarding (\ref{recurnu}), smaller $\beta_i$ for $i\geq 2$ implies stronger selection and bigger $\int y\nu(dy)_{0}$. % Strictly monotonicity on other $\gamma_i$ follows from (\ref{recurnu}) and 
%Since $\frac{1-\beta_{1}}{\int y\nu(dy)_{0}}=\gamma_{1}L_{1}$, $\gamma_{1}L_{1}$ is strictly increasing on each $\beta_i,i\geq 2.$  
%\end{proof}

\subsection{Appendix B}

\begin{lemma}\label{xy}
Let $X^n=(x_0^n,\cdots, x_{n}^n)$ be the unique solution of the equation
\begin{equation}\label{X}X^nW^n=r_1U^rW^n=(m_2,m_3,\cdots,m_{n+1}, m_{n+2}).\end{equation}
%Let $Y^n=(y_0^n,\cdots, y_{n}^n)$ be the unique solution of the equation
%\begin{equation}\label{Y}Y^nU^cW^n=R_1U^rU^cW^n=(m_2,m_3,\cdots, m_{n+1}, m_{n+3}).\end{equation}
Then $m_1<x_0^n<x_0^{n+1}<1$ for any $n\geq 1$. %;$\quad for any $1\leq i\leq n$, if $\gamma_i=\infty,$ then $x_i^n=y_i^n=0,$ otherwise, $y_i^n>x_i^n>0.$ 
%If $X, Y$ are vectors as in (\ref{X}) and (\ref{Y}), then both $X,Y$ are strictly positive. 
\end{lemma}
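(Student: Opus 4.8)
The plan is to solve the linear system \eqref{X} explicitly by Cramer's rule and then interpret the resulting ratios in terms of the quantities already introduced. Since $X^n$ satisfies $X^nW^n = r_1(U^rW^n)$, the first coordinate is
\[
x_0^n = \frac{|W^n \text{ with } r_1 \text{ replaced by } (m_2,\dots,m_{n+2})|}{|W^n|} = \frac{|U^rW^n|}{|W^n|} = R^n_1 = R^n_{1,1},
\]
because replacing the first row of $W^n$ by $(m_2,m_3,\dots,m_{n+2})$ is exactly the operation $U^r$. Thus the claim $m_1 < x_0^n < x_0^{n+1} < 1$ is literally the assertion of Lemma \ref{gk1} applied with $j=1$ (equivalently the $k=1$ case of Corollary \ref{ukn}), whose proof in turn relies on the present lemma; so the real content is the chain of inequalities for the solution of \eqref{X}, which must be established by a self-contained argument rather than by appeal to Lemma \ref{gk1}.

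First I would record that $W^n$ is invertible: expanding along the first column shows $|W^n| = m_1|W^{2,n}| + \gamma_1|U^rW^{2,n}|$, and an induction using that each $W^{j,n}$ has a strictly positive determinant (as noted after \eqref{lr}) gives $|W^n|>0$, so $X^n$ is well defined and unique. Next, to get the bounds $m_1 < x_0^n < 1$, I would use the probabilistic interpretation: by Lemma \ref{matrix}, $|W^{j+1,n}|/|W^{j,n}| = L_{j,n}$ and more relevantly $x_0^n = R^n_1 = \int y^2 P_0^n(dy) / \int y P_0^n(dy)$ when $P^n_n = Q$, where $P^n_0$ is a probability measure on $[0,1]$ whose support meets $(m_1^{1/1},1]$ in a nondegenerate way; Hölder/Jensen then gives $\int y^2 P^n_0 / \int y P^n_0 > \int y P^n_0 \ge \cdots$, and more directly $\int y^2 P_0^n / \int y P_0^n > m_1$ follows from the fact that $P_0^n$ is a nontrivial mixture of the $Q^l$ with $Q^l \ge Q^0 = Q$ in the stochastic order, together with $\int y^2 Q / \int y Q > m_1$ by Hölder's inequality \eqref{holder}; and $< 1$ is immediate since all mass sits in $[0,1]$.

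The substantive step is the strict monotonicity $x_0^n < x_0^{n+1}$. Here I would invoke the auxiliary comparison lemma (the one the excerpt cites as "Lemma \ref{xy} in the Appendix" when proving Lemma \ref{gk1}) — but since that \emph{is} this lemma, I must instead argue directly. The approach: write both $x_0^n$ and $x_0^{n+1}$ as ratios $\sum_l c_l a_l / \sum_l c_l b_l$ with $a_l = m_{l+2}/m_l$, $b_l = m_{l+1}/m_l$ the (second, first) moments of $Q^l$, and $c_l, c'_l$ the mixture weights of $P_0^n$ versus $P_0^{n+1}$ coming from Corollary \ref{pjn}; by Corollary \ref{lphi} the weights shift mass from lower-index $Q^l$ (for the length-$n$ chain) toward the new highest-index term $Q^{n+1}$ (for the length-$(n{+}1)$ chain), and Hölder's inequality \eqref{holder} gives $a_l/b_l$ and $a_l, b_l$ all strictly increasing in $l$. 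This is precisely the hypothesis pattern of Lemma \ref{simplelemma}, which then yields $x_0^n < x_0^{n+1}$. The main obstacle is therefore bookkeeping: verifying that the weight vectors $(c_l)$ and $(c'_l)$ genuinely satisfy $c_l \ge c'_l$ for $l \le n$, $c_n = 0 < c'_{n+1}$ (after padding), and equal total mass — i.e.\ that the decrease of each $\gamma_{i+j}L_{i+j,n}$ in $n$ propagates correctly through the products $\Phi_{j,l,n}$ — which is exactly what Corollary \ref{lphi} supplies, so the chain closes without circularity once one is careful to prove Lemma \ref{xy} \emph{before} Lemma \ref{gk1} (as the excerpt's ordering indicates) using only Corollary \ref{lphi}, Corollary \ref{pjn}, Hölder \eqref{holder}, and Lemma \ref{simplelemma}.
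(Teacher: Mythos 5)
Your identification $x_0^n=|U^rW^n|/|W^n|=R_1^n$ and the bounds $m_1<x_0^n<1$ are fine (these are the parts of Lemma~\ref{gk1} established before it ever calls on this lemma). The problem is your route to the monotonicity $x_0^n<x_0^{n+1}$. You propose to write $x_0^n,x_0^{n+1}$ as ratios $\sum c_l a_l/\sum c_l b_l$ via Corollary~\ref{pjn} and then feed Corollary~\ref{lphi} into Lemma~\ref{simplelemma}. But Corollary~\ref{lphi} (the strict decrease of $\gamma_jL_{j,n}$ and of $\Phi_{j,l,n}$ in $n$) is obtained from Lemma~\ref{gk1} together with \eqref{laa}: in fact $L_{j,n}=1/(m_1+\gamma_jR^n_{j+1})$, so the monotone decrease of $L_{j,n}$ is \emph{equivalent} to the monotone increase of $R^n_{j+1}$ — exactly the content of Lemma~\ref{gk1}, whose monotonicity step is exactly what Lemma~\ref{xy} is being used to prove. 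So the dependency loop you were worried about is not closed by ``using only Corollary~\ref{lphi}, Corollary~\ref{pjn}, Hölder, and Lemma~\ref{simplelemma}'': Corollary~\ref{lphi} sits downstream of Lemma~\ref{xy}, and your argument is circular. (This is also why the paper's Corollary~\ref{ukn} deliberately restricts the $\Phi$-weight comparison to $k\ge 2$ and defers the $k=1$ case to Lemma~\ref{gk1}/Lemma~\ref{xy}.)

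The paper's proof breaks the circle with a direct constructive argument on the linear system rather than a weight comparison. One perturbs the first coordinate: set $x_0^{n,\varepsilon}=x_0^n+\varepsilon$ and solve the first $n+1$ equations (i.e.\ $X^{n,\varepsilon}C^n=(m_2,\dots,m_{n+1})$ with $C^n$ the matrix $W^n$ minus its last column), so each $x_i^{n,\varepsilon}$ is continuous and strictly increasing in $\varepsilon$. Then one appends a coordinate $A_\varepsilon:=\gamma_{n+1}^{-1}\bigl(x_0^{n,\varepsilon}m_{n+1}+\cdots+x_n^{n,\varepsilon}m_1-m_{n+2}\bigr)$ chosen so that $Y=(x_0^{n,\varepsilon},\dots,x_n^{n,\varepsilon},A_\varepsilon)$ satisfies the second-to-last equation of the $(n+1)$-system automatically; Hölder's inequality \eqref{holder} gives $x_0^nm_{n+2}+\cdots+x_n^nm_2<m_{n+3}$, which shows the residual in the last equation is strictly negative at $\varepsilon=0$ and strictly increasing in $\varepsilon$, hence there is a unique $\varepsilon>0$ making $Y$ solve $YW^{n+1}=r_1U^rW^{n+1}$. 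Uniqueness forces $X^{n+1}=Y$, so $x_0^{n+1}=x_0^n+\varepsilon>x_0^n$. This uses only \eqref{holder} and elementary linear algebra, not Corollary~\ref{lphi} or Lemma~\ref{simplelemma}, which is what makes it a valid base case for the induction ladder $\text{Lemma~\ref{xy}}\Rightarrow\text{Lemma~\ref{gk1}}\Rightarrow\text{Corollary~\ref{lphi}}\Rightarrow\text{Corollary~\ref{ukn}}$. To repair your proposal you would need to replace the appeal to Corollary~\ref{lphi} by an argument of this independent, elementary kind.
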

\begin{proof}
By Cramer's rule and Lemma \ref{gk1}
$$x_0^n=\frac{|U^rW^n|}{|W^n|}=R^n_{1}\in(m_1,1),\quad x_0^{n+1}=R^{n+1}_{1}\in(m_1,1).$$
%If $k=1$, then $X^1=(\frac{m_1m_2+\gamma_1m_3}{m_1^2+\gamma_1m_2},\frac{m_1m_3-m_2^2}{m_1^2+\gamma_1m_2})$. So if $\gamma_1\neq \infty,$ $x_0^1>0, x_1^1>0$, otherwise $x_0^1>0,x_1^1=0.$
%For $k=n$, assume that $x_0^n>0;$ for any $1\leq i\leq n$, $x_i^n>0$ if $\gamma_i< \infty$ and $ x_i^n=0$ otherwise. 
%Now let $k=n+1.$ 
For any $n\geq 1$, we are going to construct $X^{n+1}$ from $X^n$ and compare $x_0^n, x_0^{n+1}.$  The main argument is H\"older's inequality (\ref{holder}).

%We next prove that 
%for any $1\leq i\leq n$, if $\gamma_i=\infty,$ then $x_i^n=y_i^n=0,$ otherwise, $y_i^n>0, x_i^n>0.$  It suffices to study $x_i^n$, as the same approach works for $y_i^n.$

%The same strategy applies to $X$ and $Y$ and we could just focus on $X$. 
%To emphasis the dependence on $n$, we shall write 
%$$X=X^n=(x^n_1, \cdots, x^n_{n+1}).$$
%We use the induction. 
%For $n=1$, $X^1=(\frac{m_1m_2+\gamma_1m_3}{m_1^2+\gamma_1m_2},\frac{m_1m_3-m_2^2}{m_1^2+\gamma_1m_2})$. So if $\gamma_1\neq \infty,$ $x_0^1>0, x_1^1>0$, otherwise $x_0^1>0,x_1^1=0.$
%Assume that $X^n$ is exactly the same as described in the lemma. 
Note that 
$$x_0^nm_{n+1}+\cdots+x_{n}^nm_1=m_{n+2}.$$
Using (\ref{holder}), we get
\begin{equation}\label{<3}x_0^nm_{n+2}+\cdots+x_{n}^nm_2<m_{n+3}.\end{equation}
For $\varepsilon\geq 0$, let $x_0^{n,\varepsilon}=x_0^n+\varepsilon.$ Let $C^n$ be the matrix of $W^{n}$ with the last 
column removed. Then there exists a unique vector $X^{n,\varepsilon}=(x_0^{n,\varepsilon},\cdots, x_{n}^{n,\varepsilon})$ for a given $\epsilon$ such that 
\begin{equation}\label{xc}X^{n,\varepsilon}C^n=(m_2,m_3,\cdots,m_{n+1}).\end{equation}
It is clear that if $\gamma_i=\infty$, then $x_i^{n,\varepsilon}=0$; otherwise $x_i^{n,\varepsilon}$ is continuous and strictly increasing on $\varepsilon$.

To construct $X^{n+1}$ from $X^n$, the idea is to find a number $A_\epsilon\geq 0$ such that 
$$Y=(x_0^{n,\varepsilon},\cdots, x_{n}^{n,\varepsilon}, A_\epsilon)$$
satisfies $$YW^{n+1}=r_1U^rW^{n+1}=(m_2,m_3,\cdots,m_{n+1}, m_{n+2},m_{n+3}).$$
Then $X^{n+1}=Y.$

To achieve  this, let 
$$A_{\varepsilon}=\gamma_{n+1}^{-1}(x_0^{n,\varepsilon}m_{n+1}+\cdots+x_{n}^{n,\varepsilon}m_1-m_{n+2})(\equiv 0, \text{ if }\gamma_{n+1}=\infty).$$
Then the dot product of $Y$
and the second last column of $W^{n+1}$ gives $m_{n+2}$: 
$$x_0^{n,\varepsilon}m_{n+1}+\cdots+x_n^{n,\varepsilon}m_1-\gamma_{n+1}A_{\varepsilon}=m_{n+2}.$$
If $A_{\varepsilon}\not\equiv 0,$ then $A_{\varepsilon}$ is continuous and strictly increasing on $\varepsilon$ with $A_0=0$. Therefore, in view of (\ref{<3}), there
exists a unique $\varepsilon>0$ such that the dot product of $Y$
and the last column of $W^{n+1}$ gives $m_{n+3}$:
$$x_0^{n,\varepsilon}m_{n+2}+\cdots+x_n^{n,\varepsilon}m_2+A_{\varepsilon}m_1=m_{n+3}.$$
Then together with (\ref{xc}), %strictly positive satisfies 
$$YW^{n+1}=(m_2,m_3,\cdots,m_{n+3}).$$
So $X^{n+1}=Y.$ As $x_0^{n,\epsilon}$ is strictly increasing in $\epsilon$ and the $\epsilon$ in the above equality is strictly positive, we obtain that $0<x_0^n<x_0^{n,\epsilon}=x_0^{n+1}<1$.%;  for  $1\leq i\leq n$, $x_i^n<x_i^{n+1}$ if $\gamma_i< \infty;$  $x_i^n=x_i^{n+1}=0$ if $\gamma_i= \infty;$ for $i=n+1,$ $x_{n+1}^{n+1}>0$ if $\gamma_i< \infty$, 
% $x_{n+1}^{n+1}=0$ if $\gamma_i=\infty$. By induction, $0<x_0^n<x_0^{n+1}$ for any $n\geq 1.$ 

%Notice that 

%$$x_0^{n+1}= \frac{|U^rA^{n+1}|}{|A^{n+1}|}=
%\frac{|U^cA^{n}|}{|U^rU^cA^{n}|}=\frac{m_1 |U^rA^{n} |+\gamma_{n+1} |U^rU^cA^{n}|}{m_1\Big |A^{n} |+\gamma_{n+1} |U^cA^{n}|}>x_0^n=\frac{|U^rA^{n}|}{|A^{n}|}.$$

%So $$\frac{|U^rU^cA^{n}|}{|U^cA^{n}|}>x_0^n.$$

%But the left-hand term of the above display equals exactly $y_0^n.$ Then the lemma is proved. 
\end{proof}

\subsection{Appendix C}
\begin{proposition}\label{wij}For any $1\leq j<i\leq n$ and $b_i,b_j\in (0,1)$, $$\frac{\partial^2\ln |W^n|}{\partial b_i\partial b_j}>0, \,\,\forall n\geq i;\quad  \lim_{n\to\infty}\frac{\partial^2\ln |W^n|}{\partial b_i\partial b_j}>0.$$
\end{proposition}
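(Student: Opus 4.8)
The plan is to compute the mixed partial derivative by exploiting the determinant structure of $W^n$ and the fact that $b_i$ enters $W^n$ only through $\gamma_i$, which in turn appears only in a single row (the $(i+1)$-st row, as the subdiagonal entry $-\gamma_i$). First I would record the chain-rule conversion: since $\gamma_i = (1-b_i)/b_i$ is a strictly decreasing function of $b_i$ with $d\gamma_i/db_i = -1/b_i^2 < 0$ and $d^2\gamma_i/db_i^2 = 2/b_i^3 > 0$, and since for $j\neq i$ the variables $\gamma_i,\gamma_j$ are algebraically independent entries of $W^n$, the cross term $\partial^2\ln|W^n|/\partial b_i\partial b_j$ equals $\big(\partial^2\ln|W^n|/\partial\gamma_i\partial\gamma_j\big)\cdot(1/b_i^2 b_j^2)$ (no first-derivative correction survives because differentiating in $b_j$ kills any pure function of $\gamma_i$). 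So it suffices to prove $\partial^2\ln|W^n|/\partial\gamma_i\partial\gamma_j > 0$ for $1\le j<i\le n$, and to check the same sign survives the limit $n\to\infty$.

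Next I would use the elementary fact that $|W^n|$ is affine (degree one) in each $\gamma_k$ separately — each $\gamma_k$ appears in exactly one entry, so by multilinearity of the determinant in rows, $|W^n| = \alpha + \beta\gamma_i$ with $\alpha,\beta$ independent of $\gamma_i$ (and similarly for $\gamma_j$). Writing $g = |W^n|$, $g' = \partial g/\partial\gamma_i$, $g'' = \partial g/\partial\gamma_j$, we get $\partial^2\ln g/\partial\gamma_i\partial\gamma_j = (g\, g_{ij} - g' g'')/g^2$ where $g_{ij} = \partial^2 g/\partial\gamma_i\partial\gamma_j$. Here I would identify $g'$, $g''$, $g_{ij}$ as determinants (or products of determinants) of the blocks of $W^n$ obtained by deleting rows/columns around positions $i,j$: expanding along the relevant subdiagonal entries, $g' = \gamma_j$-dependent quantity that factors through smaller $W^{*,*}$ blocks, and $g_{ij}$ is a product of three such blocks. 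The target inequality $g\,g_{ij} > g'g''$ should then reduce, after cancelling common block-determinant factors (all strictly positive by the remark preceding Lemma \ref{rl}), to a statement of the form $|W^{a,b}|\cdot|W^{c,d}| > |W^{a,d}|\cdot|W^{c,b}|$ for suitably nested index ranges — i.e. a log-supermodularity / total-positivity property of the family $\{|W^{j,n}|\}$, which in turn follows from Hölder's inequality (\ref{holder}) on the moments $m_k$ in the same style as Lemma \ref{gk1} and Lemma \ref{xy}. For the $n\to\infty$ limit, I would invoke that $R^n_{j,k}\uparrow R_{j,k}$ and $\gamma_j L_{j,n}\downarrow\gamma_j L_j$ (Lemma \ref{rl}) so that each block ratio converges, and since the finite-$n$ inequality is strict and the limiting ratios satisfy the same Hölder bounds strictly (by (\ref{boundrjlj})), the strict inequality persists in the limit.

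The main obstacle I anticipate is the bookkeeping in the second step: writing $g$, $g'$, $g''$, $g_{ij}$ cleanly as products of $W^{\cdot,\cdot}$-block determinants when $j < i$ are interior indices requires a careful Laplace/cofactor expansion simultaneously along two different subdiagonal entries, and one must verify that the "cross" second derivative $g_{ij}$ genuinely factors as a product of three blocks (the block above row $j+1$, the block between rows $j+1$ and $i+1$, and the block below row $i+1$) with no leftover terms. Once that factorization is in hand, reducing $g g_{ij} - g' g'' > 0$ to a single Hölder-type moment inequality is mechanical, and it is essentially the same computation already used for $\partial^2\ln\Psi_n/\partial b_i\partial b_j$ in the proof of Theorem \ref{gammal3} (which cites this Proposition), so consistency there is a useful sanity check. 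I would present the block identities as a short auxiliary lemma (analogous to Lemma \ref{aga}, Lemma \ref{type*}, Corollary \ref{uk} in the Appendix) and then close with the two-line Hölder argument and the limiting remark.
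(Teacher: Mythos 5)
Your overall strategy --- exploit the affine dependence of $|W^n|$ on each $\gamma_k$, reduce the mixed logarithmic derivative to a two--by--two ``minor'' inequality among block determinants of the form $|W^{a,b}||W^{c,d}|>|W^{a,d}||W^{c,b}|$, and trace that back to H\"older --- is essentially the paper's strategy. The paper organizes it slightly differently: it differentiates the normalized identity $1=\gamma_i\frac{\partial|W^n|}{\partial\gamma_i}/|W^n|+|W^{i-1}|\,|W^{i+1,n}|/|W^n|$ with respect to $b_j$ and lands on a manifestly positive prefactor times $|W^{n}||W^{j+1,i-1}|-|W^{i-1}||W^{j+1,n}|$, which is exactly your target inequality with $(a,b,c,d)=(1,n,j+1,i-1)$. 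Positivity then costs nothing new: this difference equals $|W^n||W^{i-1}|\bigl(\prod_{l=1}^{j}L_{l,i-1}-\prod_{l=1}^{j}L_{l,n}\bigr)$, and $\gamma_lL_{l,n}$ is \emph{strictly decreasing} in $n$ by Lemma \ref{rl}, so for $n>i-1$ the bracket is strictly positive; the same strict monotonicity (down to the limit $\gamma_lL_l$) gives the $n\to\infty$ statement. You do not need to rerun a H\"older argument in the style of Lemmas \ref{gk1} and \ref{xy} --- Lemma \ref{rl} already packages it.

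One intermediate claim in your plan is concretely false and you should not try to verify it: $g_{ij}=\partial^2|W^n|/\partial\gamma_i\partial\gamma_j$ does \emph{not} factor as the product of the three blocks above row $j+1$, between rows $j+1$ and $i+1$, and below row $i+1$. For $n=2$, $j=1$, $i=2$ one has $|W^2|=m_1^3+m_1m_2\gamma_1+m_1m_2\gamma_2+m_3\gamma_1\gamma_2$, so $g_{ij}=m_3$, whereas the three blocks are each the $1\times1$ matrix $(m_1)$ and their product is $m_1^3$. Only the constant term $A=|W^{j-1}||W^{j+1,i-1}||W^{i+1,n}|$ factors in that way; the coefficients of $\gamma_j$, $\gamma_i$ and $\gamma_i\gamma_j$ are single ``staircase'' minors. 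Your reduction survives nonetheless, because writing $|W^n|=A+B\gamma_j+C\gamma_i+D\gamma_i\gamma_j$ gives $g\,g_{ij}-g'g''=AD-BC$ identically, and one can check that $AD-BC=\gamma_i^{-1}\gamma_j^{-1}|W^{j-1}||W^{i+1,n}|\bigl(|W^n||W^{j+1,i-1}|-|W^{i-1}||W^{j+1,n}|\bigr)$ (in the toy example both sides equal $m_1^2(m_1m_3-m_2^2)>0$ by (\ref{holder})). So the fix is to prove that last identity --- or, more economically, to bypass the four-coefficient bookkeeping entirely by differentiating the normalized cofactor identity as the paper does --- and then quote the strict monotonicity of $\gamma_lL_{l,n}$ from Lemma \ref{rl} for both the finite-$n$ inequality and its limit.
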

\begin{proof}

Notice that 
$$|W^n|=\gamma_i\frac{d|W^n|}{d\gamma_i}+\left |\begin{array}{cc}
W^{1,i-1}&0\\
0&W^{i+1, n}\\
\end{array}\right |=\gamma_i\frac{d|W^n|}{d\gamma_i}+|W^{i-1}||W^{i+1,n} |.$$
Dividing both sides by $|W^n|$ yields
\begin{equation}\label{wgamma}1=\gamma_i\frac{d|W^n|}{d\gamma_i}/|W^n|+|W^{i-1} |\frac{|W^{i+1,n}|}{|W^n|}\end{equation}
Using the above display
\begin{align*}
\frac{\partial^2\ln |W^n|}{\partial b_i\partial b_j}&=-\frac{1}{b_i^2}\frac{\partial}{\partial b_j}(\frac{\partial |W^n|}{\partial \gamma_i}/|W^n|)&\\
&=-\gamma_i^{-1}\frac{1}{b_i^2}\frac{\partial}{\partial b_j}(1-|W^{i-1}||W^{i+1,n} |/|W^n|)&\\\
&=\gamma_i^{-1}\frac{1}{b_i^2}|W^{i+1,n} |\frac{\partial}{\partial b_j}(|W^{i-1} |/|W^n|)&\\
&=\gamma_i^{-1}\gamma_j^{-1}\frac{1}{b_i^2b_j^2}|W^{i+1,n}||W^{j-1} |/|W^n|^2\Big(|W^{n}||W^{j+1,i-1} |-|W^{i-1}||W^{j+1,n} |\Big)&\\
&=\gamma_i^{-1}\gamma_j^{-1}\frac{1}{b_i^2b_j^2}\frac{|W^{i+1,n}||W^{j-1}||W^{i-1}|}{|W^n|}\Big(\frac{|W^{j+1,i-1}|}{|W^{i-1}|}-\frac{|W^{j+1,n}|}{|W^n|}\Big)&\\
&=\frac{1}{(1-b_j)^2(1-b_i)^2}\gamma_1L_{1,n}\cdots\gamma_iL_{i,n}\frac{|W^{j-1}|}{\gamma_1\cdots\gamma_{j-1}}\frac{|W^{i-1}|}{\gamma_1\cdots\gamma_{i-1}}&\\
&\quad \times (\gamma_1L_{1,i-1}\cdots\gamma_jL_{j,i-1}-\gamma_1L_{1,n}\cdots\gamma_jL_{j,n}).&
\end{align*}
By Lemma \ref{rl}, we can conclude $\frac{\partial^2\ln |W^n|}{\partial b_i\partial b_j}>0$. Letting $n\to\infty$ we get the following% using the same arguments 
\begin{align*}
\lim_{n\to\infty}\frac{\partial^2\ln |W^n|}{\partial b_i\partial b_j}&=\frac{1}{(1-b_j)^2(1-b_i)^2}\gamma_1L_1\cdots\gamma_iL_i\frac{|W^{j-1}|}{\gamma_1\cdots\gamma_{j-1}}\frac{|W^{i-1}|}{\gamma_1\cdots\gamma_{i-1}}&\\
&\quad \times (\gamma_1L_{1,i-1}\cdots\gamma_jL_{j,i-1}-\gamma_1L_1\cdots\gamma_jL_j)>0.&
\end{align*}
%where the inequality is due to Remark \ref{llimit} and Remark \ref{gammal}. 
\end{proof}

\begin{corollary}\label{pushc}
For any $i\geq 1$, $\gamma_iL_i$ is strictly decreasing in $b_i$ and strictly increasing in $b_{j}, \,\, \forall j>i.$ The same result holds for $\gamma_iL_{i,n}$.
\end{corollary}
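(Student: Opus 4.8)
The plan is to read off both monotonicity statements directly from the formula for $\frac{\partial^2 \ln|W^n|}{\partial b_i \partial b_j}$ obtained inside the proof of Proposition \ref{wij}, combined with the sign of the first derivative $\frac{\partial \ln|W^n|}{\partial b_i}$, which is controlled by Lemma \ref{rconv} and Lemma \ref{aga}. Recall from \eqref{laa} that $\gamma_iL_{i,n} = \gamma_i/(m_1+\gamma_iR^n_{i+1})$, and from Lemma \ref{rl} that this decreases strictly in $n$ to $\gamma_iL_i$. So it suffices to establish the two monotonicity claims for the finite-$n$ quantity $\gamma_iL_{i,n}$ and then pass to the limit; since a strictly decreasing limit of functions that are (strictly) monotone in a parameter is still monotone, the passage to the limit is harmless for the non-strict direction, and the strictness in the limit will come from the strict sign of the limiting second derivative already recorded at the end of Proposition \ref{wij}.

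First I would relate $\gamma_iL_{i,n}$ to logarithmic derivatives of $|W^n|$. Expanding $|W^n|$ along the appropriate column as in \eqref{wgamma} gives $\gamma_i \frac{d|W^n|}{d\gamma_i}/|W^n| = 1 - |W^{i-1}|\,|W^{i+1,n}|/|W^n|$, and iterating the factorisation $|W^{k}| = m_1|W^{k+1,\cdot}| + \gamma_k|U^rW^{k+1,\cdot}|$ expresses $\gamma_iL_{i,n}$ as a ratio of the form $\prod_{l\le i}\gamma_lL_{l,n}$ divided by $\prod_{l<i}\gamma_lL_{l,n}$; more directly, the cleanest route is: $\ln(\gamma_iL_{i,n})$ is, up to terms not depending on $b_i$, a piece of $\ln|W^n| + \sum_j \ln b_j$, and more precisely one checks that $\frac{\partial}{\partial b_i}\ln(\gamma_iL_{i,n})$ has the sign one wants. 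Concretely, for the dependence on $b_i$ itself: from $\gamma_iL_{i,n} = \gamma_i/(m_1+\gamma_iR^n_{i+1})$ and the fact that $R^n_{i+1}$ does not involve $\gamma_i$, we get $\frac{d(\gamma_iL_{i,n})}{d\gamma_i} = m_1/(m_1+\gamma_iR^n_{i+1})^2 > 0$, and since $\frac{d\gamma_i}{db_i} = -1/b_i^2 < 0$, $\gamma_iL_{i,n}$ is strictly decreasing in $b_i$; letting $n\to\infty$ preserves this, with strictness because $\frac{d(\gamma_iL_i)}{d\gamma_i} = m_1/(m_1+\gamma_iR_{i+1})^2>0$ by Lemma \ref{rl}.

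For the dependence on $b_j$ with $j>i$: here the key identity is that $\gamma_iL_{i,n}$ depends on $b_j$ only through $R^n_{i+1} = R^n_{i+1,1}$, which by Corollary \ref{ukn}/Lemma \ref{rl} involves $\gamma_{i+1},\dots,\gamma_n$. Since $\gamma_iL_{i,n}$ is strictly decreasing in $R^n_{i+1}$, it is enough to show $R^n_{i+1}$ is strictly decreasing in $b_j$, equivalently strictly increasing in $\gamma_j$; but $R^n_{i+1} = R^n_{i+1,1}$ and the strict monotonicity of $R^n_{\cdot,k}$ in each $\gamma_j$ (with $j$ in the relevant range) is exactly Corollary \ref{uk} in the Appendix, invoked already in the proof of Lemma \ref{mono}. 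Alternatively, and more in the spirit of the surrounding text, one simply cites the displayed formula for $\frac{\partial^2\ln|W^n|}{\partial b_i\partial b_j}>0$ from Proposition \ref{wij}: since $\ln(\gamma_iL_{i,n})$ differs from a sum of $b_i$-and-$b_j$-separable terms and blocks of $\ln|W^n|$, and $\frac{\partial}{\partial b_i}\ln(\gamma_iL_{i,n})<0$, the positivity of the mixed second derivative forces $\frac{\partial}{\partial b_i}\ln(\gamma_iL_{i,n})$ to increase in $b_j$, i.e. $\gamma_iL_{i,n}$ increases in $b_j$; the strict limiting inequality in Proposition \ref{wij} then gives strictness for $\gamma_iL_i$. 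I expect the main obstacle to be purely bookkeeping: making precise which products of $\gamma_lL_{l,n}$ appear and confirming that none of the spurious factors depend on $b_i$ or $b_j$ in a way that would flip a sign — i.e. isolating exactly the $b_i$- and $b_j$-dependence of $\gamma_iL_{i,n}$ so that the sign of the first derivative in $b_i$ and the sign of the mixed derivative combine correctly. Once that separation is pinned down, both assertions are immediate from Lemma \ref{rl}, Corollary \ref{uk}, and Proposition \ref{wij}.
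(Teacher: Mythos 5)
The monotonicity in $b_i$ is handled correctly and matches the paper: since $\gamma_iL_{i,n}=\gamma_i/(m_1+\gamma_iR^n_{i+1})$ and $R^n_{i+1}$ does not depend on $\gamma_i$, the formula $\frac{d(\gamma_iL_{i,n})}{d\gamma_i}=m_1/(m_1+\gamma_iR^n_{i+1})^2>0$ together with $\frac{d\gamma_i}{db_i}<0$ settles it. The problem is the $b_j$ direction, where both of your routes have a genuine gap. Route~(a) is circular: Corollary~\ref{uk} is stated only for $k>1$ (whereas $R^n_{i+1}=R^n_{i+1,1}$ has $k=1$), and, more seriously, its proof in the Appendix explicitly invokes Corollary~\ref{pushc}. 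The monotonicity of $R^n_{i+1}$ in $\gamma_j$ that you want to cite is exactly a reformulation of the statement you are trying to prove, via $\gamma_iL_{i,n}=\gamma_i/(m_1+\gamma_iR^n_{i+1})$; it is not independently available.

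Route~(b) has a sign error and a non sequitur. Writing $\ln(\gamma_iL_{i,n})=\ln\gamma_i+\ln|W^{i+1,n}|-\ln|W^{i,n}|$, the mixed partial is $\frac{\partial^2\ln(\gamma_iL_{i,n})}{\partial b_i\partial b_j}=-\frac{\partial^2\ln|W^{i,n}|}{\partial b_i\partial b_j}$, which by Proposition~\ref{wij} is \emph{negative}, not positive. And in any case, knowing the sign of the mixed second derivative, plus the sign of $\frac{\partial}{\partial b_i}\ln(\gamma_iL_{i,n})$, does not determine the sign of $\frac{\partial}{\partial b_j}\ln(\gamma_iL_{i,n})$: ``$\partial_{b_i}f$ is monotone in $b_j$'' says nothing about $\partial_{b_j}f$ without a boundary value. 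This boundary value is precisely the missing ingredient. The paper fixes $b_1=c\in(0,1)$, observes that at $b_1=1$ (i.e.\ $\gamma_1=0$) one has $|W^n|=m_1|W^{2,n}|$ and hence $\frac{\partial\ln|W^{2,n}|}{\partial b_j}-\frac{\partial\ln|W^n|}{\partial b_j}=0$, and then writes
\[
\frac{\partial\ln L_{1,n}}{\partial b_j}\Big|_{b_1=c}
=\frac{\partial\ln|W^{2,n}|}{\partial b_j}-\frac{\partial\ln|W^n|}{\partial b_j}\Big|_{b_1=c}
=\int_c^1\frac{\partial^2\ln|W^n|}{\partial b_1\partial b_j}\,db_1>0
\]
by Proposition~\ref{wij}, after noting via (\ref{rjlj}) that the sign of $\frac{\partial(\gamma_1L_{1,n})}{\partial b_j}$ does not depend on $b_1$. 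It is this integral-from-a-vanishing-boundary-value argument, not bookkeeping, that converts the second-derivative information of Proposition~\ref{wij} into a first-derivative sign; your proposal would need to be repaired to include it.
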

\begin{proof}
We shall only consider $\gamma_1L_1.$ The strict monotonicity in $b_1$ stems from (\ref{rjlj}). Take $j>1$. By (\ref{rjlj}), the monotonicity of $\gamma_1L_1$ in $b_j$ does not depend on $b_1$. For convenience let $b_1=c\in(0,1)$. Then we can study $L_1$ instead. Note that
\begin{align*}\frac{\partial L_1}{\partial b_j}=\lim_{n\to\infty}\frac{\partial L_{1,n}}{\partial b_j}&=\lim_{n\to\infty}\frac{|W^{2,n}|}{|W^{n}|}\Big(\frac{\partial |W^{2,n}|}{\partial b_j}/|W^{2,n} |-\frac{\partial |W^{n}|}{\partial b_j}/|W^n|\Big)&\\
&=L_1\lim_{n\to\infty}\Big(\frac{\partial |W^{2,n}|}{\partial b_j}/|W^{2,n} |-\frac{\partial |W^{n}|}{\partial b_j}/|W^n|\Big).&\end{align*}
Notice that the following holds when $b_1=1$, 
$$\frac{\partial |W^{n}|}{\partial b_j}/|W^n|=\frac{\partial |W^{2,n}|}{\partial b_j}/|W^{2,n} |.$$ 
Then by Proposition \ref{wij}
\begin{align*}
\lim_{n\to\infty}\Big(\frac{\partial |W^{2,n}|}{\partial b_j}/|W^{2,n} |-\frac{\partial |W^{n}|}{\partial b_j}/|W^n|\Big)&=\lim_{n\to\infty}\int_{c}^1 \frac{\partial }{\partial b_1}\Big(\frac{\partial |W^n|}{\partial b_j}/|W^n|\Big)db_1&\\
&=\lim_{n\to\infty}\int_c^1\frac{\partial^2 \ln |W^n|}{\partial b_1\partial b_j}db_1>0.&\end{align*}
Then we obtain $\frac{\partial L_1}{\partial b_j}>0.$
%If $\gamma_j=\infty$ (i.e., $b_j=0$), then using Proposition \ref{wij} and  (\ref{rjlj})
%\begin{align*}
%\frac{\partial L_1}{\partial b_j}\Big |_{b_j=0}=\lim_{b_j\to0}\frac{\partial L_1}{\partial b_j}&=\lim_{b_j\to 0}\lim_{n\to\infty}L_1\int_c^1\frac{\partial^2 \ln |W^n|}{\partial b_1\partial b_j}db_1&\\
%&=L_1\lim_{n\to\infty}\int_c^1\left(\frac{\partial^2 \ln |W^n|}{\partial b_1\partial b_j}\Big |%_{b_j=0}\right)db_1>0.&\end{align*}
\end{proof}

\begin{corollary}\label{uk}
For any $k>1,$ both $R_{1,k}^n$ and $R_{1,k}$ strictly decrease in $b_j$, for any $j\geq 1.$
\end{corollary}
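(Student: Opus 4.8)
The plan is to reduce Corollary \ref{uk} to the monotonicity statements already established, mirroring the proof of Corollary \ref{pushc} but now tracking a row-upgraded determinant. Recall from the proof of Corollary \ref{ukn} the identity $R^{n-1}_{1,k}=|U_k^rW^{n-1}|/|W^{n-1}|$ and the recursion $R^n_{1,k}=(m_{k+1}+\gamma_1R^n_{2,k+1})/(m_1+\gamma_1R^n_{2})$ from (\ref{rik}). Write $f=|U_k^rW^n|$ and $g=|W^n|$ as in Lemma \ref{mono}. The first step is to differentiate $R^n_{1,k}=f/g$ with respect to $b_j$ for $j\geq 2$ and express the result in the form
\begin{equation*}
\frac{\partial R^n_{1,k}}{\partial b_j}=R^n_{1,k}\left(\frac{\partial f/\partial b_j}{f}-\frac{\partial g/\partial b_j}{g}\right),
\end{equation*}
so everything is controlled once we understand the logarithmic derivatives of $f$ and $g$ in $b_j$.

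Next I would handle the two regimes. For $j\geq 2$: the key is that $U_k^rW^n$ and $W^n$ differ only in the first row, and $b_j$ with $j\geq 2$ enters both matrices through the single entry $-\gamma_j$ in row $j$. So the idea is to cofactor-expand $|U_k^rW^n|$ and $|W^n|$ along row $j$ (or equivalently use the analogue of (\ref{wgamma}), splitting off the block-diagonal minor $|W^{1,j-1}|$ times $|W^{j+1,n}|$ for the pure $W$ determinant and the corresponding $|U_k^rW^{1,j-1}|\,|W^{j+1,n}|$ for the upgraded one). This expresses $\partial f/\partial b_j / f$ and $\partial g/\partial b_j / g$ in terms of ratios of determinants of the truncated matrices $W^{j+1,n}$, $W^{1,j-1}$ and $U_k^rW^{1,j-1}$, and the difference of logarithmic derivatives should collapse, exactly as in Corollary \ref{pushc}, to a difference of products of $\gamma_\ell L_{\ell,\cdot}$ factors — with the sign forced by the strict monotonicity of $\gamma_\ell L_{\ell,n}$ in $n$ (Lemma \ref{rl}). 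For $j=1$: here $b_1$ sits in $\gamma_1$, which appears only in the recursion (\ref{rik}); since $R^n_{1,k}=(m_{k+1}+\gamma_1R^n_{2,k+1})/(m_1+\gamma_1R^n_{2})$ and Hölder's inequality (\ref{holder}) together with $R^n_{2,k+1}>m_{k+1}/m_1$ (from the proof of Theorem \ref{randet}) give $m_{k+1}/R^n_{2,k+1}<m_1/R^n_2$, a direct differentiation in $\gamma_1$ (then using $d\gamma_1/db_1=-1/b_1^2<0$) shows $\partial R^n_{1,k}/\partial b_1<0$; this is essentially already recorded in Lemma \ref{mono} and the computation $\partial^2 R_{1,k}/\partial b_1^2<0$ in the proof of Theorem \ref{randet}, whose first-derivative sign is the relevant fact. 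Finally, letting $n\to\infty$ and invoking Corollary \ref{ukn} (so $R^n_{1,k}\to R_{1,k}$) together with the uniform sign of the derivatives transfers strict monotonicity to the limit $R_{1,k}$.

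The main obstacle I anticipate is the bookkeeping in the $j\geq 2$ case: one must carefully verify that upgrading the last-column indices by $k$ does not spoil the cancellation that produced the clean product formula in Corollary \ref{pushc}, i.e.\ that the block-expansion identity of the form (\ref{wgamma}) still holds with $U_k^r$ inserted and that the factor $|W^{j+1,n}|$ (which is the part of the matrix \emph{not} touched by $U_k^r$, since $k>1$ upgrades only the first row and last column while $j\geq 2$) factors out identically from $f$ and $g$. Provided that factorisation goes through, the difference of logarithmic derivatives is exactly the same expression as in Corollary \ref{pushc} up to replacing $|W^{1,j-1}|$ by $|U_k^rW^{1,j-1}|$ in one place, and the sign analysis is then immediate from Lemma \ref{rl}. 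A secondary point is to make sure the argument is valid (by the usual limiting convention) when some $\gamma_i=\infty$, but this is routine given the conventions already adopted in Section \ref{pre}.
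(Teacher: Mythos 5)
Your route is genuinely different from the paper's. The paper does not differentiate $R^n_{1,k}=f/g$ at all: it shifts indices (proving WLOG that $R_{k+1,k}$ decreases in $b_m$ for $m\geq k+1$), performs a partial expansion of $|W^n|/|W^n|$ along the first $k$ entries of the first row to get the identity $1=(\prod_{j=0}^{k-1}\gamma_{1+j}L_{1+j})R_{k+1,k}+\sum_{i=1}^{k-1}\Phi_{1,i}$, and then reads off the sign from Corollary \ref{pushc} (every factor other than $R_{k+1,k}$ is strictly increasing in $b_m$, and the sum is constant). Your block-expansion alternative for $j\geq 2$ does work: since $U_k^r$ touches only the first row, setting $\gamma_j=0$ gives $f_0=|U_k^rW^{1,j-1}|\,|W^{j+1,n}|$ and $g_0=|W^{1,j-1}|\,|W^{j+1,n}|$, and the difference of logarithmic derivatives collapses to $\gamma_j^{-1}\bigl(g_0/g-f_0/f\bigr)$, whose sign is governed by $R^n_{1,k}$ versus $R^{j-1}_{1,k}$ — i.e.\ by the strict monotonicity of $R^m_{1,k}$ in $m$ from Lemma \ref{rl}/Corollary \ref{ukn}, not literally by products of $\gamma_\ell L_\ell$ as you state, but the needed ingredient is available and non-circular. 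This is arguably a cleaner reduction than the paper's.

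The genuine gap is your $j=1$ case: it is circular as written. The inequality $m_{k+1}/R_{2,k+1}<m_1/R_2$ is obtained in the proof of Theorem \ref{randet} \emph{from} Corollary \ref{uk} (``By Corollary \ref{uk}\dots $R_{1,k}$ is strictly increasing in $\gamma_1$, then $R_{1,k}>m_{k+1}/m_1$, implying\dots''), and the first-derivative sign $f'g-fg'>0$ in Lemma \ref{mono} is likewise justified there by citing Corollary \ref{uk}. Neither can be an input to the present proof. You must either prove $R^n_{2,k+1}/R^n_2>m_{k+1}/m_1$ independently (this can be done from the type-$(*)$ expansion of Lemma \ref{type*} together with H\"older's inequality (\ref{holder}), comparing the determinants term by term), or — more economically — observe that your own $j\geq 2$ computation extends verbatim to $j=1$ using the convention $W^{1,0}=(m_1)$, $U_k^rW^{1,0}=(m_{k+1})$, so that the required inequality becomes $R^n_{1,k}>m_{k+1}/m_1=R^0_{1,k}$, the base case of the monotonicity in $n$. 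Either repair closes the argument; without one of them the $j=1$ case is unproven.
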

\begin{proof}We shall prove only for $R_{1,k}$. 
Without loss of generality, we show that $R_{k+1,k}$ strictly decreases in $b_m, m\geq k+1.$ 
Take $\frac{|W^n|}{|W^n|}$ and expand the top $W^{n}$ for the first $k$ elements on the first row. A similar approach was used in obtaining (\ref{aanx}) where the expansion was made on the whole first row. Letting $n$ go to infinity we obtain the following, with detailed steps omitted
\begin{equation}\label{finitedev}1=(\prod_{j=0}^{k-1}\gamma_{1+j}L_{1+j})R_{k+1,k}+ \sum_{i=1}^{k-1}\Phi_{1,i}.\end{equation}
Taking derivative on $b_m$ on both sides, and using Corollary \ref{pushc}, the derivative of $R_{k+1,k}$ on $b_m$ is strictly negative for $b_m\in (0,1)$. 
\end{proof}

\subsection{Appendix D}

We introduce below a new notation for the special structure of matrix $W^n$. 
\begin{definition}\label{mij}
Assume $M$ is a square matrix of size $n$. For any $1\leq i\leq j\leq n,$ let $M(i,j)$ be the square matrix with $M_{i,i},M_{i,j},M_{j,i},M_{j,j}$ as the 4 corner elements. We say $M$ is of type $(*)$ if the following holds: $M_{i,j}>0$ if $i\leq j$; $M_{i,j}<0$ if $i=1+j$; 
$M_{i,j}=0$ if $i>1+j$. 
\end{definition}
By definition, $W^n$ is of type $(*)$. To compute the determinant of a matrix of type $(*)$, we need some more notations. Define
$$\EE_k^n:=\{e=(e_1,\cdots,e_k): 1=e_1<e_2<\cdots<e_k=n+1\}, \quad \forall \,2\leq k\leq n+1.$$
So $\EE_k^n$ consists of all sequences of length $k$ increasing from $1$ to $n+1$. 
Let $$\EE^n:=\cup_{2\leq k\leq n+1} \EE_k^n. $$
For $M$ of type $(*)$ and size $n$, define 
$$d(M):=M_{1,n}\prod_{i=2}^n|M_{i,i-1} |; \quad d_{M}(e):=\prod_{i=1}^{k-1}d(M(e_i,e_{i+1}-1)),\quad  \forall e\in\EE_k, \,\,2\leq k\leq n+1.$$
Let $s_n$ be the set of permutations of $\{1,2,\cdots,n\}.$
\begin{lemma}\label{type*}
For any matrix $M$ of type $(*)$ and of size $n$,
\begin{equation}\label{M||} |M |=\sum_{e\in \EE^n}d_M(e).\end{equation}
\end{lemma}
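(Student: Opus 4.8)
\textbf{Proof proposal for Lemma \ref{type*}.}

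The plan is to prove the determinant formula \eqref{M||} by induction on the size $n$, expanding along the first column of $M$. For $n=1$ the set $\EE^1$ consists of the single sequence $e=(1,2)$, and $d_M(e)=d(M(1,1))=M_{1,1}$, so $|M|=M_{1,1}$ and the formula holds. For the inductive step, I would use the fact that $M$ is of type $(*)$: the first column has $M_{1,1}>0$, $M_{2,1}<0$, and $M_{i,1}=0$ for $i>2$. Hence a Laplace expansion along the first column has only two terms,
\begin{equation*}
|M| = M_{1,1}\cdot|M^{(1,1)}| - M_{2,1}\cdot|M^{(2,1)}|,
\end{equation*}
where $M^{(1,1)}$ is the $(n-1)\times(n-1)$ submatrix obtained by deleting row $1$ and column $1$, and $M^{(2,1)}$ is obtained by deleting row $2$ and column $1$. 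The matrix $M^{(1,1)}$ is again of type $(*)$, so the inductive hypothesis applies to it directly. The matrix $M^{(2,1)}$ is not of type $(*)$, but it is block-lower-triangular after a further expansion along its own first column (whose only nonzero entry sits in the top-left position, equal to $M_{1,2}$), so $|M^{(2,1)}| = M_{1,2}\cdot|N|$ where $N$ is the submatrix of $M$ with rows $\{3,\dots,n\}$ and columns $\{3,\dots,n\}$, again of type $(*)$, to which the inductive hypothesis applies.

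With these two pieces in hand, I would reorganise the resulting sum over $\EE^{n-1}$-type sequences into a sum over $\EE^n$. The key combinatorial observation is that every $e=(e_1,\dots,e_k)\in\EE^n$ is determined by whether its second coordinate $e_2$ equals $2$ or is larger. If $e_2>2$, then $e$ arises from the $M_{1,1}|M^{(1,1)}|$ term: deleting the first row and column shifts indices, and $d_M(e)$ factors as $M_{1,1}\cdot\prod_{i=2}^n|M_{i,i-1}|\cdot(\text{tail})$ corresponding to a sequence in $\EE^{n-1}$ for the shifted matrix — here one checks $d(M(1,e_2-1)) = M_{1,e_2-1}\prod_{i=2}^{e_2-1}|M_{i,i-1}|$ contains the factor $M_{1,1}|M_{2,1}|$ precisely when... — actually the cleanest bookkeeping is: sequences with $e_2=2$ contribute $d(M(1,1))=M_{1,1}$ times $\sum_{e'\in\EE^{n-1}}d_{M^{(1,1)}}(e')$, matching $M_{1,1}|M^{(1,1)}|$; sequences with $e_2=2$ versus $e_2>2$ split the sum, and the $e_2>2$ part is matched against the $-M_{2,1}M_{1,2}|N|$ term after noting $d(M(1,e_2-1))$ with $e_2\ge 3$ always contains the factor $M_{1,e_2-1}|M_{2,1}|$ and that $|M_{2,1}| = -M_{2,1}$. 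I would make this correspondence a bijection $\EE^n \leftrightarrow \{e_2=2\}\sqcup\{e_2\ge 3\}$ and verify term by term that $d_M$ transports correctly under the index shifts.

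The main obstacle I anticipate is getting the index-shifting bookkeeping exactly right in the $e_2\ge 3$ branch: one must check that $d(M(1,j))$ for $j=e_2-1\ge 2$ equals $M_{1,j}\cdot|M_{2,1}|\cdot\prod_{i=3}^{j}|M_{i,i-1}|$, so that after pulling out the shared factor $M_{1,2}$ (when $j=2$) or more generally reconstructing $d(M(1,j))$ from the contribution of the $M^{(2,1)}$-expansion, the products $d(M(e_i,e_{i+1}-1))$ for $i\ge 2$ reassemble without loss. Equivalently, one can avoid column-index subtleties by instead doing the induction via the recursion $|W^{j,n}|=m_1|W^{j+1,n}|+\gamma_j|U^rW^{j+1,n}|$ already established in \eqref{laa}, but since $U^rW^{j+1,n}$ is not of type $(*)$ this reintroduces the same difficulty; the first-column Laplace expansion is the most transparent route. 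Once the bijection is pinned down the remaining verification is a routine check that both sides agree summand by summand.
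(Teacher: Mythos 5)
Your plan of expanding $|M|$ along the first column (which has exactly two nonzero entries, $M_{1,1}>0$ and $M_{2,1}<0$) and then invoking the inductive hypothesis is viable and mirrors the paper's last-row expansion, but your analysis of the second cofactor $M^{(2,1)}$ is wrong. That matrix, formed from rows $\{1,3,\dots,n\}$ and columns $\{2,\dots,n\}$ of $M$, \emph{is} of type $(*)$: its $(1,j)$ entry is $M_{1,j+1}>0$, its $(i,j)$ entry for $i\ge 2$ is $M_{i+1,j+1}$, and the sign pattern of Definition~\ref{mij} is checked directly. In particular $M^{(2,1)}_{2,1}=M_{3,2}<0$, so the first column of $M^{(2,1)}$ is $(M_{1,2},M_{3,2},0,\dots,0)$ with \emph{two} nonzero entries, and your claimed reduction $|M^{(2,1)}|=M_{1,2}|N|$ silently drops the $M_{3,2}$ contribution. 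Concretely, for $n=3$ with
\begin{equation*}
M=\begin{pmatrix}a&b&c\\-d&e&f\\0&-g&h\end{pmatrix},\qquad a,\dots,h>0,
\end{equation*}
one has $|M^{(2,1)}|=bh+cg$ while $M_{1,2}|N|=bh$; feeding your formula into $|M|=M_{1,1}|M^{(1,1)}|+|M_{2,1}|\,|M^{(2,1)}|$ gives $aeh+afg+bdh$, omitting the correct term $cdg=d_M\bigl((1,4)\bigr)$.

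The repair is simpler than what you attempted: since $M^{(2,1)}$ is itself of type $(*)$, apply the inductive hypothesis to it directly and then do the bookkeeping. The only subtlety is that, because row $2$ of $M$ is absent from $M^{(2,1)}$, the first block satisfies $d\bigl(M^{(2,1)}(1,e''_2-1)\bigr)=d\bigl(M(1,e''_2)\bigr)/|M_{2,1}|$, while blocks not touching the first row transport unchanged; the missing factor $|M_{2,1}|$ is exactly the Laplace cofactor multiplying $|M^{(2,1)}|$. Hence $|M_{2,1}|\,d_{M^{(2,1)}}(e'')=d_M(e)$ with $e=(1,e''_2+1,\dots,e''_k+1)$, which ranges over all $e\in\EE^n$ with $e_2\ge 3$, while $M_{1,1}|M^{(1,1)}|$ accounts for the $e_2=2$ sequences as you noted. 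This is the mirror image of the paper's last-row expansion, where both cofactors are again of type $(*)$. (Your aside that $U^rW^{j+1,n}$ fails to be of type $(*)$ is likewise incorrect --- shifting the first-row indices preserves positivity above the diagonal --- but that route is not needed here.)
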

\begin{proof}
By decomposing $M$ along the last row, we can prove it by induction. Details are omitted. 
\end{proof}
\begin{remark}
Leibniz formula says that $|M|=\sum_{\sigma\in s_n}sgn(\sigma)\prod_{j=1}^nM_{j,\sigma(j)}$.  It is easy to see that the set $\{\sigma: \sigma\in s_n, \prod_{j=1}^nM_{j,\sigma(j)}\neq 0\}$ is in one-to-one correspondence to $\EE^n$. Moreover $sgn(\sigma)=1$ for any $\sigma$ in the former set. If we use $\sigma^e$ to denote the corresponding element in $s_n$ of an $e\in \EE_k,$ 
$$\prod_{j=1}^{k-1}d(M(e_j,e_{j+1}-1))=\prod_{j=1}^nM_{j,\sigma^e(j)}>0.$$
In other words, (\ref{M||}) is another writing of Leibniz formula. 
\end{remark}
We admit the following corollary with proof omitted. 
\begin{corollary}\label{ee}
$$|M(1,j)||M(j+1,n)|=\sum_{e\in\EE^{n+1}, \,\,j+1\in e}d_M(e).$$
\end{corollary}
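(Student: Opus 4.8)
The plan is to read off the identity directly from Lemma~\ref{type*} by a concatenation bijection on the index sets. View $M(1,j)$ as a $j\times j$ matrix of type $(*)$, and view $M(j+1,n)$, after relabelling its rows and columns $j+1,\dots,n$ as $1,\dots,n-j$, as an $(n-j)\times(n-j)$ matrix of type $(*)$ (this self-similarity is immediate from Definition~\ref{mij}). A contiguous diagonal sub-block of either of these is literally a block $M(a,b)$ of the original $M$, so applying Lemma~\ref{type*} to the two factors gives
$$|M(1,j)|=\sum_{e}d_M(e),\qquad |M(j+1,n)|=\sum_{g}d_M(g),$$
where $e$ ranges over the sequences $1=e_1<\cdots<e_k=j+1$, $g$ ranges over the sequences $j+1=g_1<\cdots<g_l=n+1$, and $d_M(e)=\prod_{i=1}^{k-1}d(M(e_i,e_{i+1}-1))$, similarly for $g$; here one uses that a diagonal block of the relabelled $M(j+1,n)$ at internal positions $f_i,\dots,f_{i+1}-1$ is exactly $M(g_i,g_{i+1}-1)$ with $g_i=j+f_i$.

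Next I would check that concatenation is multiplicative on the quantities $d_M(\cdot)$. Given $e$ and $g$ as above, since $e_k=j+1=g_1$ we may form $h:=(e_1,\dots,e_k,g_2,\dots,g_l)$, a strictly increasing sequence from $1$ to $n+1$ with $j+1\in h$, hence an element of $\EE^n$ containing $j+1$. The consecutive pairs of $h$ are precisely the consecutive pairs of $e$ followed by those of $g$, so the product defining $d_M(h)$ splits and $d_M(h)=d_M(e)\,d_M(g)$. Conversely, any $h\in\EE^n$ with $j+1\in h$ breaks uniquely at its entry equal to $j+1$ into such a pair $(e,g)$. Thus $(e,g)\mapsto h$ is a bijection onto $\{h\in\EE^n:\ j+1\in h\}$, and multiplying the two displayed sums termwise yields
$$|M(1,j)|\,|M(j+1,n)|=\sum_{e,g}d_M(e)\,d_M(g)=\sum_{\substack{h\in\EE^n\\ j+1\in h}}d_M(h),$$
which is the assertion (with $\EE^n$ the index set attached to an $n\times n$ matrix, as in Lemma~\ref{type*}). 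The degenerate case $j=n$, where $M(n+1,n)$ is the empty matrix with determinant $1$ and every $e\in\EE^n$ already ends at $n+1$, reduces both sides to $|M|=\sum_{e\in\EE^n}d_M(e)$; the case $j=1$ is immediate.

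I do not anticipate a genuine obstacle: everything rests on the self-similar block structure of type $(*)$ matrices already isolated in Lemma~\ref{type*}, and the only points needing care are the index shift by $j$ in the second block and the verification that $d$ and $d_M(\cdot)$ compose under concatenation. An equivalent route, if one prefers, is via the Remark following Lemma~\ref{type*}: identifying $\EE^n$ with the nonvanishing terms of the Leibniz expansion of $|M|$, the permutation $\sigma^e$ stabilises $\{1,\dots,j\}$ exactly when $j+1\in e$, and on those $e$ the term $\prod_a M_{a,\sigma^e(a)}$ is the product of a nonvanishing Leibniz term of $|M(1,j)|$ and one of $|M(j+1,n)|$, which yields the same correspondence.
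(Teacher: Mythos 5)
The paper explicitly omits a proof of this corollary (``We admit the following corollary with proof omitted''), so there is no reference argument to compare yours against; your proof is correct and is presumably the one the reader is expected to supply. The concatenation bijection between pairs $(e,g)$ --- with $e$ ending and $g$ beginning at $j+1$, and $g$ obtained from the relabelled block via $g_i=j+f_i$ --- and sequences $h\in\EE^n$ with $j+1\in h$, together with the multiplicativity $d_M(h)=d_M(e)\,d_M(g)$ which is immediate from the product defining $d_M(\cdot)$, is exactly what the self-similar type-$(*)$ structure in Lemma~\ref{type*} is set up to deliver, and both routes you sketch (direct from Lemma~\ref{type*}, or via the Leibniz/permutation reading in the Remark, where $j+1\in e$ is equivalent to $\sigma^e$ stabilising $\{1,\dots,j\}$) are sound. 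You were also right to flag the indexing wrinkle: as printed the right-hand side sums over $\EE^{n+1}$ while the left-hand side has $M(j+1,n)$, which is inconsistent unless either $M$ has size $n$ and $\EE^{n+1}$ is a typo for $\EE^n$ (your reading), or $M$ has size $n+1$ and the second factor should be $M(j+1,n+1)$; the latter is how the corollary is actually invoked in the proof of Lemma~\ref{aga}, with $M=W^n$ of size $n+1$, $M(1,j)=W^{j-1}$ and $M(j+1,n+1)=W^{j+1,n}$. Either repair makes the statement match your argument, which goes through unchanged.
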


\begin{lemma}\label{aga}
For any $1\leq j\leq n$ and $\gamma_j\in(0,\infty)$,  
$$\frac{d|W^n|}{d\gamma_j}/|W^n|\in(b_j,\frac{1}{\gamma_j}).$$%; \quad \lim_{n\to\infty}\frac{d|W^n|}{d\gamma_i}/|W^n|\in (b_i, \frac{1}{\gamma_i}).$$
\end{lemma}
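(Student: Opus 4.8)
The plan is to establish the two-sided bound $\frac{d|W^n|}{d\gamma_j}/|W^n|\in(b_j,1/\gamma_j)$ by exploiting the block structure revealed when one expands the determinant $|W^n|$ in the single entry $-\gamma_j$ sitting in position $(j+1,j)$. Since $W^n$ is linear in each $\gamma_j$ (the entry $-\gamma_j$ appears in exactly one place), differentiating in $\gamma_j$ and using the cofactor expansion along that column gives, as already derived in the proof of Proposition \ref{wij},
\begin{equation}\label{planeq}|W^n|=\gamma_j\frac{d|W^n|}{d\gamma_j}+|W^{j-1}|\,|W^{j+1,n}|,\end{equation}
where $|W^0|$ and the degenerate blocks are read off from the conventions $W^{m,n}=(1)$ for $m>n+1$, $W^{n+1,n}=(m_1)$. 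Dividing \eqref{planeq} by $|W^n|$, the claim $\frac{d|W^n|}{d\gamma_j}/|W^n|<1/\gamma_j$ is equivalent to positivity of $|W^{j-1}|\,|W^{j+1,n}|/|W^n|$, which is immediate since each of these determinants is strictly positive (all the relevant $\gamma$'s being finite, as noted after \eqref{lr}).

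For the lower bound, rewriting \eqref{planeq} shows $\frac{d|W^n|}{d\gamma_j}/|W^n|>b_j$ is equivalent to
\[|W^{j-1}|\,|W^{j+1,n}|/|W^n|<1-b_j=\gamma_j b_j,\]
i.e. to $\gamma_j^{-1}|W^{j-1}|\,|W^{j+1,n}|<b_j|W^n|$, equivalently $|W^{j-1}|\,|W^{j+1,n}|<(1-b_j)|W^n|$. Here I would use the combinatorial formula for determinants of type-$(*)$ matrices from Lemma \ref{type*}: $|W^n|=\sum_{e\in\EE^n}d_W(e)$, and by Corollary \ref{ee}, $|W^{j-1}|\,|W^{j+1,n}|=|W(1,j)|\,|W(j+1,n)|=\sum_{e\in\EE^{n+1},\,j+1\in e}d_W(e)$ — i.e. it is the partial sum of $|W^{n}|$'s expansion over those paths $e$ that pass through the node $j+1$. (One must be slightly careful to align indices: the relevant decomposition is of the size-$n$ matrix $W^n$ along the split after row $j$, so I would invoke Corollary \ref{ee} with the appropriate shift.) Thus the ratio $|W^{j-1}|\,|W^{j+1,n}|/|W^n|$ is exactly the "probability" that a random path $e$ (weighted by $d_W(e)$) hits $j+1$. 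It then suffices to show this ratio is $<1-b_j$, and the natural route is to exhibit enough path-weight avoiding $j+1$: paths that jump from some $e_i\le j$ directly to some $e_{i+1}\ge j+2$ contribute $d(W(e_i,e_{i+1}-1))$, a product whose relevant factor includes the entry $-\gamma_j$ (or rather $|{-\gamma_j}|=\gamma_j$), and comparing these with the paths that stop at $j+1$ produces the factor $1/\gamma_j$ versus $m_1$, hence the gap measured by $b_j$.

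Alternatively — and this is probably the cleaner write-up — I would avoid the combinatorics entirely and argue probabilistically/recursively via the quantities already in hand. From \eqref{laa} we have $L_{j,n}=1/(m_1+\gamma_j R^n_{j+1})$, so $\gamma_j\frac{d|W^n|}{d\gamma_j}/|W^n| = 1-|W^{j-1}||W^{j+1,n}|/|W^n|$; and one checks directly that $|W^{j+1,n}|/|W^{j,n}| = L_{j,n}$ while $|W^{j,n}|/|W^n|=\prod_{i=1}^{j-1}L_{i,n}\cdot(\gamma_1\cdots\gamma_{j-1})^{-1}\cdot|W^{j-1}|^{-1}\cdot\ldots$ — tracking these ratios, the quantity $|W^{j-1}||W^{j+1,n}|/|W^n|$ telescopes to $\gamma_j L_{j,n}$ itself. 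Then the desired statement $\gamma_j L_{j,n}\in(\gamma_j/(m_1+\gamma_j),\,\gamma_j/(m_1(1+\gamma_j)))$ — equivalently, after substituting, $|W^{j-1}||W^{j+1,n}|/|W^n|\in(\,\cdot\,)$ — follows from the bounds \eqref{lbd} (i.e. \eqref{boundrjlj}) already proved in Lemma \ref{rl}. Concretely: $\gamma_j\frac{d|W^n|}{d\gamma_j}/|W^n|=1-\gamma_j L_{j,n}$, and $\gamma_jL_{j,n}<\gamma_j/(m_1(1+\gamma_j))$ gives the lower bound $>b_j$ after simplifying $1-\gamma_j/(m_1(1+\gamma_j))$ against $\gamma_j b_j$ using $m_1<1$, while $\gamma_jL_{j,n}>\gamma_j/(m_1+\gamma_j)>0$ gives the upper bound $<1/\gamma_j$.

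\textbf{Main obstacle.} The crux is the identification of $|W^{j-1}||W^{j+1,n}|/|W^n|$ with $\gamma_j L_{j,n}$ (equivalently, that $1-\gamma_j\frac{d|W^n|}{d\gamma_j}/|W^n|$ equals exactly that normalized determinant ratio and simplifies to a single $\gamma_j L_{j,n}$ rather than a messy product). This requires correctly bookkeeping the cascade of $L$-ratios and the $\gamma$-normalizations in the type-$(*)$ structure — getting an index shift wrong here is the easy way to break the argument. Once that identity is in place, everything reduces to the already-established bounds $\frac{\gamma_j}{m_1+\gamma_j}<\gamma_jL_{j,n}<\frac{\gamma_j}{m_1(1+\gamma_j)}$ from Lemma \ref{rl} together with $0<m_1<1$, and the two inequalities drop out by elementary manipulation.
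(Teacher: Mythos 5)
Your first step — the identity $|W^n|=\gamma_j\frac{d|W^n|}{d\gamma_j}+|W^{j-1}|\,|W^{j+1,n}|$ (the paper's (\ref{wgamma})) and the deduction of the upper bound $\frac{d|W^n|}{d\gamma_j}/|W^n|<1/\gamma_j$ from positivity of the block determinants — is exactly the paper's argument and is correct. The lower bound, however, is not established, and both of your routes fail. The reduction itself is mis-stated: dividing the identity by $|W^n|$ and using $\gamma_jb_j=1-b_j$, the inequality $\frac{d|W^n|}{d\gamma_j}/|W^n|>b_j$ is equivalent to $|W^{j-1}|\,|W^{j+1,n}|/|W^n|<b_j$, equivalently to $|W^{j-1}|\,|W^{j+1,n}|<\frac{d|W^n|}{d\gamma_j}$ — not to $|W^{j-1}|\,|W^{j+1,n}|/|W^n|<1-b_j$ as you write, which for small $b_j$ is strictly weaker than what is needed. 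Your preferred "cleaner" route then hinges on the identity $|W^{j-1}|\,|W^{j+1,n}|/|W^n|=\gamma_jL_{j,n}$, which is false: since $|W^{j,n}|/|W^n|=\prod_{i=1}^{j-1}L_{i,n}$, the left side equals $|W^{j-1}|\bigl(\prod_{i=1}^{j-1}L_{i,n}\bigr)L_{j,n}$, and already for $n=j=1$ it is $m_1^2/(m_1^2+\gamma_1m_2)$ while $\gamma_1L_{1,1}=\gamma_1m_1/(m_1^2+\gamma_1m_2)$. Even granting the identity, the bounds of Lemma \ref{rl} cannot close the argument: one would need $\gamma_jL_{j,n}<b_j$, whereas (\ref{boundrjlj}) only gives $\gamma_jL_{j,n}<\gamma_j/(m_1(1+\gamma_j))=(1-b_j)/m_1$, which exceeds $1-b_j$ because $m_1<1$; worse, the lower bound $\gamma_jL_{j,n}>\gamma_j/(m_1+\gamma_j)$ is itself larger than $b_j$ whenever $\gamma_j>\sqrt{m_1}$, so no bound of this shape can possibly work.

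The actual content of the lemma is the inequality $|W^{j-1}|\,|W^{j+1,n}|<\frac{d|W^n|}{d\gamma_j}$, and this is where the type-$(*)$ combinatorics is unavoidable. The paper writes $|W^{j-1}|\,|W^{j+1,n}|$ as the sum of $d_{W^n}(e)$ over the paths $e$ passing through the node $j+1$ (Corollary \ref{ee}), writes $\frac{d|W^n|}{d\gamma_j}=|M|$ (with the row and column of $W^n$ containing $\gamma_j$ deleted) as the corresponding sum over a set $A'$ in explicit bijection with those paths, and bounds each ratio $d_{W^n}(e)/d_M(e')$ by $1$ via H\"older's inequality $m_{a}m_{b}<m_{a+b}$ from (\ref{holder}). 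Your first route gestures at comparing paths through $j+1$ with paths avoiding $j+1$ — a different, unquantified comparison aimed at the wrong threshold $1-b_j$. The term-by-term bijection against $|M|$, together with the H\"older step, is the missing idea.
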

\begin{proof}
By (\ref{wgamma}),  
$$\frac{d|W^n|}{d\gamma_j}/|W^n|=\gamma_j^{-1}\left(1-|W^{j-1} |\frac{|W^{j+1,n}|}{|W^n|}\right)=\gamma_j^{-1}\left(1-|W^{j-1} |\prod_{i=1}^jL_{i,n}\right).$$
Note that as long as $\gamma_j\neq \infty,$ we have $|W^{i-j-1} |\frac{|W^{j+1,n}|}{|W^n|}\in (0,1).$
Therefore 
$$\frac{d|W^n|}{d\gamma_j}/|W^n|<\gamma_j^{-1}.$$
%Similarly,
%\begin{equation}\label{laga}\lim_{n\to\infty}\frac{d|W^n|}{d\gamma_i}/|W^n|=\gamma_i^{-1}\left(1-|W^{i-1} |\prod_{j=1}^iL_j\right)<\gamma_i^{-1}.\end{equation}
To prove the strict lower bounds, using again (\ref{wgamma}), we just need to show  that
\begin{equation}\label{<1n}|W^{j-1}||W^{j+1, n} |/\frac{d|W^n|}{d\gamma_j}<1.\end{equation}
%and 
%\begin{equation}\label{<1}\lim_{n\to\infty}|W^{i-1}||W^{i+1, n} |/\frac{d|W^n|}{d\gamma_i}=\gamma_i(1-|W^{i-1} |\prod_{j=1}^iL_j)^{-1}-\gamma_i<1.\end{equation}
%=
%Apparently the matrix $\left(\begin{array}{cc}
%A^{1,i-1}&0\\
%0&A^{i+1, n}\\
%\end{array}\right)$ is not of type (*), but any of the two sub-matrices is. 
%To compute the determinant using (\ref{M||}), we have to require that $i+1\in l.$
Let $M$ be the matrix obtained by deleting the 
row and column of $W^n$ containing $\gamma_j$. Then 
$$|M|=\frac{d|W^n|}{d\gamma_j}.$$ 
The purpose is to compare $|W^{j-1}||W^{j+1,n}|$ and $|M|$. 
Denote 
$$A=\{e\in\EE^{n+1}:  j+1\in e\}.$$
Corollary \ref{ee} tells that
\begin{equation}\label{c9}|W^{j-1}||W^{j+1,n}|=\sum_{e\in A}d_{W^n}(e).\end{equation}
To compute $|M|$, we also seek to find an expression similar to the above display. Let $t(e)$ be the corresponding location such that $e_{t(e)}=j+1$ for any $e\in A$. Denote $$A'=\left\{e'\in \EE^{n}:  \exists e\in A, s.t.,
             \Bigg\langle\begin{array}{cc}
  e'_j=e_j, &  \mbox{if $i\leq t(e)-1$};\\  
             & \\  
e'_j=e_{j+1}-1, &    \mbox{if $j\geq t(e)\}$}.
             \end{array}  
. \right\}$$ There is a clear one-to-one correspondence between $A$ and $B$. It is easy to verify that 
$$|M|=\sum_{e'\in A'}d_{M}(e').$$ 
Consequently 
\begin{equation}\label{dd}|W^{j-1}||W^{j+1, n} |/|M|=\frac{\sum_{e\in A}d_{W^n}(e)}{\sum_{e'\in A'}d_{M}(e')}.\end{equation}
 Let $e\in A\cap \EE_k^{n+1}$ and $e'$ its corresponding element in $A'.$ Recalling the Definition \ref{mij},
\begin{align*}
d_{W^n}(e)&=d\Big(W^n(e_{t(e)-1},j)\Big)d\Big(W^n(j+1,e_{t(e)+1}-1)\Big)\prod_{i=1, i\notin\{ t(e)-1, t(e)\}  }^{k-1}d\Big(W^n(e_i,e_{i+1}-1)\Big)&\nonumber\\
&=\left(\prod_{i=e_{t(e)-1}, i\neq j }^{e_{t(e)+1}-2}\gamma_{i}\right)m_{j-e_{t(e)-1}+1}m_{e_{t(e)+1}-j-1}\prod_{i=1, i\notin\{ t(e)-1, t(e)\}  }^{k-1}d\Big(W^n(e_i,e_{i+1}-1)\Big)&
\end{align*}
and 
\begin{align*}&d_{M}(e')=\left(\prod_{i=e_{t(e)-1}, i\neq j }^{e_{t(e)+1}-2}\gamma_{i}\right)m_{e_{t(e)+1}-e_{t(e)-1}}\prod_{i=1, i\notin\{ t(e)-1, t(e)\}  }^{k-1}d\Big(W^n(e_i,e_{i+1}-1)\Big).&
\end{align*}
By H\"older's inequality (\ref{holder}), $$m_{j-e_{t(e)-1}+1}m_{e_{t(e)+1}-j-1}<m_{e_{t(e)+1}-e_{t(e)-1}}$$ Then \begin{equation}\label{d/d}\frac{d_{W^n}(e)}{d_M(e')}<1.\end{equation} So (\ref{<1n}) is proved. 

\end{proof}
\section{Acknowledgment}
The author thanks Takis Konstantopoulos, G\"otz Kersting and Pascal Grange for discussions. The author acknowledges the support of the National Natural Science Foundation of China (Youth Program, Grant:
11801458), and the XJTLU RDF-17-01-39.

\end{document}